\documentclass[a4paper,10pt,twoside]{article}

\usepackage{proof}

\input{dst.sty}

\begin{document}

\title{A functional interpretation for nonstandard arithmetic}
\author{Benno van den Berg\footnote{Mathematisch Instituut, Universiteit Utrecht,
PO. Box 80010, 3508 TA Utrecht. Email address: B.vandenBerg1@uu.nl. Supported by the Netherlands Organisation for Scientific Research (NWO).
}, Eyvind Briseid\footnote{
Department of Mathematics,
The University of Oslo,
Postboks 1053, Blindern,
0316 Oslo. Email address: eyvindbriseid@gmail.com. Supported by the Research Council of Norway (Project 204762/V30).}, and Pavol Safarik\footnote{Fachbereich Mathematik, Technische Universit\"{a}t Darmstadt,
Schlo{\ss}gartenstra{\ss}e 7, 64289 Darmstadt. Email address:  pavol.safarik@googlemail.com. Supported by the German Science Foundation (DFG project KO 1737/5-1).}}
\date{July 12, 2012}
\maketitle

\begin{abstract}
\noindent
We introduce constructive and classical systems for nonstandard arithmetic and show how variants of the functional interpretations due to G\"odel and Shoenfield can be used to rewrite proofs performed in these systems into standard ones. These functional interpretations show in particular that our nonstandard systems are conservative extensions of $\eha$ and $\epa$, strengthening earlier results by Moer\-dijk and Palmgren, and Avigad and Helzner. We will also indicate how our rewriting algorithm can be used for term extraction purposes. To conclude the paper, we will point out some open problems and directions for future research, including some initial results on saturation principles.
\end{abstract}

\tableofcontents

\section{Introduction}

In this paper we present functional interpretations for both constructive and classical systems of nonstandard arithmetic. The interpretations have two aspects: they show that the nonstandard systems are conservative over ordinary (standard) ones and they show how terms can be extracted from nonstandard proofs.

After inventing nonstandard analysis and showing that it was a versatile idea leading to nonstandard proofs in various areas of mathematics, Robinson suggested one could look at nonstandard arguments ``syntactically'' (proof-theoretically). From this point of view, one would see nonstandard analysis as ``introduc[ing] \emph{new deductive procedures} rather than new mathematical entities'' \cite{robinson96}. Apparently he did not think that adding these new deductive procedures to standard systems would make them more powerful or stronger in a proof-theoretic sense. In fact, in \cite{robinson73} he formulates as a general problem ``\emph{to devise a purely syntactic transformation which correlates standard and nonstandard proofs of the same theorems in a large area, e.g., complex function theory}''. In \cite{kreisel67} he is reported as asking a more specific question, whether a certain system for nonstandard arithmetic is conservative over $\PA$. These ideas of Robinson have borne fruit in the work of Kreisel \cite{kreisel67, kreisel69}, Friedman (unpublished), Nelson \cite{nelson77, nelson88}, Moerdijk, Palmgren \cite{moerdijk95, moerdijkpalmgren97, palmgren00}, Avigad \cite{avigadhelzner02, avigad05} and others, who proved for various systems for nonstandard analysis that they are conservative extensions of standard systems. Often their arguments are effective in that one could extract algorithms from their proofs which convert nonstandard arguments into standard ones. 

As an example of this, let us have a short look at the work of Nelson, also because it is a major source of inspiration for this article. The idea of Nelson was to add a new unary predicate symbol $\st$ to $\ZFC$ for ``being standard''. In addition, he added three new axioms to $\ZFC$ governing the use of this new unary predicate, called Idealization, Standardization and Transfer. The resulting system he called $\IST$, which stands for Internal Set Theory. The main logical result about $\IST$ is that it is a conservative extension of $\ZFC$, so any theorem provable in $\IST$ which does not involve the $\st$-predicate is provable in $\ZFC$ as well. Hence such theorems are genuine mathematical results, acceptable from the generally shared foundational standpoint of $\ZFC$.

The conservativity of $\IST$ over $\ZFC$ was proved twice. In the original paper where he introduces Internal Set Theory \cite{nelson77} (recently reprinted with a foreword by G.~F. Lawler in Volume 48, Number 4 of the \emph{Bulletin of the American Mathematical Society} in recognition of its status as a classic), Nelson gives a model-theoretic argument which he attributes to Powell. In a later publication \cite{nelson88}, he proves the same result syntactically by providing a ``reduction algorithm'' (a rewriting algorithm) for converting proofs performed in ${\IST}$ to ordinary ${\ZFC}$-proofs. There is a remarkable similarity between his reduction algorithm and the Shoenfield interpretation \cite{shoenfield01}; this observation was the starting point for this paper.

We will work with systems in higher types, such as $\ha$ and $\pa$, rather than set theory, because, as we mentioned before, we will not just be interested in establishing conservation results, but also in extracting terms from nonstandard proofs and ``proof mining''. Proof mining is an area of applied logic in which one uses proof-theoretic techniques to extract quantitive information (such as bounds on the growth rate of certain functions) from proofs in ordinary mathematics. In addition, such techniques can reveal certain uniformities leading to new qualitative results as well. Functional interpretations are one of the main tools in proof mining (for an introduction to this part of applied proof theory, see \cite{Kohlenbach08}). To extract interesting bounds, however, it is important that the mathematical arguments one analyses can be performed in sufficiently weak systems: therefore one considers systems such as $\ha$ or $\pa$, or fragments thereof, rather than $\ZFC$. The reason for considering systems in higher types (rather than $\PA$, for instance) is not just because they are more expressive, but also because higher types are precisely what makes functional interpretations work.

Although establishing conservation and term extraction results for systems of nonstandard arithmetic is what this paper is about, there is another way of looking at the results of this paper, which has more to do with the ideas of Lifschitz on calculable numbers \cite{lifschitz85}, of Berger on uniform Heyting arithmetic \cite{berger05} and of Hernest on the light Dialectica interpretation \cite{hernest05}, than with nonstandard arithmetic. Their idea was to have two kinds of quantifiers, one with computational content and one without. On the realizability interpretation of Lifschitz, the computationally meaningful  quantifiers are interpreted in the usual way (which, in the case of the existential quantifier, means that a realizer needs to exhibit a witness, while a realizer for a universal statement $\forall n \, \varphi(n)$ is a program which computes a realizer of $\varphi(n)$ from the value $n$). The computationally empty quantifiers, on the other hand, are to be interpreted uniformly (which means that it need not exhibit a witness in the existential case: a witness simply has to exist; while in the case of the universal quantifier it has to be a realizer which, uniformly, realizes $\varphi(n)$ for all $n$). 

A new unary predicate $\st$ introduces two types of quantifiers as well: the \emph{internal} quantifiers $\forall x$ and $\exists x$, as well as the \emph{external} quantifiers $\forallst x$ and $\existsst x$ (which can be seen as abbreviations of $\forall x \, ( \, \st(x) \to \ldots)$ and $\exists x \, ( \, \st(x) \land \ldots)$ respectively). Our initial idea was to interpret the former uniformly, while interpreting the latter in the usual way, in complete analogy with the ideas of Lifschitz. But this led to several, to us, undesirable effects (in particular, one could not realize the statement that the standard natural numbers are closed downwards). Our solution was to weaken the computational meaning of the external quantifiers: in particular, to realize $\existsst n \, \varphi(n)$ it suffices to exhibit a finite lists of candidates $n_1, \ldots, n_k$ such that at least one of the statements $\varphi(n_i)$ is realized. Because of the analogy with Herbrand disjunctions from proof theory, we have dubbed this type of realizability ``Herbrand realizability''.

To make Herbrand realizability work in a higher type setting, it is convenient to work in an extension of $\ha$ with types for finite sequences. More precisely, we will assume that there is a type $\sigma^*$ for sequences of objects of type $\sigma$. The type $\sigma^*$ carries the structure of a preorder (with $x \preceq y$ if every element in the list coded by $x$ also occurs in the list coded by $y$) and one would naturally expect realizers to be closed upwards with respect to this preorder. To make this work nicely, it will be useful to introduce a new kind of application (of functions to arguments) which is monotone in the first component. This can be done and with this additional ingredient Herbrand realizability can be defined. We will do this in Section 4.

It has to be admitted that the connection of Herbrand realizability to nonstandard arithmetic is rather tangential and, as a matter of fact, we will not be very interested in Herbrand realizability \emph{per se}. This will change radically if we turn to the functional interpretation introduced in Section 5, which bears the same relation to Herbrand realizability as the usual Dialectica interpretation does to modified realizability. It turns out that if one defines a Dialectica-type functional interpretation using the new application, with implication interpreted \emph{\`a la} Diller--Nahm \cite{dillernahm74}, basing it on some of the characteristic principles of Herbrand realizability, and the idea of having Herbrand disjunctions realize existential statements, this interpretation will, almost by magic, interpret and eliminate principles recognizable from nonstandard analysis. Our main reason for including Herbrand realizability is to have an easy point of access to and to provide some intuition for this functional interpretation.

So far the techniques we mentioned work only for constructive systems. But by combining the functional interpretation we mentioned above with negative translation, we are able to define a Shoenfield-type functional interpretation for classical nonstandard systems as well. In this way we also obtain conservation and term extraction results for classical systems. We will work out the technical details in Sections 6 and 7 below.

The resulting functional interpretations have some striking similarities with the bounded functional interpretations introduced by Ferreira and Oliva in \cite{ferreiraoliva05} and \cite{ferreira09} (see also \cite{gaspar09}). In the same way, Herbrand realizability seems related to the bounded modified realizability interpretation due to Ferreira and Nunes (for which, see \cite{ferreiranunes06}). We will briefly comment on this in Section 5 below.

The contents of this paper are therefore as follows. In Section 2 we will introduce an intuitionistic base system for our investigations into constructive nonstandard arithmetic. In Section 3, we will discuss some principles from nonstandard analysis and their relations. This will give one some ideas of how interpretations of nonstandard systems have to look like and will provide us with some ``benchmarks'' with which one can measure the success of an interpretation. In Section 4 we will introduce Herbrand realizability and discuss its merits as an interpretation of nonstandard arithmetic. Our nonstandard functional interpretation will be introduced in Section 5 and we will use it to prove several conservation and term extraction results in an intuitionistic context. In Section 6 we introduce a classical base system for nonstandard arithmetic and discuss two variants of the negative translation. These we will use in Section 7 to obtain a Shoenfield-type functional interpretation and derive conservation and term extraction results in a classical context. Finally, Section 8 discusses work in progress on saturation principles and other plans for future work.

We would like to thank Paulo Oliva, the participants in the Spring 2010 proof theory seminar at the Technische Universit\"at Darmstadt, especially Ulrich Kohlenbach, Jaime Gaspar, and Alexander Kreuzer, and the referee for helpful comments.
\section{Formalities}

In this section, we introduce our base systems for investigating nonstandard arithmetic. 

\subsection{The system $\ehastar$}

In this paper, $\ehastar$ will be the extension of the system called $\ehazero$ in \cite{Troelstra73} and $\ehaarrow$ in \cite{troelstravandalen88b} with types for finite sequences. More precisely, the collection of types $\Tpstar$ will be smallest set closed under the following rules:
\begin{enumerate}
\item[(i)] $0 \in \Tpstar$;
\item[(ii)] $\sigma, \tau \in \Tpstar \Rightarrow (\sigma \to \tau) \in \Tpstar$;
\item[(iii)] $\sigma \in \Tpstar \Rightarrow \sigma^* \in \Tpstar$.
\end{enumerate}

Because we have not included product types in $\Tpstar$, we will often be handling tuples of types or terms. We will always refer to such lists of types and terms as tuples and never as sequences, so as not to confuse them with terms of sequence type (i.e., of type $\sigma^*$ for some $\sigma \in \Tpstar$). In dealing with tuples, we will follow the notation and conventions of \cite{Troelstra73} and \cite{Kohlenbach08}. In particular, if $\tup x = x_0, \ldots, x_{m-1}$ and $\tup y = y_0, \ldots, y_{n-1}$, then
\begin{enumerate}
\item $[]$ stands for the empty tuple, while $\tup x, \tup y$ stands for $x_0, \ldots, x_{m-1}, y_0, \ldots, y_{n-1}$;
\item $x_i\tup y$ stands for $( \ldots ((x_iy_0)y_1) \ldots)y_{n-1}$, while $\tup x \tup y$ stands for $x_0\tup y, \ldots, x_{m-1}\tup y$ (and never for $x_0, \ldots, x_{m-1}\tup y$);
\item $\lambda \tup x. \tup y$ stands for $\lambda \tup x.y_0, \ldots, \lambda \tup x. y_{n-1}$;
\item and, finally, if $\underline{x}=x^{\sigma_0}_0,\ldots,x^{\sigma_{m-1}}_{m-1}$ and $\underline{y}=y^{\sigma_0}_0,\ldots,y^{\sigma_{m-1}}_{m-1}$ are tuples having the same length and types, we will write $\tup{x} =_{\tup{\sigma}} \tup{y}$
for
\[
        \bigwedge_{j=0}^{m-1} \, x_j =_{\sigma_j} y_j.
\]
\end{enumerate}

Because we have included sequence types, we will have to enrich the term language (G\"odel's $\T$); it now also includes a constant $\et_\sigma$ of type $\sigma^*$ and an operation $c$ of type $\sigma \to (\sigma^* \to \sigma^*)$ (for the empty sequence and the operation of prepending an element to a sequence, respectively), as well as a list recursor $\tup L_{\sigma, \tup \rho}$ satisfying the following axioms:
\begin{eqnarray*}
\tup L_{\sigma, \tup \rho}\,  \et_\sigma \tup y \tup z & =_{\tup \rho} & \tup y, \\
\tup L_{\sigma, \tup \rho} \, c(a, x) \tup y \tup z & =_{\tup \rho} & \tup z({\tup L}_{\sigma, \tup \rho}\, x\tup y \tup z) a,
\end{eqnarray*}
where $\tup \rho = \rho_1, \ldots, \rho_k$ is a $k$-tuple of types, $\tup y = y_1, \ldots, y_k$ is a $k$-tuple of terms with $y_i$ of type $\rho_i$ and $\tup z = z_1, \ldots, z_k$ is a $k$-tuple of terms with $z_i$ of type $\rho_1 \to \ldots \to \rho_k \to \sigma \to \rho_i$
(compare \cite[p. 456]{troelstravandalen88b} or \cite[p. 48]{Kohlenbach08}). In addition, we have the recursors and combinators for all the new types in G\"odel's $\T$, satisfying the usual equations.  The resulting extension we will denote by $\Tstar$.

We will have a primitive notion of equality at every type and equality axioms expressing that equality is a congruence (as in \cite[p. 448-9]{troelstravandalen88b}). Since decidability of quantifier-free formulas is not essential for this paper, this choice will not create any difficulties. In addition, we assume the axiom of extensionality for functions:
\begin{displaymath}
\begin{array}{l}
f =_{\sigma \to \tau} g \leftrightarrow \forall x^\sigma \, fx =_{\tau} gx.
\end{array}
\end{displaymath}

Of course, the underlying logic of $\ehastar$ is constructive; we will assume it is axiomatized as in \cite[p. 42]{Kohlenbach08}. We also have all the usual axioms of $\eha$, as in \cite[p.~48-9]{Kohlenbach08}, for example, where it is to be understood that the induction axiom applies to all formulas in the language (i.e., also those containing variables of sequence type and the new terms that belong to $\Tstar$). Finally, we add the following sequence axiom:
\[ \SA: \quad \forall y^{\sigma^*} \, ( \, y = \et_\sigma \lor \exists a^\sigma, x^{\sigma^*} \, y = c(a,x) \, ). \]
In normal $\eha$, as in \cite{Kohlenbach08} or \cite{troelstravandalen88b}, for example, one can also talk about sequences, but these have to be coded up (see \cite[p. 59]{Kohlenbach08}). As a result, $\ehastar$ is a definitional extension of, and hence conservative over, $\eha$ as defined in \cite{Kohlenbach08} or \cite{troelstravandalen88b}.

\subsection{The system $\ehaststar$}

\begin{dfn} The language of the system $\ehaststar$ is obtained by extending that of $\ehastar$ with unary predicates $\st^\sigma$ as well as two new quantifiers $\forallst x^\sigma$ and $\existsst x^\sigma$ for every type $\sigma \in \Tpstar$. Formulas in the language of $\ehastar$ (i.e., those that do not contain the new predicate $\st_\sigma$ or the two new quantifiers $\forallst x^\sigma$ and $\existsst x^\sigma$) will be called \emph{internal}. Formulas which are not internal will be called \emph{external}.
\end{dfn}

We will adopt the following
\begin{quote}
{\large \sc{Important convention:}} We follow Nelson \cite{nelson88} in using small Greek letters to denote internal formulas and capital Greek letters to denote formulas which can be external.
\end{quote}

\begin{dfn}[$\ehaststar$] The system $\ehaststar$ is obtained by adding to $\ehastar$ the axioms $\EQ, \Tst$ and  $\IA^{\st{}}$, where
\begin{itemize}
\item $\EQ$ stands for the defining axioms of the external quantifiers:
\begin{eqnarray*}
\forallst x \, \Phi(x) & \leftrightarrow &  \forall x \, (\, \st(x)\rightarrow\Phi(x) \, ),\\
\existsst x \, \Phi(x) & \leftrightarrow & \exists x \, (\, \st(x)\wedge\Phi(x) \, ),
\end{eqnarray*}
with $\Phi(x)$ an arbitrary formula, possibly with additional free variables.
\item $\Tst$ consists of:
\begin{enumerate}
\item the axioms $\st(x) \land x = y \to \st(y)$,
\item the axiom $\st(t)$ for each closed term $t$ in $\Tstar$,
\item the axioms $\st(f)\wedge\st(x)\rightarrow\st(fx)$.
\end{enumerate}
\item $ \IA^{\st{}}$ is the external induction axiom:
\[
\IA^{\st{}} \quad : \quad\big(\Phi(0)\wedge\forallst n^0 (\Phi(n)\rightarrow\Phi(n+1) )\big)\rightarrow\forallst n^0 \Phi(n),
\]
where $\Phi(n)$ is an arbitrary formula, possibly with additional free variables.
\end{itemize}
Here it is to be understood that in $\ehaststar$ the laws of intuitionistic logic apply to all formulas, while the induction axiom from $\ehastar$
\[ \quad\big(\varphi(0)\wedge\forall n^0 (\varphi(n)\rightarrow\varphi(n+1) )\big)\rightarrow\forall n^0 \varphi(n) \]
applies to internal formulas $\varphi$ only.
\end{dfn}

\begin{lemma} \label{congruence}
$\ehaststar \vdash \Phi(x) \land x= y\to \Phi(y)$ for every formula $\Phi$.
\label{le:extensionality}
\end{lemma}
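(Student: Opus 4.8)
The plan is to prove this by induction on the structure of the formula $\Phi$, using as the base case the internal atomic formulas and working outwards through the logical connectives, the internal quantifiers, the $\st$-predicate, and finally the external quantifiers. The statement we prove by induction is precisely the congruence property: for each $\Phi$ with a distinguished free variable $x$, $\ehaststar \vdash \Phi(x) \land x =_\sigma y \to \Phi(y)$ (and of course one handles substitution into any free variable position simultaneously, so really one proves it for all free variables at once).

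For the base cases, note that there are two kinds of atomic formulas: internal equalities $s =_\tau t$, which are handled by the equality axioms of $\ehastar$ (equality is a congruence, as stipulated in the subsection on $\ehastar$), and the new atomic formulas $\st^\tau(s)$. For the latter, if $x$ occurs in the term $s$, then replacing $x$ by $y$ where $x =_\sigma y$ yields a term $s'$ with $s =_\tau s'$ by the congruence of internal equality, and then clause (1) of $\Tst$ — the axiom $\st(u) \land u = v \to \st(v)$ — gives $\st(s) \to \st(s')$. The propositional connectives $\land, \lor, \to$ and the internal quantifiers $\forall, \exists$ are entirely routine: the induction hypothesis propagates through them, using for the quantifier cases that $x$ and $y$ can be chosen distinct from the bound variable. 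The $\forallst$ and $\existsst$ cases are reduced to the internal-quantifier cases together with the $\st$-case by first rewriting them via $\EQ$: $\forallst z\, \Psi(z,x)$ is equivalent to $\forall z\,(\st(z) \to \Psi(z,x))$, to which the induction hypothesis (applied to the smaller formula $\st(z) \to \Psi(z,x)$) applies directly; similarly for $\existsst$.

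I do not expect any genuine obstacle here — the lemma is essentially a bookkeeping exercise, and the only mildly delicate point is making sure the induction is set up so that substitution into term positions (inside atomic formulas, including inside $\st(s)$) is covered, which is why one leans on the already-assumed fact that internal equality is a congruence with respect to all terms of $\Tstar$. The mild subtlety worth flagging is that external induction $\IA^{\st}$ is not needed and indeed cannot be used, since the statement is a schema over all formulas proved by meta-level induction on formula complexity, not by induction inside the theory; likewise the restriction of internal induction to internal formulas is irrelevant. One should also remark that the same argument shows $\ehaststar \vdash x =_\sigma y \to (\Phi(x) \leftrightarrow \Phi(y))$, by applying the lemma in both directions.
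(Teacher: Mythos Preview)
Your proposal is correct and follows essentially the same approach as the paper: induction on the logical structure of $\Phi$, with the $\st$-case handled by clause (1) of $\Tst$ and the external quantifiers reduced to the internal ones via $\EQ$. The paper's proof is just a terser version of what you wrote.
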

\begin{proof}
By induction on the logical structure of $\Phi$.  Note that the $\st$-predicate is extensional by $\Tst$ and the case of the external quantifiers $\forallst$ and $\existsst$ can be reduced to that of the internal quantifiers $\forall$ and $\exists$ by using the $\EQ$-axiom.
\end{proof}

\begin{lemma}
$\ehaststar \vdash \st^0(x) \land y \leq x \to \st^0(y)$.
\end{lemma}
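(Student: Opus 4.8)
The plan is to prove, by external induction on~$x$, the internal-parametrised formula
\[
\Phi(x) \ :\equiv\ \forall y^0\,(\,y \leq x \to \st^0(y)\,),
\]
obtaining $\forallst x^0\,\Phi(x)$, from which the stated implication follows at once via $\EQ$: given $\st^0(x)$ we have $\Phi(x)$, hence $y \leq x \to \st^0(y)$. Note that $\IA^{\st}$ may legitimately be applied here, since it is stated for arbitrary (possibly external, possibly parametrised) formulas, and $\Phi$ is such a formula.

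For the base case I would use the internal arithmetical fact $y \leq 0 \to y = 0$ (provable already in~$\eha$), together with $\st^0(0)$ — which holds by clause~(2) of $\Tst$, as $0$ is a closed term of~$\Tstar$ — and the extensionality of the $\st$-predicate (clause~(1) of $\Tst$, i.e.\ the instance of Lemma~\ref{congruence} for the atomic formula $\st^0$); this yields $\st^0(y)$ whenever $y \leq 0$, so $\Phi(0)$. For the induction step I must establish $\forallst n^0\,(\Phi(n) \to \Phi(n+1))$, so I fix $n$ with $\st^0(n)$, assume $\Phi(n)$, and take any $y$ with $y \leq n+1$. The internal fact $y \leq n+1 \to (\,y \leq n \vee y = n+1\,)$ (constructively available in~$\eha$, using decidability of $\leq$ on type~$0$ and antisymmetry) splits this into two cases. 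If $y \leq n$, then $\st^0(y)$ by $\Phi(n)$. If $y = n+1$, then since the successor function is a closed term of~$\Tstar$, hence standard by clause~(2) of $\Tst$, clause~(3) of $\Tst$ (closure of $\st$ under application) gives $\st^0(n+1)$ from $\st^0(n)$, and extensionality of $\st$ then gives $\st^0(y)$. In either case $\st^0(y)$, so $\Phi(n+1)$ follows.

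I do not expect any real obstacle: the argument is a routine application of external induction. The only points requiring attention are that the three clauses of $\Tst$ are invoked correctly (closure of $\st$ under equality, standardness of the closed terms $0$ and~$S$, and closure of $\st$ under application), and that the auxiliary facts $y \leq 0 \to y = 0$ and $y \leq n+1 \to (y \leq n \vee y = n+1)$ are indeed provable internally in~$\eha$ — which they are.
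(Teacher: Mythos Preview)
Your proof is correct and follows exactly the approach of the paper, which simply says to apply external induction to the formula $\Phi(x) :\equiv \forall y\,(\,y \leq x \to \st(y)\,)$. You have merely spelled out the base case and induction step in more detail than the paper does.
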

\begin{proof}
Apply external induction to the formula $\Phi(x) :\equiv \forall y \,( \, y \leq x \to \st(y))$.
\end{proof}

\begin{remark}
The previous lemma implies that, \emph{whenever $n$ is a standard natural number}, a bounded internal quantifier of the form $\existsst i \leq n$ can always be replaced by $\exists i \leq n$ and vice versa (the same applies to the universal quantifiers, of course). So we can regard such bounded quantifiers as internal or external, depending on what suits us best. Most often, however, it will be convenient to regard them as internal quantifiers.
\end{remark}

\begin{dfn}
For any formula $\Phi$ in the language of $\ehaststar$, we define its \emph{internalization} $\Phi^{\intern}$ to be the formula one obtains from $\Phi$ by replacing $\st(x)$ by $x = x$, and $\forallst x$ and $\existsst x$ by $\forall x$ and $\exists x$, respectively.
\end{dfn}

One of the reasons $\ehaststar$ is such a convenient system for our proof-theoretic investigations is because we have the following easy result:

\begin{prop} \label{conservativeint}
If a formula $\Phi$ is provable in $\ehaststar$, then its internalization $\Phi^{\intern}$ is provable in $\ehastar$. Hence $\ehaststar$ is a conservative extension of $\ehastar$ and $\eha$.
\end{prop}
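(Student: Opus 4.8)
The plan is to proceed by induction on the length of the derivation of $\Phi$ in $\ehaststar$, showing that each axiom of $\ehaststar$ has a provable internalization in $\ehastar$ and that the internalization operation $(-)^{\intern}$ commutes (up to provable equivalence, and in the obvious way) with all the logical rules. Since $(\cdot)^{\intern}$ simply replaces $\st(x)$ by the trivially true $x=x$ and collapses $\forallst x,\existsst x$ to $\forall x,\exists x$, it is immediate that $(\Phi \land \Psi)^{\intern} \equiv \Phi^{\intern}\land\Psi^{\intern}$, $(\Phi\to\Psi)^{\intern}\equiv\Phi^{\intern}\to\Psi^{\intern}$, $(\forall x\,\Phi)^{\intern}\equiv\forall x\,\Phi^{\intern}$, and likewise for the external quantifiers, so the intuitionistic logical rules are preserved verbatim: if the internalizations of the premises of a rule are derivable in $\ehastar$, so is the internalization of the conclusion. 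This reduces the problem to checking the axioms.

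First I would treat the internal axioms of $\ehastar$: these are internal formulas, so $\varphi^{\intern}\equiv\varphi$, and they are provable in $\ehastar$ by definition; in particular the internal induction axiom is its own internalization. Next, the axioms in $\Tst$: the congruence axiom $\st(x)\land x=y\to\st(y)$ internalizes to $x=x\land x=y\to y=y$, which is trivial; $\st(t)$ internalizes to $t=t$; and $\st(f)\land\st(x)\to\st(fx)$ becomes $f=f\land x=x\to fx=fx$. Similarly $\EQ$: the first equivalence $\forallst x\,\Phi(x)\leftrightarrow\forall x\,(\st(x)\to\Phi(x))$ internalizes to $\forall x\,\Phi^{\intern}(x)\leftrightarrow\forall x\,(x=x\to\Phi^{\intern}(x))$, which is provable, and the existential case is analogous with $x=x\land\Phi^{\intern}(x)$. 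Finally, the external induction axiom $\IA^{\st}$ internalizes precisely to the ordinary (full) induction axiom $\big(\Phi^{\intern}(0)\land\forall n\,(\Phi^{\intern}(n)\to\Phi^{\intern}(n+1))\big)\to\forall n\,\Phi^{\intern}(n)$ applied to the formula $\Phi^{\intern}$, which is available in $\ehastar$ since induction there is for all formulas of the language of $\ehastar$ (and $\Phi^{\intern}$ is such a formula).

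Having verified all axioms and rules, the induction on derivations goes through and gives the first claim: $\ehaststar\vdash\Phi$ implies $\ehastar\vdash\Phi^{\intern}$. For the conservativity statement, observe that if $\Phi$ is already internal then $\Phi^{\intern}\equiv\Phi$, so $\ehaststar\vdash\Phi$ implies $\ehastar\vdash\Phi$; since $\ehaststar$ extends $\ehastar$, the two prove exactly the same internal formulas, i.e.\ $\ehaststar$ is a conservative extension of $\ehastar$. Conservativity over $\eha$ then follows by composing with the fact, recorded in the previous subsection, that $\ehastar$ is a definitional (hence conservative) extension of $\eha$.

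The only point requiring care — and hence the one I would flag as the main obstacle, though it is a mild one — is checking that the internalization genuinely commutes with \emph{all} the inference rules and the handling of free variables and substitution (e.g.\ that $(\Phi[t/x])^{\intern}\equiv\Phi^{\intern}[t/x]$, which holds because $(-)^{\intern}$ does not touch terms), so that no rule of intuitionistic predicate logic can take internally-provable premises to a conclusion whose internalization fails. Once the syntactic bookkeeping is set up correctly this is entirely routine, which is why the authors call the result ``easy''.
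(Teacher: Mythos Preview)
Your proposal is correct and follows exactly the approach the paper has in mind; the paper's proof is the one-line remark ``Clear, because the internalizations of the axioms of $\ehaststar$ are provable in $\ehastar$,'' and you have simply spelled out the routine verification that underlies that remark.
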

\begin{proof}
Clear, because the internalizations of the axioms of $\ehaststar$ are provable in $\ehastar$.
\end{proof}

\subsection{Operations on finite sequences}

Using the list recursor $L_{\sigma, \tup \rho}$ one can define a length function $|\cdot|: \sigma^* \to 0$ satisfying
\begin{eqnarray*}
|\et_\sigma| & = & 0, \\
|c(a, x)|  & = &S|x|.
\end{eqnarray*}
Moreover, we can fix for every type $\sigma$ a term $\mathcal{O}_\sigma$ in $\Tstar$ of that type. One can then also define a projection function $\sigma^* \to (0 \to \sigma)$; we will write $(x)_i$ for the $i$th projection of $x$. It satisfies:
\begin{eqnarray*}
(\et_\sigma)_n & = & \mathcal{O}_\sigma, \\
(c(a, x))_0 & = & a, \\
(c(a, x))_{Sn} & = & (x)_n. 
\end{eqnarray*}
In addition, we will have an operation which given $x_0, \ldots, x_{n-1}$ of type $\sigma$ builds an object $x = \langle x_0, \ldots, x_{n-1} \rangle$ of type $\sigma^*$ for which we have $|x| = n$ and
\begin{displaymath}
\begin{array}{lcll}
(x)_i & = & x_i & \mbox{if } i < |x|, \\
(x)_i & = & \mathcal{O}_\sigma & \mbox{otherwise}.
\end{array}
\end{displaymath}
We will also need a concatenation operation $*_{\sigma^*}: \sigma^* \to (\sigma^* \to \sigma^*)$ defined by:
\begin{eqnarray*}
\et_\sigma *_{\sigma^*} y & = & y, \\
c(a,x) *_{\sigma^*} y & = & c(a, x*y).
\end{eqnarray*}
This we can use to define an $n$-fold concatenation: If $F: 0 \to \sigma^*$ and $n$ is of type 0, then we can set:
\begin{eqnarray*}
(F(0) *_{\sigma^*} \ldots *_{\sigma^*} F(n-1)) & = & \left\{ \begin{array}{ll}
\et_\sigma & \mbox{if } n = 0, \\
(F(0) *_{\sigma^*} \ldots *_{\sigma^*} F(n-2)) *_{\sigma^*} F(n-1) & \mbox{if } n > 0.
\end{array} \right.
\end{eqnarray*}
Note that $F(0) *_{\sigma^*} \ldots *_{\sigma^*} F(n-1) = F(0)$ if $n = 1$.

\begin{lemma} \label{presstandardness}
     \begin{enumerate}
       \item $\ehaststar \vdash \st(x^{\sigma^*}) \to \st (|x|),$
      \item $\ehaststar \vdash \st(x^{\sigma^*}) \to \st ((x)_{i}),$
\item $\ehaststar \vdash \st(x^{\sigma}_0) \land \ldots \land \st(x^{\sigma}_n) \to \st (\langle x^{\sigma}_0,\ldots,x^{\sigma}_n\rangle),$
      \item $\ehaststar \vdash \st(x^{\sigma^*}) \land \st(y^{\sigma^*}) \to \st (x*_{\sigma}y).$
\item $\ehaststar \vdash \st(F^{0 \to \sigma^*}) \land \st(n^0) \to \st(F(0) * \ldots * F(n-1))$.
    \end{enumerate}
\end{lemma}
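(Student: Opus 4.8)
The plan is to notice that each of the five operations in the statement is (denoted by) a closed term of G\"odel's $\Tstar$, and then to read off standardness from the axioms $\Tst$: a closed term of $\Tstar$ is standard by $\Tst(2)$, and a standard function applied to standard arguments yields a standard value by $\Tst(3)$. For (1), the length function is definable from the list recursor $L_{\sigma,0}$, $0$, $S$ and the combinators, so there is a closed term $\ell$ of $\Tstar$ with $|x| = \ell x$; then $\st(\ell)$ holds by $\Tst(2)$, hence $\st(x) \to \st(\ell x)$ by $\Tst(3)$, which is (1). I would treat (3), (4) and (5) in exactly the same way: $\langle x_0, \ldots, x_n\rangle$ is built from the closed constant $\et_\sigma$ and $n{+}1$ applications of the closed term $c$ (say, $\langle x_0, \ldots, x_n\rangle = c(x_0, c(x_1, \ldots, c(x_n, \et_\sigma)\ldots))$), concatenation is given by a closed term $*$ of $\Tstar$, and the $n$-fold concatenation $F(0) * \ldots * F(n-1)$ is, as a function of $F$ and $n$, given by a closed term of $\Tstar$ obtained from the recursor $R_{\sigma^*}$ and $*$; in each case one iterates $\Tst(3)$, starting from standardness of the closed terms and constants involved.

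The only clause needing slightly more care is (2), since the index $i$ need not be standard. Here I would split on the internal, decidable disjunction $i \geq |x| \lor i < |x|$. In the first case $\ehaststar$ proves the internal identity $(x)_i = \mathcal{O}_\sigma$ (by internal induction on $x$, using $\SA$), and $\mathcal{O}_\sigma$ is a closed term of $\Tstar$, so $\st((x)_i)$ follows from $\Tst(2)$ and $\Tst(1)$ (equivalently, Lemma \ref{congruence}). In the second case $\st(x)$ gives $\st(|x|)$ by part (1), whence $i < |x|$ together with the preceding lemma on downward closure of the standard natural numbers yields $\st(i)$; writing $P$ for the closed term of $\Tstar$ denoting the projection function, $(x)_i = P x i$, and $\st((x)_i)$ follows from $\st(P)$, $\st(x)$, $\st(i)$ by two applications of $\Tst(3)$. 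Either way $\st((x)_i)$.

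I expect the only real obstacle to be this small case distinction in (2) forced by a possibly nonstandard index; all the other clauses collapse to the single observation that the sequence operations introduced in the previous subsection are closed terms of $\Tstar$, so that the closure axioms $\Tst$ apply verbatim.
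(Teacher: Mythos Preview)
Your proof is correct and follows the same approach as the paper, whose proof is the single line ``Follows from the $\Tst$-axioms together with the fact that the list recursor $L$ belongs to $\Tstar$.'' Your treatment of (1), (3), (4), (5) is exactly this: each operation is given by a closed $\Tstar$-term, so iterate $\Tst(2)$ and $\Tst(3)$.

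For (2) you have spelled out a genuine subtlety that the paper's one-liner leaves implicit: the index $i$ is a free variable of type $0$ with no standardness hypothesis, so one cannot simply apply $\Tst(3)$ to the projection term $P$ with arguments $x$ and $i$. Your case split on the internal decidable disjunction $i < |x| \lor i \geq |x|$ is the right fix. In the first case downward closure of the standard naturals gives $\st(i)$ and $\Tst(3)$ applies; in the second case the internal identity $(x)_i = \mathcal{O}_\sigma$ reduces the claim to $\Tst(2)$ and extensionality of $\st$. One small remark: the identity $(x)_i = \mathcal{O}_\sigma$ for $i \geq |x|$ is indeed provable in $\ehastar$ from $\SA$ and ordinary induction (essentially the sequence-induction argument the paper gives a bit later), so there is no circularity, only a forward reference you might want to flag.
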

\begin{proof} Follows from the $\Tst$-axioms together  with the fact that the list recursor $L$ belongs to $\Tstar$.
\end{proof}

\begin{notation}
\begin{enumerate}
\item
Given $\underline{x}=x^{\sigma_0}_0,\ldots,x^{\sigma_{m-1}}_{m-1}$ and $\underline{i}=i^{0}_0,\ldots,i^{0}_{m-1}$ we will write
$|\tup{x}|$ for $|x^{\sigma_0}_0|,\ldots,|x^{\sigma_{m-1}}_{m-1}|$ and $(\tup{x})_{\tup{i}}$ for $(x^{\sigma_0}_0)_{i_0},\ldots,(x^{\sigma_{m-1}}_{m-1})_{i_{m-1}}$.
\item Given $\underline{x}=x^{\sigma_0}_0,\ldots,x^{\sigma_{m-1}}_{m-1}$ and $\underline{y}=y^{\sigma_0}_0,\ldots,y^{\sigma_{m-1}}_{m-1}$, we will write  $\tup{\langle \tup{x} \rangle}$
for $\langle x^{\sigma_0}_0 \rangle ,\ldots, \langle x^{\sigma_{m-1}}_{m-1} \rangle$, and $\tup{\langle \tup{x},\tup{y}\rangle}$ for $\langle x_0 , y_0\rangle ,\ldots, \langle x_{m-1},y_{m-1} \rangle$.
\item Given $\underline{x}=x^{\sigma^*_0}_0,\ldots,x^{\sigma^*_{m-1}}_{m-1}$ and $\underline{y}=y^{\sigma^*_0}_0,\ldots,y^{\sigma^*_{m-1}}_{m-1}$, we will write $\tup{x} *_{\tup{\sigma}^*} \tup{y}$ for
\[
            x_0 *_{\sigma^*_0} y_0 ,\ldots, x_{m-1} *_{\sigma^*_{m-1}} y_{m-1}.
\]
\end{enumerate}
\end{notation}

\subsection{Finite sets}

Most of the time, we will regard finite sequences as stand-ins for finite sets. In fact, we will need the notion of an element and that of one sequence being contained in another, as given in the definitions below.

\begin{dfn}\label{def:element}
For $s^{\sigma},t^{\sigma^*}$ we write $s \in_{\sigma} t$ and say that $s$ \emph{is an element of} $t$ if
\[
         \exists i < |t| (\, s =_{\sigma} (t)_i \, ).
\]
For $\tup{s}^{\tup{\sigma}}=s_0^{\sigma_0},\ldots,s_{n-1}^{\sigma_{n-1}}$ and $\tup{t}^{\tup{\sigma}^*}=t_0^{\sigma^*_0},\ldots,t_{n-1}^{\sigma^*_{n-1}}$ we write $\tup{s} \in_{\tup{\sigma}} \tup{t}$ and say that $\tup{s}$ \emph{is an element of} $\tup{t}$ if
\[
        \bigwedge_{k=0}^{n-1} \, s_k \in_{\sigma_k} t_k.
\]
In case no confusion can arise, we will drop the subscript and write simply $\in$ instead of $\in_{\sigma}$ or $\in_{\sigma^*}$.
\end{dfn}

\begin{lemma} \label{elemstsetset}
$\ehaststar \vdash \st(x^{\sigma^*}) \land y \in_\sigma x \to \st(y^\sigma)$.
\end{lemma}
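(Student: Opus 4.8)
The plan is to prove $\ehaststar \vdash \st(x^{\sigma^*}) \land y \in_\sigma x \to \st(y^\sigma)$ by combining the two facts already available: that projections of standard sequences are standard (Lemma~\ref{presstandardness}(2)), and that standardness respects equality (Lemma~\ref{congruence}, i.e.\ $\Phi(u) \land u = v \to \Phi(v)$ applied with $\Phi \equiv \st$).

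First I would unfold the definition of membership: by Definition~\ref{def:element}, $y \in_\sigma x$ means $\exists i < |x| \,(\, y =_\sigma (x)_i \,)$. So assume $\st(x^{\sigma^*})$ and $y \in_\sigma x$. From the latter we obtain some $i < |x|$ with $y =_\sigma (x)_i$. Now from $\st(x)$ and Lemma~\ref{presstandardness}(2) we get $\st((x)_i)$ for this particular $i$ (note that part (2) of that lemma gives $\st((x)_i)$ for arbitrary $i$, without needing $i$ itself to be standard, since it is a consequence of the $\Tst$-axioms applied to the projection term, which lies in $\Tstar$). Then, since $y =_\sigma (x)_i$, Lemma~\ref{congruence} with the external formula $\Phi(u) :\equiv \st(u)$ yields $\st(y)$. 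Finally, since $y$ and the witness $i$ were obtained by an existential elimination and $\st(y)$ does not mention $i$, we may discharge the existential and conclude $\st(y)$, giving the implication.

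The only point requiring a little care — and the closest thing to an obstacle — is making sure the use of Lemma~\ref{presstandardness}(2) is legitimate here: we are instantiating it at the specific (internal, not necessarily standard) index $i$ supplied by the membership hypothesis. This is fine because that lemma is proved uniformly in $i$ from the $\Tst$-axioms, so it holds for all $i$, standard or not. Everything else is routine intuitionistic reasoning inside $\ehaststar$, using the $\EQ$-axioms only implicitly (membership is defined via an internal bounded existential, so no external quantifier manipulation is even needed). Hence the proof is essentially just: unwind $\in$, apply Lemma~\ref{presstandardness}(2), then transport standardness along the equality via Lemma~\ref{congruence}.
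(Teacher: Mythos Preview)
Your proof is correct and follows essentially the same route as the paper: unfold $y \in_\sigma x$, apply Lemma~\ref{presstandardness}.2 to obtain $\st((x)_i)$, and then use extensionality of $\st$ to conclude $\st(y)$. The only cosmetic difference is that the paper cites the first $\Tst$-axiom directly for the last step, whereas you invoke the more general Lemma~\ref{congruence} with $\Phi \equiv \st$, which amounts to the same thing.
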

\begin{proof}
Follows from Lemma \ref{presstandardness}.2 and the extensionality of the $\st$-predicate (first part of the $\Tst$-axiom).
\end{proof}

\begin{dfn}\label{def:preorder}
For $s^{\sigma^*},t^{\sigma^*}$ we write $s \preceq_{\sigma} t$ and say that $s$ \emph{is contained in} $t$ if
\[
         \forall x^\sigma \, ( \, x \in s \to x \in t \, ),
\]
or, equivalently,
\[ \forall i < |s| \, \exists j < |t| \, (s)_i =_{\sigma} (t)_j. \]
For $\tup{s}^{\tup{\sigma}^*}=s_0^{\sigma^*_0},\ldots,s_{n-1}^{\sigma^*_{n-1}}$ and $\tup{t}^{\tup{\sigma}^*}=t_0^{\sigma^*_0},\ldots,t_{n-1}^{\sigma^*_{n-1}}$ we write $\tup{s} \preceq_{\tup{\sigma}} \tup{t}$ and say that $\tup{s}$ \emph{is contained in} $\tup{t}$ if
\[
        \bigwedge_{k=0}^{n-1} \, s_k \preceq_{\sigma_k} t_k.
\]
\end{dfn}

\begin{lemma} $\ehastar$ proves that $\preceq_{\sigma}$ determines a preorder on the set of objects of type $\sigma^*$. More precisely, for all $x^{\sigma^*}$ we have $x \preceq_{\sigma} x$, and for all $x^{\sigma^*},y^{\sigma^*},z^{\sigma^*}$ with $x \preceq_{\sigma} y$ and $y \preceq_{\sigma} z$, we have $x \preceq_{\sigma} z$.
\end{lemma}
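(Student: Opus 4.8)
The plan is to verify reflexivity and transitivity directly from Definition~\ref{def:preorder}, using only that the primitive equality relation at each type is reflexive and transitive (which is part of the congruence axioms for equality assumed in $\ehastar$) together with entirely elementary reasoning about bounded numerical quantifiers; no induction is needed.

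First I would record that the two formulations of $s \preceq_\sigma t$ given in Definition~\ref{def:preorder} are equivalent in $\ehastar$. Unfolding $x \in_\sigma s$ as $\exists i < |s| \, (x =_\sigma (s)_i)$, the statement $\forall x^\sigma \, (x \in s \to x \in t)$ becomes $\forall x^\sigma \, (\exists i < |s| \, x =_\sigma (s)_i \to \exists j < |t| \, x =_\sigma (t)_j)$. For the left-to-right direction, given $i < |s|$ one instantiates $x := (s)_i$ and uses reflexivity of $=_\sigma$ to get $(s)_i \in s$, hence $(s)_i \in t$, which is exactly $\exists j < |t| \, (s)_i =_\sigma (t)_j$. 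The converse direction uses transitivity of $=_\sigma$ to pass from $x =_\sigma (s)_i$ and $(s)_i =_\sigma (t)_j$ to $x =_\sigma (t)_j$. In the rest of the argument I will use whichever of the two equivalent forms is more convenient.

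For reflexivity, fix $x^{\sigma^*}$; given any $i < |x|$ take $j := i$, so $j < |x|$ and $(x)_i =_\sigma (x)_i$, whence $x \preceq_\sigma x$ by the second formulation. For transitivity, assume $x \preceq_\sigma y$ and $y \preceq_\sigma z$ and argue with the first formulation: given $w^\sigma$ with $w \in x$ we obtain $w \in y$ from $x \preceq_\sigma y$ and then $w \in z$ from $y \preceq_\sigma z$, so $\forall w^\sigma \, (w \in x \to w \in z)$, i.e.\ $x \preceq_\sigma z$. (Alternatively, with the second formulation: given $i < |x|$ pick $j < |y|$ with $(x)_i =_\sigma (y)_j$, then $k < |z|$ with $(y)_j =_\sigma (z)_k$, and conclude $(x)_i =_\sigma (z)_k$ by transitivity of $=_\sigma$, so $\exists k < |z| \, (x)_i =_\sigma (z)_k$.)

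There is essentially no obstacle here; the only point worth flagging is that the argument relies on having reflexivity and transitivity of primitive equality at type $\sigma$ available, which we do since equality at every type is assumed to be a congruence, and that all quantifiers involved are either universal over type $\sigma$ or bounded over the natural numbers, so the reasoning is unproblematic intuitionistically.
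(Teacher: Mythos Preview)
Your argument is correct and is exactly the unwinding of what the paper means by ``Obvious.'' The paper's own proof is just that one word, so your explicit verification of reflexivity and transitivity from Definition~\ref{def:preorder} (including the remark on the equivalence of the two formulations) matches it in spirit while supplying the details.
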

\begin{proof}
Obvious.
\end{proof}

In relation to this ordering the notion of a property being \emph{upwards closed} in a variable $x$ will be of importance.
\begin{dfn}
A property $\Phi(\tup{x}^{\tup{\sigma}^*})$ is called \emph{upwards closed in $\tup{x}$} if
$\Phi(\tup{x})\land \tup{x} \preceq \tup{y} \to \Phi(\tup{y})$
and \emph{downwards closed in $\tup{x}$} if
$\Phi(\tup{x})\land \tup{y} \preceq \tup{x} \to \Phi(\tup{y})$.
\end{dfn}

\subsection{Induction and extensionality for sequences}

The aim of this subsection is to prove induction and extensionality principles for sequences. It all relies on the following lemma:

\begin{lemma}
\begin{enumerate}
\item $\ehastar \vdash \forall x^{\sigma^*} ( \, |x| = 0 \leftrightarrow x = \et_\sigma \, )$.
\item $\ehastar \vdash \forall n^0, x^{\sigma^*} ( \, |x| = Sn \leftrightarrow \exists a, y^{\sigma^*} \, ( \,  x = c(a, y) \land |y| = n \, ) \, )$.
\end{enumerate}
\end{lemma}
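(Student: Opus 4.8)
The plan is to prove both parts by induction on the structure of sequences, using the sequence axiom $\SA$ and the basic defining equations for the length function $|\cdot|$, the empty sequence $\et_\sigma$, and the prepend operation $c$.

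For part (1), the direction from right to left is immediate: if $x = \et_\sigma$, then $|x| = |\et_\sigma| = 0$ by the defining equation for the length function. For the left-to-right direction, I would invoke $\SA$ to split into the two cases $x = \et_\sigma$ (where the conclusion holds trivially) and $x = c(a, y)$ for some $a^\sigma, y^{\sigma^*}$. In the latter case, $|x| = |c(a,y)| = S|y|$, which is never equal to $0$ in $\ehastar$ (since $Sn = 0$ is refutable in arithmetic), so the hypothesis $|x| = 0$ is contradictory and the implication holds vacuously.

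For part (2), again the right-to-left direction is a direct computation: if $x = c(a, y)$ with $|y| = n$, then $|x| = |c(a,y)| = S|y| = Sn$. For the left-to-right direction, I would again apply $\SA$ to $x$. The case $x = \et_\sigma$ is impossible since it would give $0 = |x| = Sn$, contradicting an axiom of arithmetic, so the implication holds vacuously there. In the case $x = c(a, y)$, we have $Sn = |x| = S|y|$, hence $n = |y|$ by injectivity of the successor (an axiom of $\eha$), and we can take the witnesses $a$ and $y$, so that $x = c(a, y) \land |y| = n$ holds as required.

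I do not anticipate any serious obstacle here: the statement is essentially a reformulation of the sequence axiom $\SA$ combined with the recursion equations for $|\cdot|$ and the standard arithmetical facts that $Sn \neq 0$ and that $S$ is injective, all of which are available in $\ehastar$. The only point requiring a little care is making sure the case analysis from $\SA$ is applied correctly and that the trivial (vacuous) cases are handled by appeal to the arithmetic axioms rather than to anything about sequences. Everything else is routine equational reasoning.
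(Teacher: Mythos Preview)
Your argument is correct and essentially identical to the paper's own proof: both use the sequence axiom $\SA$ for a case split, the defining equations of $|\cdot|$, and the arithmetic facts $Sn \not= 0$ and injectivity of $S$. One small terminological slip: you announce ``induction on the structure of sequences'', but what you actually (and correctly) carry out is a simple case analysis via $\SA$ --- no induction is needed for this lemma.
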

\begin{proof}
The right-to-left directions hold by definition of the length function $| \cdot|$. So suppose we have an element $x$ of type $\sigma^*$. Then, by the sequence axiom $\SA$, either $x = \et_\sigma$ or there are $a^\sigma, y^{\sigma^*}$ such that $x = c(a, y)$. In the latter case, $|x| = S|y| > 0$, so if $|x| = 0$, then $x = \et_\sigma$. But if $|x| =Sn$, then $x \not= \et_\sigma$ and there are $a, y$ with $x = c(a, y)$ and $|y| = n$.
\end{proof}

\begin{prop}
$\ehastar$ proves the induction schema for sequences:
\[ \varphi(\et_\sigma) \land \forall a^\sigma, y^{\sigma^*} \, ( \, \varphi(y) \to \varphi(c(a, y) \, ) \to \forall x^{\sigma^*} \, \varphi(x). \]
\end{prop}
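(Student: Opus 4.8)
The plan is to reduce the sequence-induction schema to ordinary internal induction on the natural numbers via the length function. First I would assume the hypotheses: $\varphi(\et_\sigma)$ and $\forall a^\sigma, y^{\sigma^*}\,(\varphi(y)\to\varphi(c(a,y)))$. The key idea is to prove the auxiliary internal statement $\psi(n) :\equiv \forall x^{\sigma^*}\,(|x| = n \to \varphi(x))$ by ordinary induction on $n^0$, which is available in $\ehastar$ since $\psi$ is internal whenever $\varphi$ is (the proposition is stated for internal $\varphi$, in line with the paper's convention that lowercase Greek letters denote internal formulas).

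For the base case $\psi(0)$: given $x^{\sigma^*}$ with $|x| = 0$, the previous lemma (part 1) gives $x = \et_\sigma$, and then $\varphi(x)$ follows from $\varphi(\et_\sigma)$ together with the congruence of equality — or, more cleanly, from Lemma~\ref{congruence} applied to $\varphi$. For the induction step, assume $\psi(n)$ and take $x^{\sigma^*}$ with $|x| = Sn$; by part 2 of the previous lemma there are $a^\sigma, y^{\sigma^*}$ with $x = c(a,y)$ and $|y| = n$. Then $\varphi(y)$ holds by $\psi(n)$, so $\varphi(c(a,y))$ holds by the second hypothesis, and $\varphi(x)$ follows by substituting $x = c(a,y)$. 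This gives $\forall n^0\,\psi(n)$, and then for an arbitrary $x^{\sigma^*}$ we instantiate at $n := |x|$ to conclude $\varphi(x)$.

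There is no serious obstacle here — the proof is essentially bookkeeping once the previous lemma is in hand. The only point that deserves a moment's care is making sure the formula $\psi$ on which internal induction is performed is genuinely internal (so that the restricted induction axiom of $\ehastar$ applies): this is fine precisely because $\varphi$ is internal, and the construction of $\psi$ only adds an internal bounded-type quantifier and an internal equality. One should also note that the length function $|\cdot|$ and the constructors $\et_\sigma$, $c$ are all terms of $\Tstar$, so every step stays within the language of $\ehastar$.
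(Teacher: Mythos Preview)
Your proof is correct and follows exactly the paper's approach: prove $\forall n^0\,\forall x^{\sigma^*}\,(|x|=n \to \varphi(x))$ by ordinary induction on $n$, invoking the two parts of the preceding lemma for the base case and the successor step, then instantiate at $n:=|x|$. One minor point: since the proposition is stated in $\ehastar$ (not $\ehaststar$), there is no internal/external distinction and the induction axiom already applies to all formulas of the language, so your discussion of internality---and the appeal to Lemma~\ref{congruence}, which concerns $\ehaststar$---is unnecessary here (congruence of equality is simply an axiom of $\ehastar$).
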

\begin{proof}
Suppose $\varphi(\et_\sigma)$ and $\forall a^\sigma, y^{\sigma^*} \, ( \, \varphi(y) \to \varphi(c(a, y) \, )$. It now follows from the previous lemma that we can prove
\[ \forall n^0 \, \forall x^{\sigma^*} \, ( \, |x| = n \to \varphi(x) \, ) \]
by ordinary induction.
\end{proof}

One consequence is the following useful fact:

\begin{lemma}
$\ehastar$ proves that for any two elements $x^{\sigma^*},y^{\sigma^*}$ we have $|x * y| = |x| + |y|$ and
\begin{displaymath}
\begin{array}{lcll}
(x * y)_i & = & (x)_i & \mbox{if } i < |x|, \\
(x * y )_i & = & (y)_{i-|x|} & \mbox{otherwise}.
\end{array}
\end{displaymath}
Therefore it also proves that $x \preceq_{\sigma} x * y$ and $y \preceq_{\sigma} x * y$.
\end{lemma}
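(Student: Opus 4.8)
The plan is to prove the statement about concatenation by induction on the first sequence $x^{\sigma^*}$, using the sequence-induction principle established in the preceding proposition. Concretely, I would fix $y^{\sigma^*}$ and prove the universally quantified statement in $x$
\[
  |x * y| = |x| + |y| \;\land\; \forall i \, \big( (i < |x| \to (x*y)_i = (x)_i) \land (i \geq |x| \to (x*y)_i = (y)_{i-|x|}) \big)
\]
by the sequence induction schema. The base case $x = \et_\sigma$ is immediate from the defining equation $\et_\sigma *_{\sigma^*} y = y$ together with $|\et_\sigma| = 0$: then $|x*y| = |y| = 0 + |y|$, and since no $i$ satisfies $i < 0$, the only clause to check is $i \geq 0 \to (y)_{i-0} = (y)_i$, which holds trivially.

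For the induction step, I assume the claim for $y'$ (where I rename the tail to avoid clashing with the fixed $y$) and prove it for $c(a, y')$. Here I use $c(a,y') *_{\sigma^*} y = c(a, y' * y)$ from the defining equation for concatenation, the recursion equations $|c(a,x)| = S|x|$ for the length function, and the projection equations $(c(a,x))_0 = a$ and $(c(a,x))_{Sn} = (x)_n$. From these, $|c(a,y')*y| = |c(a,y'*y)| = S|y'*y| = S(|y'| + |y|) = |c(a,y')| + |y|$ by the induction hypothesis on lengths. For the projection part one distinguishes $i = 0$, where $(c(a,y')*y)_0 = (c(a,y'*y))_0 = a = (c(a,y'))_0$ and $0 < |c(a,y')| = S|y'|$, from $i = Sn$, where $(c(a,y')*y)_{Sn} = (y'*y)_n$ and one appeals to the induction hypothesis at index $n$, noting that $Sn < S|y'| \leftrightarrow n < |y'|$ and $S|y'| \leq Sn \leftrightarrow |y'| \leq n$, and that $Sn - S|y'| = n - |y'|$. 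Finally, the two containment statements $x \preceq_\sigma x*y$ and $y \preceq_\sigma x*y$ follow immediately from the index characterization of $\preceq_\sigma$ given in Definition~\ref{def:preorder}: every $(x)_i$ with $i < |x|$ appears as $(x*y)_i$, and every $(y)_j$ appears as $(x*y)_{j+|x|}$ with $j + |x| < |x*y|$.

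I do not anticipate a genuine obstacle here; the only mild care needed is the bookkeeping with the cutoff conventions for the projection function (recall $(x)_i = \mathcal{O}_\sigma$ when $i \geq |x|$), so that the statement is formulated with the correct case split on $i$ versus $|x|$ and the claimed equalities literally hold rather than merely up to the default value. One should also be slightly careful that the induction in the sequence-induction schema is being applied to an internal formula (so that the induction axiom of $\ehastar$ applies), which it is, since all the ingredients — $|\cdot|$, $*$, projections, and $\leq$ on type $0$ — are internal.
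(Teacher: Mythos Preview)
Your proof is correct and follows exactly the approach the paper indicates: the paper's own proof is the single line ``Easy argument using the recursive definitions of $|\cdot|$ and $*$ and the induction schema for sequences,'' and you have simply written out that argument in full. No differences in strategy.
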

\begin{proof}
Easy argument using the recursive definitions of $| \cdot |$ and $*$ and the induction schema for sequences.
\end{proof}

Another consequence is the principle of extensionality for sequences. We will call two elements $x^{\sigma^*},y^{\sigma^*}$ extensionally equal, and write $x =_{e, \sigma^*} y$, if
\[ |x| =_0 |y| \land \forall i < |x| \, ( \, (x)_i =_\sigma (y)_i \, ). \]
\begin{prop} \label{extprincforseq}
$\ehastar$ proves 
\[ \forall x^{\sigma^*},y^{\sigma^*} \, ( \, x=_{e, \sigma^*} y \to x =_{\sigma^*} y \, ). \]
\end{prop}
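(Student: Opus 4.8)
The plan is to argue in $\ehastar$ by the induction schema for sequences established in the preceding proposition, applied to the internal formula
\[ \Phi(x) \;:\equiv\; \forall y^{\sigma^*}\,(\, x =_{e, \sigma^*} y \to x =_{\sigma^*} y \,). \]
Since $\Phi$ is a formula in the language of $\ehastar$, the schema applies without restriction, and it suffices to check the base case $\Phi(\et_\sigma)$ and the step $\forall a^\sigma, x'^{\sigma^*}\,(\Phi(x') \to \Phi(c(a,x')))$.

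For the base case, suppose $\et_\sigma =_{e,\sigma^*} y$. Then in particular $|y| = |\et_\sigma| = 0$, so by the lemma stating $|y| = 0 \leftrightarrow y = \et_\sigma$ we get $y =_{\sigma^*} \et_\sigma$. For the inductive step, assume $\Phi(x')$, fix $a^\sigma$, and suppose $c(a,x') =_{e,\sigma^*} y$. Then $|y| = |c(a,x')| = S|x'|$, so by the lemma characterizing sequences of successor length there are $b^\sigma$ and $y'^{\sigma^*}$ with $y = c(b,y')$ and $|y'| = |x'|$. Comparing $0$th projections, and using $0 < |c(a,x')|$, the defining equations for $(\cdot)_0$ give $a = (c(a,x'))_0 = (c(b,y'))_0 = b$. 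For each $i < |x'|$ we have $Si < S|x'| = |c(a,x')|$, and the recursion equations $(c(a,x'))_{Si} = (x')_i$, $(c(b,y'))_{Si} = (y')_i$ yield $(x')_i = (y')_i$; together with $|x'| = |y'|$ this gives $x' =_{e,\sigma^*} y'$. Applying the induction hypothesis $\Phi(x')$ we obtain $x' =_{\sigma^*} y'$, and since equality is a congruence and $a = b$, it follows that $c(a,x') =_{\sigma^*} c(b,y') = y$, completing the step.

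The argument is routine; the only points requiring any care are the index shift $i \mapsto Si$ when passing from the sequences to their tails, and the observation that, because extensional equality only constrains projections at indices strictly below the common length, the ``default value'' clauses in the definition of $(\cdot)_i$ never interfere. I expect no genuine obstacle.
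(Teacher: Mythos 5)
Your proposal is correct and follows the paper's own argument: sequence induction on $x$, with the base case handled via the lemma $|y| = 0 \leftrightarrow y = \et_\sigma$ and the step via the successor-length characterization $y = c(b,y')$, projection comparison to get $a=b$ and $x' =_e y'$, and the induction hypothesis plus congruence to conclude. You merely spell out details (the index shift and $0$th-projection step) that the paper leaves implicit.
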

\begin{proof}
Proof by sequence induction on $x$.

If $x =_e y$ and $x = \et_\sigma$, then $|y| = |x| = 0$. So $y = \et_\sigma$.

If $x =_e y$ and $x = c(a, x')$, then $|x| = Sn$ where $n = |x'|$. So also $|y| = Sn$ and hence $y = c(b, y')$ for some $b^\sigma, y'^{\sigma^*}$ with $|y'| = n$. Since $x =_e y$, we have $a = b$ and $x' =_e y'$. From the latter we get $x' = y'$ by induction hypothesis, so $x = c(a, x') = c(b, y') = y$.
\end{proof}

\begin{cor}
$\ehaststar$ proves
\[ \forall x^{\sigma^*} \, \st(|x|) \land \forall i < |x| \, \st((x)_i) \to \st(x). \]
\end{cor}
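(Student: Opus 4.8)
The plan is to prove the slightly stronger statement
\[ \forallst n^0 \ \forall x^{\sigma^*} \, \big( \, |x| = n \land \forall i < |x| \, \st((x)_i) \to \st(x) \, \big) \]
by external induction on $n$, and then to deduce the corollary by instantiating $n := |x|$, which is legitimate precisely under the hypothesis $\st(|x|)$. So let $\Phi(n^0)$ abbreviate $\forall x^{\sigma^*} \, ( \, |x| = n \land \forall i < |x| \, \st((x)_i) \to \st(x) \, )$ and apply $\IA^{\st{}}$ to $\Phi$.

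For the base case $\Phi(0)$: if $|x| = 0$ then $x = \et_\sigma$ by the lemma stating $|x| = 0 \leftrightarrow x = \et_\sigma$, and $\st(\et_\sigma)$ holds since $\et_\sigma$ is a closed term of $\Tstar$. For the inductive step, assume $\Phi(n)$ and let $x$ satisfy $|x| = Sn$ and $\forall i < Sn \, \st((x)_i)$. By the lemma stating $|x| = Sn \leftrightarrow \exists a, y \, ( \, x = c(a,y) \land |y| = n \, )$ there are $a^\sigma, y^{\sigma^*}$ with $x = c(a,y)$ and $|y| = n$. From the defining equations of the projection, $(x)_0 = a$ and $(x)_{Si} = (y)_i$ for all $i$; hence $\st(a)$ (taking $i = 0 < Sn$) and $\st((y)_i)$ for every $i < n$ (taking $Si < Sn$). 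The induction hypothesis $\Phi(n)$, applied to $y$, then yields $\st(y)$. Since $c$ is a closed term of $\Tstar$, the $\Tst$-axioms (in particular closure of $\st$ under application) give $\st(c)$, then $\st(ca)$, and then $\st(c(a,y))$, i.e. $\st(x)$, as required. This completes the induction, and instantiating at $n := |x|$ under the hypothesis $\st(|x|)$ gives the corollary.

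The only points requiring care are that $\Phi$ is an \emph{external} formula, so one genuinely needs the external induction axiom $\IA^{\st{}}$ rather than the internal one, and, dually, that the closing instantiation of a $\forallst$-statement at the term $|x|$ is permitted only because $\st(|x|)$ is assumed. Beyond that, no real obstacle is expected; one could alternatively package the induction step as an appeal to Lemma \ref{presstandardness}.3, viewed as a schema over the length, together with the extensionality principle for sequences (Proposition \ref{extprincforseq}), but the direct induction above is just as short.
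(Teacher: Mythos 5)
Your proof is correct, but it follows a genuinely different route from the paper's. The paper derives the corollary directly from the machinery it has just set up: given $\st(|x|)$ and $\st((x)_i)$ for all $i<|x|$, it forms the canonical tuple $x' := \langle (x)_0, \ldots, (x)_{|x|-1} \rangle$, argues that $x'$ is standard via Lemma \ref{presstandardness}, notes $x =_{e,\sigma^*} x'$, and then invokes the extensionality principle for sequences (Proposition \ref{extprincforseq}) together with the first $\Tst$-axiom to conclude $\st(x)$. You instead run an external induction $\IA^{\st{}}$ on the length, using only the basic lemma characterizing $|x|=0$ and $|x|=Sn$, the defining equations of the projections, and the $\Tst$-axioms ($\st$ of the closed constants $\et_\sigma$, $c$, closure under application, and $\st$-congruence), finishing by instantiating the external quantifier at $|x|$, which is legitimate since $\st(|x|)$ is assumed — all steps that check out. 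Your approach buys independence from Proposition \ref{extprincforseq} (the very result the paper appends this corollary to) and makes explicit the external induction that the paper's one-line appeal to Lemma \ref{presstandardness} leaves implicit (its part 3 is a schema in a fixed syntactic length, and part 5 requires a standard $F$, so showing $x'$ standard for a merely standard object-level length $|x|$ itself hides an induction of exactly the kind you perform); the paper's approach, in turn, is shorter given the infrastructure already in place and reuses the canonical-tuple construction rather than reproving standardness from the constructors. Your closing remark that one could instead combine Lemma \ref{presstandardness} with the extensionality principle is essentially the paper's argument.
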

\begin{proof}
Suppose $x^{\sigma^*}$ is a sequence of standard length and all components $(x)_i$ are standard. Then $x' := \langle x_0, \ldots, x_{|x|-1} \rangle$ is also standard (by Lemma \ref{presstandardness}). But $x =_{e, \sigma^*} x'$, so $x = x'$ by extensionality for sequences and $\st(x)$ by extensionality of the standardness predicate.
\end{proof}

\begin{cor}
$\ehaststar$ proves the external induction axiom for sequences:
\[  \Phi(\et_\sigma) \land \forallst a^\sigma, y^{\sigma^*} \, ( \, \Phi(y) \to \Phi(c(a, y) \, ) \to \forallst x^{\sigma^*} \, \Phi(x). \]
\end{cor}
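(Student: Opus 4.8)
The plan is to reduce the external induction axiom for sequences to the external induction axiom $\IA^{\st{}}$ on the natural numbers, by inducting on the length of the sequence. So I would assume $\Phi(\et_\sigma)$ and $\forallst a^\sigma, y^{\sigma^*}\,(\Phi(y)\to\Phi(c(a,y)))$ and apply $\IA^{\st{}}$ to the formula
\[ \Psi(n^0) \;:\equiv\; \forallst x^{\sigma^*}\,(\,|x| = n \to \Phi(x)\,), \]
which is a perfectly legitimate (possibly external) formula with a number variable, so $\IA^{\st{}}$ applies to it.

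For the base case $\Psi(0)$: if $x^{\sigma^*}$ is standard with $|x| = 0$, then $x = \et_\sigma$ by the length-characterisation lemma proved above, and $\Phi(x)$ follows from $\Phi(\et_\sigma)$ together with Lemma~\ref{congruence}. For the induction step I would fix a standard $n$, assume $\Psi(n)$, and take a standard $x^{\sigma^*}$ with $|x| = Sn$; by the same characterisation lemma there are $a^\sigma, y^{\sigma^*}$ with $x = c(a,y)$ and $|y| = n$. The crucial observation is that $a$ and $y$ are then standard: $a = (x)_0$ is standard by Lemma~\ref{presstandardness}.2, while $y$ has standard length $n$ and, since every element of $y$ is an element of $x$, all of its components are standard by Lemma~\ref{elemstsetset}, so $y$ is standard by the corollary above on sequences of standard length with standard components. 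Applying the induction hypothesis $\Psi(n)$ to the standard $y$ yields $\Phi(y)$, and then instantiating $\forallst a, y\,(\Phi(y)\to\Phi(c(a,y)))$ at the standard $a, y$ gives $\Phi(c(a,y))$, i.e.\ $\Phi(x)$ by Lemma~\ref{congruence}; hence $\Psi(Sn)$.

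From $\IA^{\st{}}$ I then obtain $\forallst n^0\,\Psi(n)$, and to conclude: given any standard $x^{\sigma^*}$, its length $|x|$ is standard by Lemma~\ref{presstandardness}.1, so $\Psi(|x|)$ applied to $x$ (using $|x| = |x|$) delivers $\Phi(x)$; this is exactly $\forallst x^{\sigma^*}\,\Phi(x)$.

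The only real obstacle is the step where a standard sequence $x$ is decomposed as $c(a,y)$ and one needs $a$ and $y$ to be standard again, so that both the external induction hypothesis and the external step hypothesis can legitimately be instantiated at them; but this is precisely what Lemma~\ref{presstandardness}, Lemma~\ref{elemstsetset}, and the preceding corollary are set up to supply, so I expect the argument to go through without further difficulty.
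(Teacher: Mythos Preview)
Your proposal is correct and follows essentially the same approach as the paper: the paper also reduces to $\IA^{\st}$ applied to $\forallst x^{\sigma^*}\,(|x|=n\to\Phi(x))$, and explicitly cites the preceding corollary (that a sequence of standard length with standard components is standard) to recover standardness of $a$ and $y$ from the decomposition $x=c(a,y)$. Your write-up simply spells out the details the paper leaves implicit.
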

\begin{proof}
Suppose $\Phi(\et_\sigma)$ and $\forallst a^\sigma, y^{\sigma^*} \, ( \, \Phi(y) \to \Phi(c(a, y) \, )$. The idea now is to prove
\[ \forallst n^0 \, \forallst x^{\sigma^*} \, ( \, |x| = n \to \Phi(x) \, ) \]
by external induction $\IA^{\st}$, using the previous corollary to argue that if  $x = c(a, y)$ and $x^{\sigma^*}$ is standard, then both $a$ and $y$ are standard as well.
\end{proof}

\subsection{Finite sequence application}

The following operations will be crucial for what follows.

\begin{dfn}[Finite sequence application and abstraction]
If $s$ is of type $(\sigma \to \tau^*)^*$ and $t$ is of type $\sigma$, then
\[
         s[t] := (s)_0 (t) *\ldots * (s)_{|s|-1}(t):\tau^*.
\]
For every term $s$ of type $\sigma \to \tau^*$ we set 
\[ \Lambda x^\sigma.s(x):=\langle\lambda x^\sigma . s(x)\rangle: (\sigma \to \tau^*)^*. \]
\end{dfn}

The point is that these two operations act like application and abstraction, for we have
\[ (\Lambda x.s(x))[t] =_{\tau^*} (\lambda x.s(x))(t)=_{\tau^*} s(t). \]
We will often write $\tup s[\tup t]$ and $\Lambda \tup x. \tup t$; in handling these expressions, the same conventions as for ordinary application and abstraction apply (see Section 2.1).

Note that the defining equations for the sequence application and $\Lambda$-abstraction imply that finite sequence application and ordinary application are (provably) interdefinable, in the following sense: $\ehastar$ proves that for every $s: (\sigma \to \tau^*)^*$ there is a $t: \sigma \to \tau^*$ (viz., $t = \lambda x. s[x]$) such that $s[x] = t(x)$ for all $x$, as well as that for every $t: \sigma \to \tau^*$ there is an $s$ of type $(\sigma \to \tau^*)^*$ (viz., $s = \Lambda x. t(x)$) such that $s[x] = t(x)$ for all $x$.

In what follows we will need that one can define recursors $\tup{\mathcal{R}}_{\tup{\rho}}$ for each tuple of types $\tup{\rho}^* = \rho^*_0, \ldots, \rho^*_k$, such that
\begin{eqnarray*}
          \tup{\mathcal{R}}_{\tup{\rho}} (0,\tup{y},\tup{z}) &=_{\tup{\rho}^*} & \tup{y}, \\
           \tup{\mathcal{R}}_{\tup{\rho}} (n+1,\tup{y},\tup{z}) &=_{\tup{\rho}^*} & \tup{z}[n,\tup{\mathcal{R}}_{\tup{\rho}} (n,\tup{y},\tup{z})],
\end{eqnarray*}
(where $y_i$ is of type $\rho^*_i$ and $z_i$ is of type $(0 \to \rho^*_0 \to \ldots \to \rho^*_k  \to \rho^*_i)^*$). Indeed, by letting
\[
      \tup{\mathcal{R}}_{\tup{\rho}} :=\lambda n^0,\tup{y},\tup{z}. \tup{R}_{\tup{\rho}^*}(n,\tup{y},(\lambda \tup{s}^{\tup{\rho}^*}, t^0 . \tup{z} [t,\tup{s}])),
\]
where $\tup{R}_{\tup{\rho}}$ are constants for simultaneous primitive recursion as in~\cite{Kohlenbach08},
we get
\[
          \tup{\mathcal{R}}_{\tup{\rho}} (0,\tup{y},\tup{z}) =_{\tup{\rho}^*} \tup{R}_{\tup{\rho}^*}(0,\tup{y},(\lambda \tup{s}^{\tup{\rho}^*}, t^0 . \tup{z} [t,\tup{s}]))  =_{\tup{\rho}^*} \tup{y}
\]
and
\begin{eqnarray*}
           \tup{\mathcal{R}}_{\tup{\rho}} (n+1,\tup{y},\tup{z}) &=_{\tup{\rho}^*} & \tup{R}_{\tup{\rho}^*}(n+1,\tup{y},(\lambda \tup{s}^{\tup{\rho}^*}, t^0 . \tup{z} [t,\tup{s}])) \\
           &=_{\tup{\rho}^*} & (\lambda \tup{s}^{\tup{\rho}^*}, t^0 . \tup{z} [t,\tup{s}]) (\tup{R}_{\tup{\rho}^*} (n,\tup{y},  (\lambda \tup{s}^{\tup{\rho}^*}, t^0 . \tup{z} [t,\tup{s}])  ),n) \\
           &=_{\tup{\rho}^*} &  \tup{z}[n,\tup{R}_{\tup{\rho}^*} (n,\tup{y},  (\lambda \tup{s}^{\tup{\rho}^*}, t^0 . \tup{z} [t,\tup{s}])  )] \\
           &=_{\tup{\rho}^*} &  \tup{z}[n,\tup{\mathcal{R}}_{\tup{\rho}} (n,\tup{y},\tup{z})].
\end{eqnarray*}
Notice that when compared to the case of the ordinary primitive recursors $\tup{R}_{\tup{\rho}}$ we have switched the order of the arguments of $z$. This is simply to make the realizer for the interpretation of the induction schema nicer.

With respect to the preorder
$\preceq$ from Definition~\ref{def:preorder} the new application is monotone in the first component, in the following sense:
\begin{lemma}\label{le:herbrand:new_application} $\ehastar$ proves
      \begin{enumerate}
         \item If $s^{(\sigma\to\tau^*)^*} \preceq \tilde{s}^{(\sigma\to\tau^*)^*}$, then
                   $s[t] \preceq \tilde{s}[t]$, for all $t^{\sigma}$.
         \item If $s \preceq \tilde{s}$, then $s[\tup{t}] \preceq \tilde{s}[\tup{t}]$ for all $\tup{t}$ of suitable types.
         \item If $\tup{s} \preceq \tup{\tilde{s}}$, then $\tup{s}[\tup{t}] \preceq \tup{\tilde{s}}[\tup{t}]$ for all $\tup{t}$ of suitable types.
    \end{enumerate}
\end{lemma}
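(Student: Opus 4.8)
The plan is to reduce everything to part~(1), since (2) follows by iterating (1) componentwise over the tuple $\tup t$ and (3) follows by applying (2) to each pair $s_j \preceq \tilde s_j$. So I would concentrate on proving: if $s \preceq \tilde s$ with $s, \tilde s$ of type $(\sigma\to\tau^*)^*$, then $s[t] \preceq \tilde s[t]$ for every $t^\sigma$.

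Unfolding the definition of finite sequence application, $s[t] = (s)_0(t) * \ldots * (s)_{|s|-1}(t)$, and similarly for $\tilde s[t]$. The hypothesis $s \preceq \tilde s$ means (by Definition~\ref{def:preorder}) that $\forall i < |s|\, \exists j < |\tilde s|\, (s)_i =_{\sigma\to\tau^*} (\tilde s)_j$. The goal $s[t] \preceq \tilde s[t]$ means $\forall u^\tau\, (u \in s[t] \to u \in \tilde s[t])$. So I would take an arbitrary $u$ with $u \in s[t]$; by the lemma on iterated concatenation (the one computing $(x*y)_i$, extended to $n$-fold concatenations by sequence induction, together with the characterisation $u \in x*y \leftrightarrow u \in x \lor u \in y$), there is some $i < |s|$ with $u \in (s)_i(t)$. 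By the $\preceq$-hypothesis pick $j < |\tilde s|$ with $(s)_i =_{\sigma\to\tau^*} (\tilde s)_j$; then by the extensionality axiom for functions (applied at $t$) we get $(s)_i(t) =_{\tau^*} (\tilde s)_j(t)$, hence $u \in (\tilde s)_j(t)$, and therefore $u \in \tilde s[t]$ since $(\tilde s)_j(t)$ is one of the concatenands making up $\tilde s[t]$. This establishes $s[t] \preceq \tilde s[t]$.

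The one genuinely non-trivial ingredient is the fact that $u \in (F(0) * \ldots * F(n-1))$ if and only if $u \in F(k)$ for some $k < n$; this is proved by induction on $n$ using the already-established lemma that $x \preceq x*y$ and $y \preceq x*y$ (equivalently, $u \in x*y \leftrightarrow u \in x \lor u \in y$, which follows from the formula for $(x*y)_i$). Everything else is a matter of unwinding definitions, so I expect this small lemma about membership in iterated concatenations — which I would either prove inline by sequence/ordinary induction or simply cite as an obvious consequence of the preceding results — to be the only place where any real (though still routine) work happens. Note that the entire argument takes place in $\ehastar$; no use of the $\st$-predicate or external quantifiers is needed, consistent with the statement of the lemma.
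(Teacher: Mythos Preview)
Your proposal is correct and follows essentially the same approach as the paper's proof: reduce to part~(1), unfold the definition of $s[t]$ as an iterated concatenation, locate a given element (or index) inside one of the concatenands $(s)_i(t)$, use the hypothesis $s \preceq \tilde s$ to match $(s)_i$ with some $(\tilde s)_j$, and conclude via extensionality that the element reappears in $\tilde s[t]$. The only cosmetic difference is that the paper argues with projection indices (showing each $(s[t])_i$ equals some $(\tilde s[t])_n$) whereas you argue with the element-based form of $\preceq$; since Definition~\ref{def:preorder} records both formulations as equivalent, this is the same argument.
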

\begin{proof} We will only prove the first point, as the other two are similar. Let $i < |s[t]|$, and consider $(s[t])_i$. Since
$s[t] =_{\tau^*} (s)_0(t) * \ldots * (s)_{|s|-1}(t)$ there is $k < |s|$ and $m < |(s)_k (t)|$ such that
\[
(s[t])_i =_{\tau^*} ((s)_k (t))_m.
\]
Since $s\preceq \tilde{s}$ there is $j < |\tilde{s}|$ such that $(s)_k =_{\sigma\to\tau^*} (\tilde{s})_j$. Thus $m < |(\tilde{s})_j(t)|$ and
\[
      (s[t])_i =_{\tau^*} ((\tilde{s})_j (t))_m,
\]
and so since $\tilde{s}[t] =_{\tau^*} (\tilde{s})_0(t) * \ldots * (\tilde{s})_{|\tilde{s}|-1}(t)$ there is some
$n < |\tilde{s}[t]|$ such that
\[
       (\tilde{s}[t])_n =_{\tau^*} ((\tilde{s})_j (t))_m =_{\tau^*} (s[t])_i.
\]
\end{proof}

\begin{lemma} $\ehaststar$ proves \[         \st^{(\sigma\to\tau^*)^*}(x) \land \st^{\sigma}(y) \to \st^{\tau^*} (x[y]) \] and \[ \st^{\sigma \to \tau^*}(s) \to \st^{(\sigma \to \tau^*)^*}(\Lambda x^\sigma. s(x)). \]
\end{lemma}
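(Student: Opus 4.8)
The plan is to reduce everything to the $\Tst$-axioms together with Lemma~\ref{presstandardness}, exactly as in the proofs of the earlier standardness-preservation lemmas: both $x \mapsto x[y]$ and $s \mapsto \Lambda x.s(x)$ are assembled from the constants and recursors of $\Tstar$ and from the sequence operations $|\cdot|$, $(\cdot)_{(\cdot)}$, $*$ and $\langle\cdot\rangle$, each of which preserves standardness.

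For the first conjunct, recall that by definition $x[y] = (x)_0(y) * \ldots * (x)_{|x|-1}(y)$, i.e.\ $x[y]$ is the $|x|$-fold concatenation $G(0) * \ldots * G(|x|-1)$ of the function $G := \lambda i^0.(x)_i(y)$ of type $0 \to \tau^*$. Now assume $\st(x)$ and $\st(y)$. Since the projection operation $(\cdot)_{(\cdot)}$ is a term of $\Tstar$, the function $G$ is the result of applying the closed $\Tstar$-term $\lambda x^{(\sigma\to\tau^*)^*}, y^{\sigma}, i^0.(x)_i(y)$ to $x$ and $y$, so two applications of the third $\Tst$-axiom give $\st(G)$; moreover $\st(|x|)$ holds by Lemma~\ref{presstandardness}.1. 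Hence $\st(x[y]) = \st\big(G(0) * \ldots * G(|x|-1)\big)$ by Lemma~\ref{presstandardness}.5.

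For the second conjunct, recall $\Lambda x^\sigma.s(x) := \langle \lambda x^\sigma.s(x)\rangle$. By the axiom of extensionality for functions $\lambda x^\sigma.s(x) =_{\sigma\to\tau^*} s$, and since the $\st$-predicate is extensional (first part of the $\Tst$-axiom), $\st(s)$ yields $\st(\lambda x^\sigma.s(x))$; applying Lemma~\ref{presstandardness}.3 to the one-element list $\langle \lambda x^\sigma.s(x)\rangle$ then gives $\st(\Lambda x^\sigma.s(x))$.

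There is no real obstacle here; the proof is a routine bookkeeping exercise in $\Tstar$-definability. The only place where a little care is needed is the step $\st(s) \Rightarrow \st(\lambda x.s(x))$ in the second conjunct: rather than manipulating $\lambda x.s(x)$ directly we pass through extensionality of $=_{\sigma\to\tau^*}$ and of the $\st$-predicate. (In a combinatory formulation of $\Tstar$ one could alternatively observe that $\lambda x.s(x)$ is literally a closed combinator applied to $s$ and conclude directly from the third $\Tst$-axiom.)
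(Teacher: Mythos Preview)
Your proof is correct and follows exactly the approach the paper has in mind; the paper's own proof is simply the one-line remark ``Follows from Lemma~\ref{presstandardness}'', and your argument spells out precisely how that lemma is applied (item~5 for $x[y]$, item~3 for $\Lambda x.s(x)$, together with the $\Tst$-axioms). Your care in justifying $\st(s)\Rightarrow\st(\lambda x.s(x))$ via extensionality is appropriate and is the only step beyond a direct invocation of that lemma.
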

\begin{proof} Follows from Lemma \ref{presstandardness}.
\end{proof}

\section{Nonstandard principles}

Semantic approaches to nonstandard analysis exploit the existence of nonstandard models of the first-order theory of the natural numbers or the reals. In fact, one may use the compactness theorem for first-order logic or the existence of suitable nonprincipal ultrafilters to show that there are extensions of the natural numbers, the reals or any other first-order structure one might be interested in, that are \emph{elementary}: that is, satisfy the same first-order sentences, even when allowing for parameters from the original structure. For the natural numbers, for instance, this means that there are structures ${}^*\NN$ and embeddings $i: \NN \to {}^*\NN$ that satisfy
\[ {}^*\NN \models \varphi(i(n_0), \ldots, i(n_k)) \Longleftrightarrow \NN \models \varphi(n_0, \ldots, n_k) \]
for all first-order formulas $\varphi(x_0, \ldots, x_k)$ and natural numbers $n_0, \ldots, n_k$. Usually, one identifies the elements in the image of $i$ with the natural numbers and calls these the \emph{standard} natural numbers, while those that do not lie in the image of $i$ are the \emph{nonstandard} natural numbers. Sometimes, one adds a new predicate $\st$ to the structure ${}^*\NN$, which is true only of the standard natural numbers. One can then use the elementarity of the embedding to show that ${}^*\NN$ is still a linear order in which the nonstandard natural numbers must be infinite (i.e., bigger than any standard natural number). The charm and power of nonstandard proofs is that one can use these infinite natural numbers to prove theorems in the nonstandard structure ${}^*\NN$, which must then be true in $\NN$ as well, as the embedding $i$ is elementary. The same applies to nonstandard extensions ${}^*\RR$ of the reals, in which there are besides infinite reals, also infinitesimals (nonstandard reals having an absolute value smaller than any positive standard real): these infinitesimals can then be used to prove theorems in analysis in ${}^*\RR$; again, one can then go on to use the elementarity of the embedding to show that they must hold in $\RR$ as well. The catch is that only \emph{first-order, internal} statements can be lifted in this way: so using nonstandard models requires some understanding about what can and what can not be expressed in first-order logic as well as some careful verifications as to whether formulas are internal.

Besides creating an interesting world in which there are infinite natural numbers and infinitesimals, nonstandard analysis also comes with some new proof principles, among which the following are the most important:
\begin{enumerate}
\item Overspill: if $\varphi(x)$ is internal and holds for all standard $x$, then $\varphi(x)$ also holds for some nonstandard $x$.
\item Underspill: if $\varphi(x)$ is internal and holds for all nonstandard $x$, then $\varphi(x)$ also holds for some standard $x$.
\item Transfer: an internal formula $\varphi$ (possibly with standard parameters) holds in ${}^*\NN$ iff it holds in $\NN$.
\end{enumerate}
Of course, transfer expresses the elementarity of the embedding. The other two principles are consequences of the fact that it is impossible to define standardness in ${}^*\NN$ using an internal formula: for if one could, then $\varphi(n)$ would hold in ${}^*\NN$, and hence in $\NN$, for every natural number $n$. This would imply that $\forall x \, \varphi(x)$ holds in $\NN$ and hence in ${}^*\NN$ as well; but that contradicts the existence of nonstandard elements in ${}^*\NN$.

In the remainder of this section, we will discuss these principles in more detail, for two related reasons. First of all, they will provide us with three benchmarks with which we will be able to measure the success of the different interpretations. Also, because they have some nontrivial consequences (especially in the intuitionistic context), discussing these will give us some important clues as to how any interpretation of nonstandard analysis will have to look like.

\begin{remark}
Unless we state otherwise, the principles we will subsequently introduce in this paper may have additional parameters besides those explicitly shown. Also recall that we follow Nelson's convention in using small Greek letters to denote internal formulas and capital Greek letters to denote formulas which can be external.
\end{remark}

\subsection{Overspill}

When formalised in $\ehaststar$, overspill (in type 0) is the following statement:
\[ \OS_0: \forallst x^0 \, \varphi(x) \to \exists x^0 \, ( \, \lnot \st(x) \land \varphi(x) \, ). \]

\begin{prop} {\rm \cite{palmgren98}}
In $\ehaststar$, the principle $\OS_0$ implies the existence of nonstandard natural numbers,
\[ \ENS_0: \exists x^0 \, \lnot \st(x), \]
as well as:
\[ \LLPO_0: \forallst x^0, y^0 \, ( \, \varphi(x) \lor \psi(y) \, ) \to \forallst x^0 \, \varphi(x) \lor \forallst y^0 \, \psi(y) . \]
\end{prop}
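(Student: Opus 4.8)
The plan is to derive the two consequences separately, each by a suitable instance of $\OS_0$.

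For $\ENS_0$, I would apply $\OS_0$ to the trivially true internal formula $\varphi(x) :\equiv (x = x)$. Since $\forallst x^0 \, (x = x)$ holds outright, $\OS_0$ yields $\exists x^0 \, ( \, \lnot \st(x) \land x = x \, )$, which is just $\ENS_0$. This step is essentially immediate.

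For $\LLPO_0$, suppose $\forallst x^0, y^0 \, ( \, \varphi(x) \lor \psi(y) \, )$; I want to conclude $\forallst x^0 \, \varphi(x) \lor \forallst y^0 \, \psi(y)$. The idea is to instantiate one of the two free variables by a nonstandard number obtained from $\ENS_0$ (which we just showed follows from $\OS_0$). Fix some $\omega^0$ with $\lnot \st(\omega)$. Then for every standard $x$ we have, taking $y := \omega$ in the hypothesis, $\varphi(x) \lor \psi(\omega)$; but we cannot yet separate the disjunction uniformly in $x$, since intuitionistically the choice of disjunct may depend on $x$. To get around this, I would instead use overspill on an internal formula that packages the disjunction. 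Consider
\[ \chi(n) :\equiv \forall x \leq n \, \varphi(x) \lor \forall y \leq n \, \psi(y), \]
which is internal (the bounded quantifiers are internal, and the outermost disjunction is between internal formulas). I claim $\forallst n^0 \, \chi(n)$: indeed, for standard $n$, both $\forall x \leq n \, \varphi(x)$ and $\forall y \leq n \, \psi(y)$ range over standard numbers only (standardness is closed downwards, by the lemma proved earlier in the excerpt), so by the hypothesis $\forallst x, y \, (\varphi(x) \lor \psi(y))$, together with the fact that $\lnot(\lnot \forall x \le n\,\varphi(x) \land \lnot \forall y \le n\,\psi(y))$ forces one of the bounded conjunctions — here one must be a little careful, since intuitionistically $\forall x \le n\,\varphi(x) \lor \forall y \le n\,\psi(y)$ does not follow from $\forall x,y \le n\,(\varphi(x)\lor\psi(y))$ in general, but it does follow by a finite case distinction over the bounded range, which is available because $\leq n$ is decidable and we may reason by (internal, quantifier-free) induction on the finitely many cases. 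Granting $\forallst n \, \chi(n)$, apply $\OS_0$ to get a nonstandard $\nu$ with $\chi(\nu)$, i.e.\ $\forall x \leq \nu \, \varphi(x) \lor \forall y \leq \nu \, \psi(y)$. Since $\nu$ is nonstandard, every standard number is $\leq \nu$ (again by downward closure of standardness, contrapositively), so $\forall x \leq \nu \, \varphi(x)$ implies $\forallst x \, \varphi(x)$ and likewise on the other side; hence $\forallst x \, \varphi(x) \lor \forallst y \, \psi(y)$, as required.

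The main obstacle is the intuitionistic subtlety flagged above: passing from $\forall x, y \leq n \, (\varphi(x) \lor \psi(y))$ to $\forall x \leq n \, \varphi(x) \lor \forall y \leq n \, \psi(y)$ for standard $n$. This is where the boundedness is essential — over a finite initial segment one can do a genuine finite case analysis (if $\varphi$ fails at some $x_0 \leq n$ then $\psi$ must hold at every $y \leq n$, and this implication is provable by bounded search since $\leq n$ is decidable), whereas the unbounded version would be a genuine instance of $\LLPO$ and is exactly what we are trying to prove. I would want to write this finite case distinction out carefully, as it is the only nonroutine point; everything else reduces to downward closure of $\st^0$ and a direct appeal to $\OS_0$.
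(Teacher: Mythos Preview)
Your overall architecture for $\LLPO_0$ is exactly the paper's: define $\chi(n):\equiv \forall x\le n\,\varphi(x)\lor\forall y\le n\,\psi(y)$, establish $\forallst n\,\chi(n)$, apply $\OS_0$ to obtain $\chi(\nu)$ for some nonstandard $\nu$, and use that every standard number lies below $\nu$ to conclude. The $\ENS_0$ part is identical to the paper's.

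There is, however, a genuine gap at the point you yourself flag as ``the only nonroutine point''. Your proposed justification for $\forallst n\,\chi(n)$ --- ``if $\varphi$ fails at some $x_0\le n$ then $\psi$ must hold at every $y\le n$, and this implication is provable by bounded search since $\le n$ is decidable'' --- does not deliver the disjunction intuitionistically. The implication $\exists x_0\le n\,\neg\varphi(x_0)\to\forall y\le n\,\psi(y)$ is indeed provable, but to pass from it to $\forall x\le n\,\varphi(x)\lor\forall y\le n\,\psi(y)$ you would need $\forall x\le n\,\varphi(x)\lor\exists x_0\le n\,\neg\varphi(x_0)$, and that is decidability of $\varphi$, which you do not have: $\varphi$ is an arbitrary internal formula. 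Decidability of $\le$ is irrelevant here, and the phrase ``quantifier-free induction'' is also out of place, since $\varphi,\psi$ need not be quantifier-free.

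The paper closes this gap by external induction on $n$. The induction step needs only the intuitionistically valid distributivity $(A\lor B)\land(A\lor C)\to A\lor(B\land C)$: from the hypothesis one gets $\varphi(n{+}1)\lor\psi(y)$ for each standard $y\le n{+}1$, hence (by a short inner induction on $y$) $\varphi(n{+}1)\lor\forall y\le n{+}1\,\psi(y)$; combining with the symmetric statement and the induction hypothesis yields $\chi(n{+}1)$. No decidability of $\varphi$ or $\psi$ is used anywhere. If you rewrite your ``finite case distinction'' as this induction, your proof becomes correct and coincides with the paper's.
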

\begin{proof}
$\ENS_0$ follows trivially from $\OS_0$ by taking for $\varphi(x)$ some trivially true formula (for instance, $x = x$).

If $\forallst x^0, y^0 \, ( \, \varphi(x) \lor \psi(y) \, )$, then one can use external induction to prove:
\[ \forallst n^0 \, ( \, \forall m \leq n \, \varphi(m) \lor \forall m \leq n \, \psi(m) \, ). \]
By applying overspill to this statement, we see that it holds for some nonstandard $n$. Using external induction again, we can prove that a nonstandard natural number must be bigger than any standard natural number. Hence $\forallst x^0 \, \varphi(x) \lor \forallst y^0 \, \psi(y)$.
\end{proof}

Of course, overspill can be formulated for all types:
\[ \OS: \forallst x^\sigma \, \varphi(x) \to \exists x^\sigma \, ( \, \lnot \st(x) \land \varphi(x) \, ). \]
But, actually, the interpretations that we will discuss will not only verify this principle, but also a far-reaching generalization of it, \emph{viz.} a higher-type version of Nelson's idealization principle \cite{nelson77}:
\[ \I: \forallst x^{\sigma^*} \exists y^\tau \forall x' \in_{\sigma} x \, \varphi(x', y) \to \exists y^\tau \forallst x^\sigma \, \varphi(x, y). \]

\begin{prop} {\rm \cite{palmgren98}} In $\ehaststar$, the idealization principle $\I$ implies overspill, as well as the statement that for every type $\sigma$ there is a nonstandard sequence containing all the standard elements of that type:
\[ \USEQ: \exists y^{\sigma^*} \, \forallst x^\sigma \, x \in_{\sigma} y. \]
\end{prop}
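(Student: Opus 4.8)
The plan is to obtain both conclusions from single, carefully chosen instances of $\I$.

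For $\USEQ$ at a type $\sigma$, I would apply $\I$ with $\tau:=\sigma^*$ and the internal formula $\varphi(x',y):\equiv x'\in_\sigma y$ (with $x'$ of type $\sigma$ and $y$ of type $\sigma^*$). The hypothesis of $\I$ then reads $\forallst x^{\sigma^*}\,\exists y^{\sigma^*}\,\forall x'\in_\sigma x\,(x'\in_\sigma y)$, i.e.\ $\forallst x^{\sigma^*}\,\exists y^{\sigma^*}\,(x\preceq_\sigma y)$, which is trivially true: take $y:=x$ and use reflexivity of $\preceq_\sigma$. The conclusion of $\I$ is then exactly $\exists y^{\sigma^*}\,\forallst x^{\sigma}\,(x\in_\sigma y)$, which is $\USEQ$; no further work is needed here.

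For overspill at a type $\sigma$, I would assume $\forallst x^{\sigma}\,\varphi(x)$ and apply $\I$ with $\tau:=\sigma$ and the formula $\varphi'(x',y):\equiv \varphi(y)\land x'\neq y$, which is internal since $\varphi$ is, where $x'$ and $y$ have type $\sigma$. The hypothesis of $\I$ becomes $\forallst x^{\sigma^*}\,\exists y^{\sigma}\,\forall x'\in_\sigma x\,(\varphi(y)\land x'\neq y)$, that is, $\forallst x^{\sigma^*}\,\exists y^{\sigma}\,(\varphi(y)\land y\notin_\sigma x)$. To verify this, fix a standard $x^{\sigma^*}$; by Lemma~\ref{presstandardness} its length $|x|$ is standard and by Lemma~\ref{elemstsetset} all of its elements are standard, so it is enough to exhibit a \emph{standard} element $y^{\sigma}$ not occurring in $x$, since then $\varphi(y)$ holds by the assumption $\forallst x^{\sigma}\,\varphi(x)$. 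Finally, the conclusion of $\I$ is $\exists y^{\sigma}\,\forallst x^{\sigma}\,(\varphi(y)\land x\neq y)$; since $\mathcal{O}_\sigma$ is standard, instantiating $\forallst x^{\sigma}$ at $\mathcal{O}_\sigma$ extracts $\varphi(y)$, while dropping $\varphi(y)$ gives $\forallst x^{\sigma}\,(x\neq y)$, which implies $\lnot\st(y)$ (otherwise, instantiating at $x:=y$ would give $y\neq y$). Hence $\exists y^{\sigma}\,(\lnot\st(y)\land\varphi(y))$, i.e.\ $\OS$.

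The only non-routine point is the middle step: given a standard finite sequence $x^{\sigma^*}$, produce a \emph{standard} element of type $\sigma$ not occurring in $x$. The hard part will be to carry this out uniformly for \emph{every} $\sigma\in\Tpstar$, i.e.\ to show that $\ehastar$ proves each type to be infinite in a canonical fashion. Concretely, I would show by induction on $\sigma$ that there is a closed term $t_\sigma$ of $\Tstar$, of type $\sigma^*\to\sigma$, with $\ehastar\vdash\forall x^{\sigma^*}\,(t_\sigma(x)\notin_\sigma x)$: for $\sigma=0$ send $x$ to $\max\{(x)_i:i<|x|\}+1$; for $\sigma=\rho^*$ send $x$ to a sequence of copies of $\mathcal{O}_\rho$ whose length exceeds $|(x)_i|$ for every $i<|x|$; and for $\sigma=\rho\to\tau$, using the inductive hypotheses for $\rho$ and $\tau$, take $|x|$ pairwise distinct points of $\rho$ and let $t_{\rho\to\tau}(x)$ diagonalize against $(x)_0,\ldots,(x)_{|x|-1}$ at those points. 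Since $t_\sigma$ is a closed term, the $\Tst$-axioms give $\st(t_\sigma(x))$ whenever $x$ is standard, so $\ehaststar\vdash\forallst x^{\sigma^*}\,\existsst y^{\sigma}\,(y\notin_\sigma x)$, which is precisely what the verification of the hypothesis of $\I$ in the overspill case requires. (This ingredient is not needed for $\USEQ$.)
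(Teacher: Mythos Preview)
Your proof takes exactly the paper's route: apply $\I$ with $\varphi(x',y):\equiv x'\in_\sigma y$ for $\USEQ$ and with $\varphi(x',y):\equiv \psi(y)\land x'\neq y$ for $\OS$. The paper's proof is a two-line hint giving only these choices of $\varphi$; your verification of the hypothesis of $\I$ in the $\OS$ case---reducing it to the existence, for each standard finite sequence, of a standard element of type $\sigma$ not occurring in it---is precisely the detail the paper leaves to the reader.

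One small technical caution about your type-induction for that infinitude lemma. For $\sigma=\rho\to\tau$ your phrasing (``take $|x|$ pairwise distinct points of $\rho$ and diagonalize at those points'') suggests defining a function by cases on equality in $\rho$, but equality at higher types is not decidable in $\ehastar$, so such a case split need not yield a term of $\Tstar$. A cleaner construction, using only the inductive hypothesis for $\tau$, is
\[
t_{\rho\to\tau}(x)\ :=\ \lambda z^\rho.\; t_\tau\big(\langle (x)_0(z),\ldots,(x)_{|x|-1}(z)\rangle\big).
\]
Then for every $z$ and every $i<|x|$ one has $t_{\rho\to\tau}(x)(z)\neq (x)_i(z)$, hence $t_{\rho\to\tau}(x)\neq (x)_i$ by extensionality. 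With this adjustment your argument is complete.
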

\begin{proof}
$\OS$ for an internal formula $\psi(y)$ follows from $\I$ by taking $\varphi(x, y) \, :\equiv \, y \not= x \land \psi(y),$ while $\USEQ$ follows by taking
$\varphi(x, y) \, :\equiv \, x \in_{\sigma} y.$
\end{proof}

\begin{prop}\label{prop:LLPO} In $\ehaststar$, the idealization principle $\I$ implies the existence of nonstandard elements of any type,
\[ \ENS: \exists x^\sigma \, \lnot \st(x), \]
as well as $\LLPO$ for any type:
\[ \LLPO: \forallst x^\sigma, y^\sigma \, ( \, \varphi(x) \lor \psi(y) \, ) \to \forallst x^\sigma \, \varphi(x) \lor \forallst y^\sigma \, \psi(y) . \]
\end{prop}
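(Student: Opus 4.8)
The plan is to treat the two assertions separately, $\ENS$ being almost immediate and $\LLPO$ the substantive part. For $\ENS$ at type $\sigma$, I would invoke what has already been shown, namely that $\I$ implies overspill $\OS$: instantiating $\OS$ (at type $\sigma$) with the internal formula $\varphi(x):\equiv x=_\sigma x$, whose premise $\forallst x^\sigma\,(x=x)$ is trivially true, yields $\exists x^\sigma\,(\lnot\st(x)\land x=x)$, i.e.\ $\ENS$. (Alternatively one applies $\I$ directly with $\varphi(x',y):\equiv x'\neq y$; its premise $\forallst x^{\sigma^*}\,\exists y^\sigma\,\forall x'\in_\sigma x\,(x'\neq y)$ holds because every type in $\Tpstar$ is provably infinite in $\ehastar$ --- by recursion on the type one builds a term assigning to each finite sequence an element not occurring in it --- and the conclusion $\exists y^\sigma\,\forallst x^\sigma\,(x\neq y)$ forces $\lnot\st(y)$, since $\st(y)$ together with the instance $x:=y$ would give $y\neq y$.)

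For $\LLPO$ the idea is to reduce the principle to a \emph{finite, internal} instance that $\I$ can quantify over. The first step is an internal lemma, provable already in $\ehastar$: for every $u^{\sigma^*}$,
\[
   \forall i<|u|\,\forall j<|u|\,\big(\varphi((u)_i)\lor\psi((u)_j)\big)\;\longrightarrow\;\big(\forall i<|u|\,\varphi((u)_i)\big)\lor\big(\forall j<|u|\,\psi((u)_j)\big).
\]
This ``finite $\LLPO$'' is valid \emph{intuitionistically}, being nothing but iterated distributivity of $\lor$ over bounded universal quantification (equivalently over $\land$): from $\forall j<|u|\,(\varphi((u)_i)\lor\psi((u)_j))$ one obtains $\varphi((u)_i)\lor\forall j<|u|\,\psi((u)_j)$, and then distributes once more over the index $i$. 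Formally it is proved by ordinary induction on a bound for $|u|$; crucially the formula being inducted on is a disjunction/conjunction of bounded internal formulas, hence internal, so the internal induction axiom of $\ehastar$ applies.

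Now assume the premise $\forallst x^\sigma, y^\sigma\,(\varphi(x)\lor\psi(y))$ of $\LLPO$. If $u^{\sigma^*}$ is standard then all its elements are standard by Lemma~\ref{elemstsetset}, so the premise yields $\forall i<|u|\,\forall j<|u|\,(\varphi((u)_i)\lor\psi((u)_j))$, whence the finite $\LLPO$ lemma gives $(\forall x'\in_\sigma u\,\varphi(x'))\lor(\forall x'\in_\sigma u\,\psi(x'))$. Setting $\chi(x',w):\equiv(w=0\to\varphi(x'))\land(w\neq0\to\psi(x'))$ for $w$ of type $0$ --- an internal formula --- this disjunction is exactly $\exists w^0\,\forall x'\in_\sigma u\,\chi(x',w)$. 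We have thereby verified the premise $\forallst u^{\sigma^*}\,\exists w^0\,\forall x'\in_\sigma u\,\chi(x',w)$ of idealization $\I$ (with the existential witness of type $0$), so $\I$ delivers $\exists w^0\,\forallst x^\sigma\,\chi(x,w)$. Fixing such a $w$ and deciding whether $w=0$ (decidability of equality to $0$ at type $0$ follows from the successor axioms), in the case $w=0$ we obtain $\forallst x^\sigma\,\varphi(x)$ and otherwise $\forallst x^\sigma\,\psi(x)$, giving $\forallst x^\sigma\,\varphi(x)\lor\forallst y^\sigma\,\psi(y)$ in either case.

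The one real obstacle is the finite $\LLPO$ lemma: one must notice that, although $\LLPO$ is itself genuinely non-constructive, its bounded/finite version holds intuitionistically by distributivity alone, and that it can be phrased so that only internal induction is needed. Once that is in place, the role of $\I$ is precisely to upgrade ``for each standard finite set the correct disjunct can be chosen'' to ``one choice of disjunct works for all standard elements simultaneously'' --- which is exactly the combinatorics idealization encapsulates, and is why the witness in the application of $\I$ may be taken of type $0$ (we are only guessing which disjunct to commit to).
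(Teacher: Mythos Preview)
Your proof is correct. Both arguments rest on the same underlying insight---that the bounded/finite version of $\LLPO$ holds intuitionistically by distributivity---but you package it differently from the paper. The paper proves, by \emph{external} sequence induction on $u$, the statement
\[
\forallst u^{\sigma^*}\,\exists v^{\sigma^*}\,\forall u'\in u\,\big(u'\in v\land(\forall v'\in v\,\varphi(v')\lor\forall v'\in v\,\psi(v'))\big)
\]
(taking $v:=u$), and then applies $\I$ with the existential witness of type $\sigma^*$; the resulting $v$ contains every standard element, so one reads off $\LLPO$. You instead isolate finite $\LLPO$ as a purely internal lemma (provable by internal induction), encode the choice of disjunct by a type-$0$ witness $w$ via $\chi(x',w)$, and apply $\I$ with witness type~$0$. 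Your route is a bit more economical---you only idealize over a boolean, not over a whole sequence---and makes explicit that the finite lemma needs no external induction. The paper's route has the side benefit of essentially reproving $\USEQ$ along the way. Your handling of $\ENS$ (via $\OS$, which $\I$ already implies) matches the paper's ``obvious'', and your alternative direct argument via infinitude of each type is also fine, though it requires a small type recursion you only sketch.
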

\begin{proof}
The first statement is obvious, so we concentrate on the second.

Suppose $\forallst x^\sigma, y^\sigma \, ( \, \varphi(x) \lor \psi(y) \, )$. Then one easily proves by external sequence  induction and by taking $v := u$ that
\[ \forallst u^{\sigma^*} \, \exists v^{\sigma^*} \, \forall u' \in u \, \big( \, u' \in v \land \, ( \, \forall v' \in v \, \varphi(v') \lor \forall v' \in v \, \psi(v') \, ) \, \big). \]
By applying idealization to this statement we obtain
\[ \exists v^{\sigma^*} \, \forallst u^\sigma \, \big( \, u \in v \land \, ( \, \forall v' \in v \, \varphi(v') \lor \forall v' \in v \, \psi(v') \, ) \, \big), \]
from which $\LLPO$ follows.
\end{proof}

Classically, idealization is equivalent to its dual, which we have dubbed the \emph{realization principle} (intuitionistically, things are not so clear):
\[ \R: \forall y^\tau \existsst x^\sigma \, \varphi(x, y) \to \existsst x^{\sigma^*} \forall y^\tau \exists x' \in x \, \varphi(x', y). \]
As it turns out, both our interpretations will eliminate this principle as well. Actually, both interpretations for constructive nonstandard analysis eliminate the stronger \emph{nonclassical realization principle}:
\[ \NCR: \forall y^\tau \existsst x^\sigma \, \Phi(x, y) \to \existsst x^{\sigma^*} \forall y^\tau \exists x' \in x \, \Phi(x', y), \]
where $\Phi(x, y)$ can be any formula. This is quite remarkable, as $\NCR$ is incompatible with classical logic (hence the name) in that one can prove:

\begin{prop}
In $\ehaststar$, the nonclassical realization principle $\NCR$ implies the undecidability of the standardness predicate:
\[ \lnot \forall x^\sigma \, ( \, \st(x) \lor \lnot \st(x) \, ). \]
\end{prop}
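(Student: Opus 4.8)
The plan is to refute decidability of standardness from $\NCR$ by deriving a contradiction from the conjunction of $\NCR$ with $\forall x^0\,(\st(x)\lor\lnot\st(x))$. I would apply $\NCR$ to the external formula
\[ \Phi(x^0,y^0)\ :\equiv\ \lnot\st(y)\lor x=_0 y . \]
The whole point of choosing this $\Phi$ is that its premise for $\NCR$, namely $\forall y^0\,\existsst x^0\,\Phi(x,y)$, is provable \emph{using} decidability of $\st$, and this is the only place decidability is used: given $y^0$, decide whether $\st(y)$; if $\st(y)$ then $y$ itself is a standard witness since $\Phi(y,y)$ holds (because $y=_0 y$); if $\lnot\st(y)$ then $0$ is a standard witness (standard by $\Tst$) since the first disjunct of $\Phi(0,y)$ holds.

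Next I would unpack what $\NCR$ delivers: a standard $x^{0^*}$ with $\forall y^0\,\exists x'\in_0 x\,\Phi(x',y)$. Instantiating at a standard $y$, from $\st(y)$ we get $\lnot\lnot\st(y)$, so the disjunct $\lnot\st(y)$ is refuted and hence $y=_0 x'$ for the witnessing $x'\in_0 x$ — this is intuitionistically fine, since in the case $\lnot\st(y)$ we simply derive $\bot$ and then $x'=_0 y$ by ex falso. So we obtain a standard finite sequence $x$ with $\forallst y^0\,(y\in_0 x)$, i.e.\ one containing every standard natural number.

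To finish I would exhibit a standard natural number that is not an element of $x$, contradicting the previous line. Put $N:=1+\sum_{i<|x|}(x)_i$. Since $x$ is standard, $|x|$ is standard by Lemma~\ref{presstandardness}, and $N$ is obtained from $x$ by applying closed $\Tstar$-terms (a list recursor computing the sum, followed by the successor), so $N$ is standard by the $\Tst$-axioms; on the other hand $\ehastar$ proves $(x)_i<N$ for every $i<|x|$, whence $N\notin_0 x$. Applying $\forallst y^0\,(y\in_0 x)$ to the standard number $N$ gives $N\in_0 x$, a contradiction. This yields $\lnot\forall x^0\,(\st(x)\lor\lnot\st(x))$, and for a general type $\sigma$ one runs the same argument with $\Phi(x^\sigma,y^\sigma):\equiv\lnot\st(y)\lor x=_\sigma y$, replacing the final step by a diagonalization that produces a standard object of type $\sigma$ outside the standard list $x$ (a sequence longer than all of its entries when $\sigma$ is a sequence type; a ``$1+\sum$'' term formed at the ground output type when $\sigma$ is a function type).

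The step I expect to be the main obstacle is not any one calculation but the systematic \emph{standardness bookkeeping}: one must check that each witness appearing in the argument — the constant $0$, the witnesses fed into $\NCR$, and the final number $N$ — is genuinely standard, which is exactly what forces the explicit use of the $\Tst$-axioms and Lemma~\ref{presstandardness}, and one must make sure that extracting $x'=_0 y$ from $\Phi(x',y)$ under $\st(y)$ stays intuitionistically valid rather than quietly invoking classical logic.
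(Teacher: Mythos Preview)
Your argument is correct and follows essentially the same route as the paper: assume decidability, feed a formula expressing ``if $y$ is standard then $x=y$'' into $\NCR$, and conclude that all standard elements of type $\sigma$ lie in a single standard finite sequence, which is absurd. The paper uses $\Phi(x,y):\equiv \st(y)\to x=y$ rather than your intuitionistically stronger $\lnot\st(y)\lor x=y$, and it leaves the final ``clearly absurd'' step implicit, whereas you spell it out for type $0$ via $N:=1+\sum_{i<|x|}(x)_i$; these are cosmetic differences, not a different strategy.
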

\begin{proof}
Assume that standardness would be decidable. Then we would have
\[ \forall y^\sigma \existsst x^\sigma \, ( \, \st(y) \to x = y \, ). \]
Applying $\NCR$ to this statement yields:
\[ \existsst x^{\sigma^*} \forall y^\sigma \exists x' \in x \, ( \, \st(y) \to x' = y \, ), \]
which is the statement that there are only finitely many standard elements of type $\sigma$. This is clearly absurd.
\end{proof}

\subsection{Underspill}

Underspill (in type 0) is the following statement:
\[ \US_0: \forall x^0 \, ( \, \lnot \st(x) \to \varphi(x) \, ) \to \existsst x^0 \, \varphi(x). \]
In a constructive context it has the following nontrivial consequence (compare \cite{avigadhelzner02}):

\begin{prop} \label{US_0impliesMP_0}
In $\ehaststar$, the underspill principle $\US_0$ implies
\[ \MP_0: \big( \, \forallst x^0 \, ( \, \varphi(x) \lor \lnot \varphi(x) \, ) \land \lnot \lnot \existsst x^0 \varphi(x) \, \big) \to \existsst x^0 \varphi(x). \]
In particular, $\ehaststar + \US_0 \vdash \lnot \lnot \st^0 (x) \to \st^0(x)$.
\end{prop}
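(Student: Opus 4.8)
The plan is to obtain $\MP_0$ by a single well-chosen application of $\US_0$, and then get the displayed consequence by instantiating $\varphi$.

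First I would assume the hypotheses of $\MP_0$, namely $\forallst x^0(\varphi(x)\lor\lnot\varphi(x))$ and $\lnot\lnot\existsst x^0\varphi(x)$, with $\varphi$ internal. The obvious formula to feed into underspill would be the internal $\exists k\le n\,\varphi(k)$, but one cannot hope to prove that this holds for \emph{every} nonstandard $n$: since $\varphi$ is only assumed decidable on the standard numbers, the strongest thing available at a nonstandard $n$ is the \emph{double negation}. So instead I would apply $\US_0$ to the internal formula $\chi(n):\equiv\lnot\lnot\exists k\le n\,\varphi(k)$. To verify its premise $\forall n^0(\lnot\st(n)\to\chi(n))$, fix a nonstandard $n$ and assume $\lnot\exists k\le n\,\varphi(k)$, i.e.\ $\forall k\le n\,\lnot\varphi(k)$. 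Using comparability of natural numbers together with downward closure of standardness (so that a nonstandard $n$ lies above every standard number, exactly as in the proof that $\OS_0$ implies $\LLPO_0$), every standard $m$ satisfies $m\le n$ and hence $\lnot\varphi(m)$; thus $\forallst m\,\lnot\varphi(m)$, contradicting $\lnot\lnot\existsst x\,\varphi(x)$. Hence $\chi(n)$, and $\US_0$ yields a standard $n^0$ with $\lnot\lnot\exists k\le n\,\varphi(k)$.

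Next I would descend to this standard $n$. Every $k\le n$ is standard by downward closure, so the first hypothesis gives $\forall k\le n\,(\varphi(k)\lor\lnot\varphi(k))$; by a routine internal induction on $n$ this yields $\exists k\le n\,\varphi(k)\lor\forall k\le n\,\lnot\varphi(k)$, i.e.\ decidability of $\exists k\le n\,\varphi(k)$, so the double negation can be removed. A witness $k\le n$ with $\varphi(k)$ is then standard, so $\existsst x\,\varphi(x)$, proving $\MP_0$. Finally, for the special case I would instantiate $\varphi(y):\equiv(y=_0 x)$: decidability of equality on type $0$ gives the first premise $\forallst y\,(y=x\lor y\ne x)$, while by extensionality of the standardness predicate $\existsst y\,(y=x)$ is equivalent to $\st^0(x)$, so the second premise becomes $\lnot\lnot\st^0(x)$ and the conclusion $\st^0(x)$; thus $\ehaststar+\US_0\vdash\lnot\lnot\st^0(x)\to\st^0(x)$.

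The only genuinely delicate point — and the one I would flag as the main obstacle — is precisely the one highlighted above: because $\US_0$ applies to internal formulas only and internal formulas need not be decidable away from the standard numbers, one cannot push $\exists k\le n\,\varphi(k)$ through underspill directly, and must make the detour via $\lnot\lnot$, removing it afterwards at the standard level where the decidability hypothesis finally bites. Everything else is bookkeeping with facts already established in the excerpt (downward closure of $\st^0$, comparability of naturals, decidability of $=_0$, and the equivalence $\existsst y\,(y=x)\leftrightarrow\st^0(x)$).
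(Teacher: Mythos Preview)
Your proof is correct and follows essentially the same route as the paper's. The only cosmetic difference is that you apply $\US_0$ to $\lnot\lnot\exists k\le n\,\varphi(k)$ while the paper uses the intuitionistically equivalent $\lnot\forall k\le n\,\lnot\varphi(k)$; your treatment of the final step (removing the double negation at a standard bound via decidability) and of the special case is the same as the paper's, only spelled out in more detail.
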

\begin{proof}
We reason in $\ehaststar + \US_0$. Suppose $\forallst x^0 \, ( \, \varphi(x) \lor \lnot \varphi(x) \, )$ and $\lnot \lnot \existsst x^0 \varphi(x)$. \

Since the latter is intuitionistically equivalent to $\lnot \forallst x^0 \lnot \varphi(x)$, this means that for every infinite natural number $\omega$, we have $\lnot \forall x \leq \omega \lnot \varphi(x)$. In other words, we have
\[ \forall \omega^0 \, ( \, \lnot \st(\omega) \to \lnot \forall x \leq \omega \lnot \varphi(x) \, ). \]
So by $\US_0$ we have
\[ \existsst y \, \lnot \forall x \leq y \lnot \varphi(x), \]
which implies $\existsst x \, \varphi(x)$ by decidability of $\varphi(x)$ for standard values of $x$.

As a special case we have $\lnot \lnot \st^0 (x) \to \st^0(x)$, because $\st^0(x)$ is equivalent to $\existsst y^0 \, ( \, x =_0 y \,)$ and equality of objects of type 0 is decidable.
\end{proof}

Also underspill has a direct generalization to higher types:
\[ \US: \forall x^\sigma \, ( \, \lnot \st(x) \to \varphi(x) \, ) \to \existsst x^\sigma \, \varphi(x). \]
A natural question is whether this implies a version of Markov's Principle for all types. Our suspicion is that this is not the case, but we were unable to prove this.

\subsection{Transfer}

Following Nelson \cite{nelson77}, the transfer principle is usually formulated as follows:
\[ \TPA: \forallst \tup t \, ( \, \forallst x \,  \varphi(x, \tup t) \to \forall x \, \varphi(x, \tup t) \, ). \]
(Here, for once, we do not allow parameters: so it is important that $x$ and $\tup t$ include all free variables of the formula $\varphi$.) This is classically, but not intuitionistically, equivalent to the following:
\[ \TPE: \forallst \tup t \, ( \, \exists x \,  \varphi(x, \tup t) \to \existsst x \, \varphi(x, \tup t) \, ), \]
where, once again, we do not allow parameters.

It turns out that interpreting transfer is very difficult, especially in a constructive context (in fact, Avigad and Helzner have devoted an entire paper \cite{avigadhelzner02} to this issue). There are, at least, the following three problems:
\begin{enumerate}
\item Transfer principles together with overspill imply instances of the law of excluded middle, as was first shown by Moerdijk and Palmgren in \cite{moerdijkpalmgren97}. In our setting we have:
\begin{prop}
\begin{enumerate}
\item In $\ehaststar$, the combination of $\ENS_0$ and $\TPA$ implies the law of excluded middle for all internal arithmetical formulas.
\item In $\ehaststar$, the combination of $\USEQ$ and $\TPA$ implies the law of excluded middle for all internal formulas.
\end{enumerate}
\end{prop}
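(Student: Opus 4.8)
The plan is to prove both parts by meta-induction on the structure of the internal formula, in each case exploiting a nonstandard object to turn an unbounded search into a bounded one; the role of $\TPA$ is to trade statements about all \emph{standard} objects for statements about \emph{all} objects. For (a), use $\ENS_0$ to fix $\omega^0$ with $\lnot\st(\omega)$; a short external induction shows $\forallst n^0\,(n\le\omega)$ (if $n\le\omega$ and $n$ is standard, then $n\ne\omega$ by $\st$-extensionality, so $n<\omega$, so $n+1\le\omega$). One then proves, by induction on the internal \emph{arithmetical} formula $\varphi$ with free variables $\tup x$, that $\ehaststar+\ENS_0+\TPA\vdash\forallst\tup x\,(\varphi(\tup x)\lor\lnot\varphi(\tup x))$. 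The atomic case is immediate from decidability of $=_0$ in $\eha$, and the $\land,\lor,\to$-cases are propositional logic applied to the induction hypotheses. For $\varphi(\tup x)\equiv\exists n^0\,\theta(n,\tup x)$ and fixed standard $\tup x$, the induction hypothesis gives $\forallst n\,(\theta(n,\tup x)\lor\lnot\theta(n,\tup x))$, which $\TPA$ — with quantified variable $n$ and the \emph{standard} tuple $\tup x$ as parameters — promotes to $\forall n\,(\theta(n,\tup x)\lor\lnot\theta(n,\tup x))$; an ordinary bounded induction up to $\omega$ then yields $(\exists n\le\omega\,\theta(n,\tup x))\lor(\forall n\le\omega\,\lnot\theta(n,\tup x))$. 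In the first case $\varphi(\tup x)$ holds outright; in the second, $\forallst n\,\lnot\theta(n,\tup x)$ holds because $\omega$ dominates every standard number, so $\TPA$ applied to $\lnot\theta(n,\tup x)$ (again with parameters $\tup x$) gives $\forall n\,\lnot\theta(n,\tup x)$, i.e.\ $\lnot\varphi(\tup x)$. The case $\varphi\equiv\forall n^0\,\theta$ is dual. Since $\varphi\lor\lnot\varphi$ is internal, a final application of $\TPA$, iterated one variable at a time over $\tup x$, removes the $\st$-restriction and delivers $\forall\tup x\,(\varphi(\tup x)\lor\lnot\varphi(\tup x))$, the law of excluded middle for $\varphi$.

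For (b) the scheme is identical, with two changes. First, in a quantifier case for a variable $z$ of type $\sigma$ one replaces $\omega$ by the finite sequence $Y^{\sigma^*}$ furnished by $\USEQ$, which contains every standard object of type $\sigma$: for standard $\tup x$, after promoting the induction hypothesis for $\theta$ to $\forall z\,(\theta(z,\tup x)\lor\lnot\theta(z,\tup x))$ as above, a bounded search over the indices $i<|Y|$ gives $(\exists z\in Y\,\theta(z,\tup x))\lor(\forall z\in Y\,\lnot\theta(z,\tup x))$, and $\forall z\in Y\,\lnot\theta(z,\tup x)$ implies $\forallst z\,\lnot\theta(z,\tup x)$ by $\USEQ$, to which $\TPA$ is applied as before; it is essential here that the \emph{promoted}, all-$z$ version of the induction hypothesis be used, since the elements of the nonstandard sequence $Y$ need not be standard. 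Second, as we no longer work over a decidable ground type, the atomic case needs a preliminary lemma: for every type $\sigma$, $\ehaststar+\USEQ+\TPA\vdash\forall u^\sigma,v^\sigma\,(u=_\sigma v\lor u\ne_\sigma v)$, proved by induction on $\sigma$. For $\sigma=0$ this is a fact of $\eha$; for $\sigma=\rho^*$ it follows from the hypothesis for $\rho$ via the extensionality principle for sequences (Proposition~\ref{extprincforseq}), $u=_{\rho^*}v\leftrightarrow(|u|=|v|\land\forall i<|u|\,(u)_i=_\rho(v)_i)$, through a purely \emph{bounded} search (so neither $\USEQ$ nor $\TPA$ is needed there); and for $\sigma=\rho_1\to\rho_2$ one uses the function-extensionality axiom $u=_\sigma v\leftrightarrow\forall z^{\rho_1}\,(uz=_{\rho_2}vz)$ together with the $\USEQ$-sequence for $\rho_1$ and $\TPA$, running the same two-layer argument as in the main induction (establishing the claim first for standard $u,v$, then transferring).

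The routine parts are the propositional cases and the bounded-search lemmas, which are instances of the internal induction schema of $\ehastar$. The only real subtlety I anticipate is the bookkeeping forced by $\TPA$ being stated without parameters: each invocation must quantify over exactly the free variables of the internal formula it is applied to, and when iterated the formula must stay internal at every stage. This is what dictates the two-layer shape of the argument — the induction produces only the $\st$-restricted statement $\forallst\tup x\,(\varphi\lor\lnot\varphi)$, because the inner transfer step eliminating $\lnot\theta$ needs the ambient parameters $\tup x$ to be standard, while a separate final transfer of the internal formula $\varphi\lor\lnot\varphi$ over all of $\tup x$ lifts this to the unrestricted law of excluded middle. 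The conceptual core, by contrast, is simple and essentially forced by the hypotheses: use the nonstandard witness ($\omega$, resp.\ $Y$) to perform an unbounded search boundedly, and split on whether a witness is found — if so, the positive disjunct follows immediately; if not, one obtains a $\forallst$-negation, which $\TPA$ promotes to a full negation, this last step being where it is crucial that the bound cover all standard objects.
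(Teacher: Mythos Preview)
Your proof is correct and follows essentially the same approach as the paper: induction on the formula using a nonstandard bound ($\omega$ in (a), the $\USEQ$-sequence in (b)) to make the quantifier search finite, then $\TPA$ to remove the $\forallst$ restrictions. The only cosmetic differences are that the paper carries the unrestricted LEM as the induction hypothesis (applying $\TPA$ twice at the end of each quantifier step rather than once to promote the hypothesis and once on $\lnot\theta$), and in (b) it handles higher-type atomics by unfolding $=_\sigma$ via extensionality into a formula with only type-$0$ equalities rather than proving decidability of $=_\sigma$ by a separate type induction---both amount to the same thing.
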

\begin{proof}
Ad (a): assume $\ENS_0$ and $\TPA$. We show that every internal arithmetic formula $\varphi$ is decidable by induction on the number of internal quantifiers in $\varphi$. Atomic formulas are decidable anyway, so the base case is easy.

If $\varphi(x^0, \tup t)$ is an internal formula which is decidable and arithmetical (and with all free variables shown), then one can use internal induction to show
\[ \forall y^0 \, ( \, \exists x \leq y \, \lnot \varphi(x, \tup t) \lor \forall x \leq y \, \varphi(x, \tup t) \, ). \]
Let $\tup t$ be some arbitrary standard value and let $y$ be some infinite natural number $\omega$, using $\ENS_0$. Then we either have $\exists x \leq \omega \, \lnot \varphi(x, \tup t)$ and in particular $\exists x \, \lnot \varphi(x, \tup t)$ or we have $\forall x \leq \omega \, \varphi(x, \tup t)$ and in particular $\forallst x \, \varphi(x, \tup t)$. So:
\[ \forallst \tup t \, ( \, \exists x \, \lnot \varphi(x, \tup t) \lor \forallst x \, \varphi(x, \tup t) \, ). \]
Applying $\TPA$ once we get
\[ \forallst \tup t \, ( \, \exists x \, \lnot \varphi(x, \tup t) \lor \forall x \, \varphi(x, \tup t) \, ) \]
and applying it another time we get
\[ \forall \tup t \, ( \, \exists x \, \lnot \varphi(x, \tup t) \lor \forall x \, \varphi(x, \tup t) \, ). \]
This completes the induction step.

In (b) we argue similarly. First, we use the extensionality principles for functions and sequences to eliminate all equality predicates at higher types in favour of equalities at type 0. This makes all atomic formulas decidable. Since decidable formulas are closed under all propositional connectives, this leaves the case of the quantifiers. So suppose $\varphi(x^\sigma, \tup t)$ is a internal formula which is decidable and let $u$ be a sequence containing all standard elements of type $\sigma$ (using $\USEQ$). Then we have:
\[ \exists u' \in u \, \lnot \varphi(u', \tup t) \lor \forall u' \in u \, \varphi(u', \tup t). \]
In the former case it holds that $\exists x^\sigma \, \lnot \varphi(x, \tup t)$ and in the latter that $\forallst x^\sigma \varphi(x, \tup t)$. And from here the argument proceeds as before.
\end{proof}
\item As Avigad and Helzner observe in \cite{avigadhelzner02}, also the combination of transfer principles with underspill results in a system which is no longer conservative over Heyting arithmetic. More precisely, adding $\US_0$ and $\TPA$, or $\US_0$ and $\TPE$, to $\ehaststar$ results in a system which is no longer conservative over Heyting arithmetic $\HA$. The reason is that there are quantifier-free formulas $A(x)$ such that
\[ \HA \not\vdash \lnot \lnot \exists x \, A(x) \to \exists x \, A(x). \]
Since one can prove a version of Markov's Principle in $\ehaststar + \US_0$, adding either $\TPA$ or $\TPE$ to it would result in a nonconservative extension of $\HA$ (and hence of $\ehastar$). We refer to \cite{avigadhelzner02} for more details.
\item The last point applies to functional interpretations only. As is well-known, in the context of functional interpretations the axiom of extensionality always presents a serious problem and when developing a functional interpretation of nonstandard arithmetic, the situation is no different. Now, $\ehaststar$ includes an internal axiom of extensionality (as it is part of $\ehastar$), but for the functional interpretation that we will introduce in Section 5 that will be harmless. What will be very problematic for us, however, is the following version of the axiom of extensionality: if for two elements $f, g$ of type $\sigma_1 \to (\sigma_2 \to \ldots \to 0))$, we define
\[ f =^{\st} g \, :\equiv \, \forallst x_1^{\sigma_1}, x^{\sigma_2}_2, \ldots \, ( \, f\tup x =_0 g\tup x \, ), \]
then extensionality formulated as
\[ \forallst f \, \forallst x, y \, ( \, x =^{\st} y \to fx =^{\st} fy \, ) \]
will have no witness definable in $\ZFC$. But that means that also $\TPA$ can have no witness definable in $\ZFC$: for in the presence of $\TPA$ both versions of extensionality are equivalent.
\end{enumerate}

One way out of this quandary, which is strongly suggested by the last point and is the route taken in most sources (beginning with \cite{moerdijk95}), is to have transfer not as a principle, but as a \emph{rule}. As we will see, this turns out to be feasible. In fact, we will have two transfer rules (which are not equivalent, not even classically):
\[ \begin{array}{ccc}
\infer[\TRA]{\forall x \, \varphi(x)}{\forallst x \, \varphi(x)} & & \infer[\TRE]{\existsst x \, \varphi(x)}{\exists x \, \varphi(x)}
\end{array} \]
(This time round there are no special requirements on the parameters of $\varphi$.)

\begin{remark}
In this section we have also explored several connections between nonstandard principles. More principles will be introduced below and we will prove one more implication (see Proposition \ref{USfromRHGMP}). We have not tried to determine the precise relationships between these principles -- in particular, we do not know precisely which principles follow and do not follow over $\ehaststar$ from combinations of other principles. In fact, we believe that mapping these connections would be an interesting research project.
\end{remark}
\section{Herbrand realizability}

In this section we will introduce a new realizability interpretation, which will allow us to prove our first consistency and conservation results in the context of $\ehaststar$. Our treatment here will be entirely proof-theoretic; for a semantic approach towards Herbrand realizability, see \cite{berg12}.

\subsection{The interpretation}

The interpretation works by associating to every formula $\Phi(\tup x)$ a formula $\Psi(\tup t,\tup x)$, also denoted by $\tup t \hr \Phi(\tup x)$, where $\tup t$ is a tupe of new variables \emph{all of which are of sequence type}, determined solely by the logical form of $\Phi(\tup x)$. The soundness proof will then involve showing that for every formula $\Phi(\tup x)$ of $\ehaststar$ with $\ehaststar \vdash \Phi(\tup x)$ there is an appropriate tuple $\tup t$ of terms from $\Tstar$ such that $\ehaststar \vdash \tup t \hr \Phi(\tup x)$.

The idea of the interpretation is that we interpret internal quantifers uniformly and that we do not attempt to give them any computational content. In a sense, the only predicate to which we will assign any computational content is the standardness predicate $\st$: to realize $\st^\sigma(x)$, however, it suffices to provide a nonempty finite list of terms of type $\sigma$, one of which will have to be equal to $x$. Therefore to realize a statement of the form $\existsst x^\sigma \, \Phi(x)$ one only needs to provide a finite list $\langle y_0, \ldots, y_n \rangle$ and to make sure that $\Phi(y_i)$ is realized for some $i \leq n$ (which is like giving a Herbrand disjunction; hence the name ``Herbrand realizability'').

The precise definition is as follows:
\begin{dfn}[Herbrand realizability for $\ehaststar$]\label{d:herRel}
\begin{align*}
[]&\hr \phi  & &:\equiv& &\phi\quad \text { for an internal atomic formula $\phi$}, \\
s&\hr \st(x) & &:\equiv& & x \in s, \\
\tup s, \tup t&\hr (\Phi\vee\Psi) & &:\equiv& &\tup s\hr\Phi\vee \tup t\hr\Psi,\\
 \tup s, \tup t & \hr (\Phi \wedge \Psi) & &:\equiv& &\tup {s}\hr\Phi \, \land \, \tup {t}\hr\Psi,\\
\tup s&\hr (\Phi \rightarrow \Psi) & &:\equiv& & \forallst \tup t\ (\ \tup t\hr\Phi
 \rightarrow\tup s \, [\tup t \, ] \hr\Psi),\\
\tup s&\hr \exists x \, \Phi(x) & &:\equiv& & \exists x\ (\tup s\hr\Phi(x)),\\
\tup s&\hr \forall x \, \Phi(x) & &:\equiv& & \forall x\ (\tup s\hr\Phi(x)),\\
s, \tup t &\hr \existsst x \, \Phi(x) & &:\equiv & & \exists s' \in s \, \big( \, \tup t \hr \Phi(s') \, \big),\\
\tup s &\hr \forallst x \, \Phi(x) & &:\equiv& & \forallst x\ \big(\tup s \, [x] \hr \Phi(x)\big).
\end{align*}
\end{dfn}

Before we show the soundness of the interpretation, we first prove some easy lemmas:

\begin{dfn}[The $\exists^{\st{}}$-free formulas]
We call a formula (in the language of $\ehaststar$) $\exists^{\st{}}$-free, if it is built up from atomic formulas (including $\bot$) using the connectives $\wedge$, $\lor$, $\to$ and the quantifiers $\exists x$, $\forall x$ and $\forallst x$. Alternatively, one could say that these are the formulas in which $\st$ and $\existsst$ do not occur. We denote such formulas by $\Phi_{\not\exists^{\st}}$.
\end{dfn}

\begin{lemma}[Interpretation of $\exists^{\st{}}$-free formulas]\label{l:IntOfEFFormulas} All the interpretations $\tup t \hr \Phi(\tup x)$ of formulas $\Phi(\tup x)$ of $\ehaststar$ are $\exists^{\st{}}$-free. In addition, every $\exists^{\st{}}$-free formula is interpreted by itself. Hence the interpretation is idempotent.
\end{lemma}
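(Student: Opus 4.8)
The plan is to prove both assertions by a routine induction on the logical structure of the formula, reading Definition~\ref{d:herRel} clause by clause. The only point requiring a moment's care is the behaviour of the empty tuple under finite sequence application and $\Lambda$-abstraction, which comes up in the clauses for $\to$ and $\forallst$.

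For the first assertion I would begin by recording the immediate closure properties of the class of $\exists^{\st{}}$-free formulas: it contains every internal formula (in particular every internal atomic formula, as well as the formulas $x \in s$ and $s' \in s$ appearing in the clauses for $\st$ and $\existsst$, since by Definitions~\ref{def:element} and~\ref{def:preorder} these are internal), and it is closed under $\wedge$, $\vee$, $\to$, $\exists x$, $\forall x$ and $\forallst x$ by its very definition. With this in hand the induction on $\Phi$ is immediate: the atomic and $\st$ clauses produce internal formulas; the $\vee$, $\wedge$, $\exists x$, $\forall x$ and $\forallst x$ clauses apply the induction hypothesis followed by the corresponding closure property; the $\to$ clause produces $\forallst \tup t\,(\tup t \hr \Phi \to \tup s[\tup t] \hr \Psi)$, which is $\exists^{\st{}}$-free because both $\tup t \hr \Phi$ and $\tup s[\tup t] \hr \Psi$ are by the induction hypothesis and the class is closed under $\to$ and $\forallst$; and the $\existsst$ clause produces $\exists s' \in s\,(\tup t \hr \Phi(s'))$, an internal matrix $s' \in s$ conjoined under an $\exists$ with an $\exists^{\st{}}$-free formula. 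This exhausts the cases.

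For the second assertion I would show, again by induction on the logical structure of an $\exists^{\st{}}$-free formula $\Phi$, that the tuple of realizing variables associated to $\Phi$ is empty and that $[] \hr \Phi$ is literally $\Phi$. The atomic case is the first clause of Definition~\ref{d:herRel}; the propositional connectives and the three quantifiers follow from the induction hypothesis together with the corresponding clauses, once one notes that when all realizing tuples are empty the expressions $\tup s[\tup t]$ and $\Lambda \tup x.\tup s$ again denote the empty tuple and that a $\forallst$ ranging over the empty tuple is vacuous --- so, for instance, $[] \hr (\Phi \to \Psi)$ reduces to $[] \hr \Phi \to [] \hr \Psi$, which is $\Phi \to \Psi$ by the induction hypothesis, and similarly $[] \hr \forallst x\,\Phi(x) \equiv \forallst x\,([] \hr \Phi(x)) \equiv \forallst x\,\Phi(x)$. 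Idempotency is then immediate: for an arbitrary formula $\Phi$ the formula $\tup t \hr \Phi$ is $\exists^{\st{}}$-free by the first assertion, hence is interpreted by itself by the second.

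The argument is essentially bookkeeping, and I expect no real obstacle; the one place a slip is easy is precisely the $\to$- and $\forallst$-clauses, where one must be sure that the degenerate, empty-tuple instances of finite sequence application and abstraction collapse as expected, so that no spurious $\forallst$-quantifier or sequence-typed apparatus survives in $[] \hr \Phi$ when $\Phi$ is already $\exists^{\st{}}$-free.
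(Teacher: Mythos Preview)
Your proposal is correct; the paper itself states this lemma without proof, treating it as routine, and your clause-by-clause induction on Definition~\ref{d:herRel} is exactly the expected argument. Your care with the degenerate empty-tuple instances in the $\to$ and $\forallst$ clauses is appropriate and is the only point where one could slip.
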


The following lemma will be crucial for what follows:

\begin{lemma}[Realizers are provably upwards closed]\label{l:hrealizersext} The formula $\tup t \hr \Phi(\tup x)$ is provably upwards closed in $\tup t$, that is: 
\[ \ehaststar \vdash \tup s\hr\Phi \land \tup s \preceq \tup t \to \tup t\hr\Phi. \]
\end{lemma}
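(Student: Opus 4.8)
The plan is to prove the statement by induction on the logical structure of $\Phi$, just as one proves congruence or extensionality lemmas for such interpretations. Since $\tup t \hr \Phi$ is always a formula whose only occurrences of the tuple $\tup t$ are as arguments of finite sequence application (via the clauses for $\st$, $\existsst$, implication, and $\forallst$), the key tool throughout will be the monotonicity of finite sequence application in its first component, that is, Lemma~\ref{le:herbrand:new_application}, together with the transitivity of $\preceq$ and the fact that $x \in s \land s \preceq t \to x \in t$ (immediate from Definition~\ref{def:preorder}).

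First I would set up the base cases. For an internal atomic $\phi$, $\tup t \hr \phi$ is just $\phi$ with an empty realizer tuple, so there is nothing to prove. For $\st(x)$: $s \hr \st(x)$ is $x \in s$, and if $s \preceq t$ then $x \in t$, which is exactly $t \hr \st(x)$. Next the propositional cases: conjunction and disjunction are handled componentwise, using the induction hypothesis on each conjunct/disjunct and the fact that if $\tup s, \tup t \preceq \tup s', \tup t'$ then $\tup s \preceq \tup s'$ and $\tup t \preceq \tup t'$. For implication, $\tup s \hr (\Phi \to \Psi)$ is $\forallst \tup u\,(\tup u \hr \Phi \to \tup s[\tup u] \hr \Psi)$; given $\tup s \preceq \tup s'$ and a standard $\tup u$ with $\tup u \hr \Phi$, I get $\tup s[\tup u] \hr \Psi$, and since $\tup s[\tup u] \preceq \tup s'[\tup u]$ by Lemma~\ref{le:herbrand:new_application}, the induction hypothesis for $\Psi$ gives $\tup s'[\tup u] \hr \Psi$, as required.

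Then the quantifier cases. The internal quantifiers $\exists x, \forall x$ are trivial: the realizer tuple does not interact with the bound variable, so monotonicity passes straight through the quantifier via the induction hypothesis. For $\existsst x\,\Phi(x)$: $s, \tup t \hr \existsst x\,\Phi(x)$ is $\exists s' \in s\,(\tup t \hr \Phi(s'))$; if $s, \tup t \preceq s', \tup t'$, then $s \preceq s'$, so any witness in $s$ is a witness in $s'$, and $\tup t \hr \Phi(s')$ upgrades to $\tup t' \hr \Phi(s')$ by the induction hypothesis. For $\forallst x\,\Phi(x)$: $\tup s \hr \forallst x\,\Phi(x)$ is $\forallst x\,(\tup s[x] \hr \Phi(x))$; given $\tup s \preceq \tup t$ and standard $x$, Lemma~\ref{le:herbrand:new_application} gives $\tup s[x] \preceq \tup t[x]$, and the induction hypothesis for $\Phi(x)$ converts $\tup s[x] \hr \Phi(x)$ into $\tup t[x] \hr \Phi(x)$.

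I do not expect a genuine obstacle here: every step is a direct application of monotonicity of $[\cdot]$ plus the induction hypothesis, and the statement is really the formal counterpart of the design decision behind Herbrand realizability. The one point requiring a little care is keeping the bookkeeping of tuples straight --- making sure that when the realizer is a tuple $\tup s, \tup t$ split across a binary connective, the hypothesis $\tup s, \tup t \preceq \tup s', \tup t'$ (which by our conventions means the componentwise conjunction) indeed decomposes into $\tup s \preceq \tup s'$ and $\tup t \preceq \tup t'$ --- but this is immediate from the definition of $\preceq$ on tuples in Definition~\ref{def:preorder}. Thus the induction goes through routinely.
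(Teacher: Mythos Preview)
Your proposal is correct and follows exactly the approach the paper takes: induction on the structure of $\Phi$, with the key ingredient being the monotonicity of finite sequence application in its first component (Lemma~\ref{le:herbrand:new_application}) for the $\to$ and $\forallst$ clauses. The paper's own proof is a one-line sketch pointing to precisely this, so your detailed case analysis is a faithful expansion of it.
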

\begin{proof}
By induction on the structure of $\Phi$, using the monotonicity of the new application in the first component in the clauses for $\to$ and $\forallst$.
\end{proof}

\begin{thm}[Soundness of Herbrand realizability]
Let $\Phi$ be an arbitrary formula of $\ehaststar$ and $\Delta_{\not\exists^{\st}}$ be an arbitrary set of $\existsst$-free sentences. Whenever
\[\ehaststar + \Delta_{\not\exists^{\st}} \quad \vdash\quad\Phi(\tup x),\]
then one can extract from the formal proof closed terms $\tup t$ in $\Tstar$, such that
\[\ehaststar + \Delta_{\not\exists^{\st}} \quad\vdash\quad \tup t \hr \Phi(\tup x).\]
\end{thm}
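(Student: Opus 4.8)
The plan is to prove soundness by induction on the derivation of $\Phi(\tup x)$ in $\ehaststar + \Delta_{\not\exists^{\st}}$, using a fixed Hilbert-style (or natural-deduction) axiomatization of intuitionistic logic together with the arithmetical and nonstandard axioms listed in the definition of $\ehaststar$. For each logical rule and each axiom we must exhibit closed terms $\tup t$ of $\Tstar$ realizing the conclusion, built from the realizers supplied by the induction hypothesis for the premises. Throughout, the two workhorses are Lemma~\ref{l:hrealizersext} (realizers are provably upwards closed in their sequence-type arguments) and Lemma~\ref{le:herbrand:new_application} (the new application $\tup s[\tup t]$ is monotone in the first component); these are exactly what make the clause for $\to$ behave well under \emph{modus ponens} and under the combinators.

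First I would treat the purely logical part. For the axioms of intuitionistic propositional and predicate logic one checks that, after unfolding the $\hr$-translation, each becomes an $\exists^{\st}$-free statement provable in $\ehaststar$, with a realizer assembled from $\lambda$-terms and the combinators of $\Tstar$ (lifted to sequence type via $\Lambda$-abstraction and the sequence application $[\cdot]$); here the interdefinability of $[\cdot]$ with ordinary application noted after the definition of finite sequence application lets us mimic the standard Dialectica-style realizers. The key rule is \emph{modus ponens}: from realizers $\tup s \hr (\Phi \to \Psi)$, i.e.\ $\forallst \tup t\,(\tup t \hr \Phi \to \tup s[\tup t] \hr \Psi)$, and $\tup r \hr \Phi$, we want a realizer of $\Psi$; since the realizing terms are \emph{closed}, $\st(\tup r)$ holds by the $\Tst$-axioms, so instantiating gives $\tup s[\tup r] \hr \Psi$, and $\tup s[\tup r]$ is again a closed $\Tstar$-term. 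Generalization (passing from $\Phi$ to $\forall x\,\Phi$ or to $\forallst x\,\Phi$) is handled by the clauses for $\forall$ and $\forallst$: in the external case one takes $\Lambda x.\,(\text{old realizer})$ and uses that $\tup t[x]$ recovers the instance, again invoking the $\Tst$-closure of $\Tstar$-terms together with Lemma~\ref{presstandardness} to keep standardness.

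Next I would go through the non-logical axioms. The internal axioms of $\ehastar$ (equality, extensionality for functions, $\SA$, the defining equations for the recursors and the list recursor, $\Tstar$-conversion) are internal, hence $\exists^{\st}$-free, hence by Lemma~\ref{l:IntOfEFFormulas} interpreted by themselves and realized by $[]$; likewise the $\EQ$-axioms unfold to provable equivalences once one expands $\existsst$ and $\forallst$. The $\Tst$-axioms have the shape $\st(\cdots)\to\st(\cdots)$ and unfold, via the $\st$-clause, to statements about membership in finite sequences that are provable using Lemma~\ref{presstandardness} and the monotonicity lemma; their realizers are simple $\Tstar$-terms that, e.g., send a singleton $\langle t\rangle$ realizing $\st(t)$ to $\langle t\,t'\rangle$ or to an appropriate projection. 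Internal induction is internal and so realized trivially. The genuinely interesting case is external induction $\IA^{\st}$: here the realizer is built from the sequence-type recursor $\tup{\mathcal R}_{\tup\rho}$ constructed in Section~2.6 — indeed the remark there that the argument order of $z$ was chosen ``to make the realizer for the interpretation of the induction schema nicer'' signals precisely this use — and one verifies the recursion equations for $\tup{\mathcal R}$ match the unfolded $\hr$-translation of the induction step, with Lemma~\ref{l:hrealizersext} used to absorb the finite lists produced at successive stages. Finally, each sentence of $\Delta_{\not\exists^{\st}}$ is $\exists^{\st}$-free, hence realized by $[]$ over the theory $\ehaststar + \Delta_{\not\exists^{\st}}$ itself, which is why adding such sentences is harmless.

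The main obstacle, as always with Dialectica-style interpretations, is the implication clause in combination with contraction and with the axioms that duplicate hypotheses: one must check that the ``monotone'' new application really lets a single realizer $\tup s$ serve for \emph{all} realizers $\tup t$ of the antecedent at once, and that upward closure (Lemma~\ref{l:hrealizersext}) propagates through every connective so that merging the finitely many candidate witnesses produced along a proof never breaks realizability. I would isolate this once and for all by first proving the logical axioms of the chosen Hilbert system (so that contraction is packaged into an axiom like $\Phi\to(\Phi\wedge\Phi)$ rather than a rule), reducing the whole induction to: (i) each axiom is realized by an explicit closed term, and (ii) \emph{modus ponens} and generalization preserve realizability — both of which reduce, via the two lemmas and the $\Tst$-closure of closed $\Tstar$-terms, to routine though lengthy verifications that I would not spell out in full.
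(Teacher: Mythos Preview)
Your overall strategy matches the paper's: induction on the derivation, with Lemma~\ref{l:hrealizersext} (upward closure) and Lemma~\ref{le:herbrand:new_application} (monotonicity of the new application) as the two workhorses, the internal axioms and the sentences in $\Delta_{\not\exists^{\st}}$ self-realized because they are $\exists^{\st}$-free, and $\IA^{\st}$ handled via the recursor $\tup{\mathcal R}$. Modus ponens is handled exactly as you say.

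There are, however, two points where your write-up glosses over something that actually needs work.

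\medskip
\textbf{The $\EQ$-axioms are not $\exists^{\st}$-free.} You write that the $\EQ$-axioms ``unfold to provable equivalences once one expands $\existsst$ and $\forallst$'', grouping them with the internal axioms. But recall the definition: a formula is $\exists^{\st}$-free iff neither $\st$ nor $\existsst$ occurs in it. Both $\EQ$-axioms contain $\st(x)$ on the right-hand side, and the $\existsst$-axiom contains $\existsst$ on the left. So Lemma~\ref{l:IntOfEFFormulas} does not apply and you must give explicit realizers. The paper does this: the $\existsst$-equivalence is realized by identities in both directions, while for $\forallst x\,\Phi(x)\leftrightarrow\forall x(\st(x)\to\Phi(x))$ the right-to-left direction is realized by $\Lambda\tup s,x.\,\tup s[\langle x\rangle]$ and the left-to-right direction by $\Lambda\tup s,x.\,\tup s[x_0]*\cdots*\tup s[x_{|x|-1}]$. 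The last one is not completely trivial: a realizer of $\st(x)$ is a sequence $x$ of candidates, and one has to collect the values of $\tup s$ at every element of that sequence and concatenate.

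\medskip
\textbf{The crucial contraction is $A\lor A\to A$, not $\Phi\to\Phi\land\Phi$.} In this interpretation disjunction is treated \emph{nonconstructively}: $\tup s,\tup t\hr(\Phi\lor\Psi)$ just means $\tup s\hr\Phi\ \lor\ \tup t\hr\Psi$. Consequently $\Phi\to\Phi\land\Phi$ is trivial (duplicate the realizer), but $A\lor A\to A$ is not: given $\tup s,\tup t$ with $\tup s\hr A\ \lor\ \tup t\hr A$, you must produce a \emph{single} realizer of $A$ without knowing which disjunct held. The paper's solution is $\Lambda\tup x,\tup y.\,\tup x*\tup y$, and it is precisely here that Lemma~\ref{l:hrealizersext} is indispensable (whichever of $\tup s,\tup t$ realized $A$, it sits below $\tup s*\tup t$). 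Your discussion of ``merging the finitely many candidate witnesses'' is pointing at the right phenomenon, but the axiom you name is the easy one; the genuine use of upward closure is in $\lor$-contraction, not $\land$-duplication.

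Once these two points are filled in, the rest of your sketch lines up with the paper's proof.
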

\begin{proof}
As for the logical axioms and rules, the differences with the usual soundness proof of modified realizability for $\eha$ (as in \cite{Troelstra73} or Theorem 5.8 in~\cite{Kohlenbach08}) are
\begin{enumerate}
\item[(a)] that we require the realizing terms to be closed,
\item[(b)] that we have a nonconstructive interpretation of disjunction, and 
\item[(c)] that we interpret the quantifiers in a uniform fashion.
\end{enumerate}
Therefore one has to make the following modifications:
\begin{enumerate}
\item The contraction axiom $A \lor A \to A$ is realized by $\Lambda \tup x, \tup y. \tup x * \tup y$, using that the collection of realizers is provably upwards closed.
\item The weakening axiom $A \to A \lor B$ is realized by $\Lambda \tup x.\tup x, {\cal O}$.
\item The permutation axiom $A \lor B \to B \lor A$ is realized by $\Lambda \tup x, \tup y.\tup y, \tup x$.
\item The axioms of $\forall$-elimination $\forall x\Phi(x)\rightarrow\Phi(t)$ and $\exists$-introduction $\Phi(t)\rightarrow\exists x\Phi(x)$ are realized by the identity tuple $\Lambda \tup x\ .\ \tup x$.
\item The expansion rule $\frac{A \to B}{A \lor C \to A \lor C}$: if $\tup t \hr A \to B$, then $\Lambda \tup x, \tup y. \tup t[\tup x], \tup y$ is a Herbrand realizer of $A \lor C \to A \lor C$.
\item The $\forall$-introduction rule $\frac{\Phi\rightarrow\Psi(x)}{\Phi\rightarrow\forall x\Psi(x)}$ is interpreted, because $\tup s\hr (\Phi\rightarrow\Psi(x))$ implies $\tup s \hr  (\Phi\rightarrow\forall x\Psi(x))$: for if $\tup t \hr \Phi$ and $\st(\tup t)$, then $\tup s[\tup t]  \hr \Psi(x)$ and therefore $\tup s[ \tup t] \hr  \forall x\Psi(x)$.
\item The $\exists$-introduction rule $ \frac{\Phi(x)\rightarrow\Psi}{\exists x\Phi(x)\rightarrow\Psi}$ is interpreted, because $\tup s \hr (\Phi(x)\rightarrow\Psi)$ implies $\tup s \hr (\exists x\Phi(x)\rightarrow\Psi)$: for if $\tup t \hr \exists x\Phi(x)$ and $\st(\tup t)$, then there is an $x$ such that $ \tup t\hr \Phi(x)$, from which it follows that $\tup s[\tup t] \hr \Psi$. 
\end{enumerate}
The sentences from $\Delta_{\not\exists^{\st}}$ and the axioms of $\ehastar$, including $\SA$ and the defining axioms for equality, successor, combinators and recursion, are $\exists^{\st{}}$-free and therefore realized by themselves. Therefore it remains to show the soundness of the following rules and axioms:
\begin{enumerate}
\item The external quantifier axioms $\EQ$: both directions in $\existsst x \Phi(x) \leftrightarrow  \exists x (\st(x)\wedge\Phi(x))$ are interpreted by the the identity. In $\forallst x \Phi(x) \leftrightarrow   \forall x (\st(x)\rightarrow\Phi(x))$ the right-to-left direction is realized by $\Lambda \tup s, x. \tup s \, [\langle x \rangle]$, while the left-to-right direction is realized by $\Lambda \tup s, x. \tup s[x_0] * \ldots * \tup s[x_{|x|-1}]$.
\item The axiom schemes $\Tst$: the principle $\st(x) \land x = y \to \st(y)$ is realized by the identity, while $\st(t)$ is realized by $\langle t \rangle$. In addition, $\st(f) \land \st(x) \to \st(fx)$ is realized by $\Lambda f, x. \langle f_i(x_j) \rangle_{i < |f|, j < |x|}$. 
\item The induction schema $\IA^{\st{}}$:
suppose $\tup s \hr \Phi(0)$ and $\tup t \hr \forallst n (\Phi(n)\rightarrow\Phi(n+1))$, with $\st(\tup s)$ and $\st(\tup t)$. Then $\ehaststar$ proves by external induction that for standard natural numbers $n$, the term $\tup {\mathcal R} (n,\tup s,\atup t)$ is standard and $\tup {\mathcal R} (n,\tup s,\atup t) \hr \Phi(n)$. Therefore $\Lambda \tup x,\tup y, n\ .\ \tup {\mathcal R} (n,\tup x,\atup y) \hr \IA^{\st{}}$.
\end{enumerate}
\end{proof}

\subsection{The characteristic principles of Herbrand realizability}

In this section we will prove that $\HAC$ (the herbrandized axiom of choice), $\HIP_{\not\exists^{\st}}$ (the herbrandized independence of premise principle for $\exists^{\st}$-free formulas) and $\NCR$ axiomatize Herbrand realizability:
\begin{enumerate}
\item $\HAC$ : \[
     \forallst x \existsst y \, \Phi(x,y) \to \existsst F \forallst x \exists y \in F(x) \,  \Phi (x,y),
            \]
            where $\Phi(x,y)$ can be any formula.
           If $\Phi(x,y)$ is upwards closed in $y$, then this is equivalent to
           \[
               \forallst x \existsst y  \, \Phi(x,y) \to \existsst F \forallst x \, \Phi (x,F(x)).
            \]
\item $\HIP_{\not\exists^{\st}}$: \[
    \big( \Phi \to\existsst y \, \Psi(y)\big)\rightarrow \existsst y \, \big( \Phi \to \exists y' \in y \, \Psi(y')\big),
            \]
            where $\Phi$ has to be an $\existsst$-free formula and $\Psi(y)$ can be any formula.
           If $\Psi(y)$ is upwards closed in $y$, then this is equivalent to
           \[
        \big( \Phi \to\existsst y\Psi(y)\big)\rightarrow \existsst y \, \big(\Phi \to \Psi(y)\big).  
            \]
\item $\NCR$: \[
                \forall x \existsst y \,  \Phi (x,y) \to \existsst y \, \forall x \, \exists y' \in y \, \Phi(x,y') ,
            \]
            where $\Phi(x, y)$ can be any formula.
           If $\Phi(x,y)$ is upwards closed in $y$, then this is equivalent to
           \[
               \forall x \existsst y \, \Phi (x,y) \to \existsst y \forall x  \, \Phi(x,y) .
            \]
\end{enumerate}

\begin{thm}[Characterization theorem for Herbrand realizability]
\qquad 
\begin{enumerate}
\item For any instance $\Phi$ of $\HAC$, $\HIP_{\not\exists^{\st}}$ or $\NCR$, there are closed terms $\tup t$ in $\Tstar$ such that \[ \ehaststar \vdash \tup t \hr \Phi. \]
\item For any formula $\Phi$ of $\ehaststar$, we have
\[ \ehaststar + \HAC + \HIP_{\not\exists^{\st}} +\NCR \vdash \Phi \leftrightarrow \existsst \tup x \, ( \tup x \hr \Phi). \]
\end{enumerate}
\end{thm}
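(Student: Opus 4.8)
The plan is to establish the two parts of the characterization theorem in the usual way one proves characterization theorems for realizability interpretations. For part (1), I would simply compute the Herbrand realizability interpretation of each of the three principles and exhibit explicit realizing terms, checking provability in $\ehaststar$. For $\HAC$, unfolding the clauses for $\forallst$, $\existsst$ and $\to$ should show that the antecedent $\forallst x \existsst y \, \Phi(x,y)$ is realized by a tuple which, applied to a standard $x$, yields a finite list of candidate witnesses together with their realizers; from this one reads off the choice function $F$ (roughly $F = \Lambda x.(\text{first component of the realizer applied to } x)$) and the realizer for $\exists y \in F(x) \, \Phi(x,y)$. For $\HIP_{\not\exists^{\st}}$, the key point is Lemma~\ref{l:IntOfEFFormulas}: since $\Phi$ is $\exists^{\st{}}$-free it is realized by itself, so a realizer of $\Phi \to \existsst y \, \Psi(y)$ is essentially just a function producing, from the trivial realizer of $\Phi$, a finite list of candidates for $y$ — and this is exactly what is needed to realize the conclusion. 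For $\NCR$, the uniform interpretation of the internal $\forall x$ is what does the work: a realizer of $\forall x \existsst y \, \Phi(x,y)$ is, uniformly in $x$, a finite list of candidates, and since it does not depend on $x$ the same list realizes $\existsst y \, \forall x \, \exists y' \in y \, \Phi(x,y')$. In all three cases Lemma~\ref{l:hrealizersext} (upward closure of realizers) is used to patch up the bookkeeping, and Lemma~\ref{presstandardness} to check the realizing terms are standard.

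For part (2), I would argue both implications of $\Phi \leftrightarrow \existsst \tup x \, ( \tup x \hr \Phi)$ by a simultaneous induction on the logical structure of $\Phi$, working in $\ehaststar + \HAC + \HIP_{\not\exists^{\st}} + \NCR$. The atomic and internal-quantifier cases are immediate from the definition (for internal quantifiers one uses that the realizer does not depend on the quantified variable, so the $\existsst$ and the internal quantifier commute trivially). The conjunction and disjunction cases are routine propositional manipulations. The interesting cases are the three that correspond to the three characteristic principles: the clause for $\forallst x \, \Phi(x)$ requires commuting $\existsst \tup s$ past $\forallst x$, which is precisely $\HAC$; the clause for $\Phi \to \Psi$ requires commuting an $\existsst$ out of the conclusion of an implication whose premise is (by Lemma~\ref{l:IntOfEFFormulas}) $\exists^{\st{}}$-free after passing to the interpretation, which is $\HIP_{\not\exists^{\st}}$; and the clause for $\existsst x \, \Phi(x)$, combined with the internal $\forall$ appearing in the interpretation clauses, is where $\NCR$ enters. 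In each case I would invoke the induction hypothesis to replace subformulas by their interpretations, then apply the relevant principle to move the $\existsst$-quantifiers to the front.

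The main obstacle I expect is the implication case of part (2). Unlike modified realizability, here the interpretation of $\Phi \to \Psi$ is $\forallst \tup t \, ( \tup t \hr \Phi \to \tup s[\tup t] \hr \Psi)$, and to prove $\Phi \to \Psi \;\leftrightarrow\; \existsst \tup s \, ( \tup s \hr (\Phi \to \Psi))$ one must (a) use the induction hypothesis on $\Phi$ to turn $\Phi$ into $\existsst \tup t \, (\tup t \hr \Phi)$, (b) push this $\existsst \tup t$ across the implication using $\HIP_{\not\exists^{\st}}$ (legitimate because $\tup t \hr \Phi$ is $\exists^{\st}$-free by Lemma~\ref{l:IntOfEFFormulas}), and (c) use the induction hypothesis on $\Psi$ together with upward closure of realizers (Lemma~\ref{l:hrealizersext}) to absorb the finite lists of candidate realizers of $\Psi$ into a single one via concatenation. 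Getting the direction of $\HIP_{\not\exists^{\st}}$ right, keeping track of which variables the finite lists may depend on, and verifying that the resulting terms can be chosen standard, is the delicate part; the other cases are comparatively mechanical once this template is in place. One should also be slightly careful that the equivalences in part~(2) are stated for all formulas including those with free variables, so the inductive statement must carry the free variables along, and the characteristic principles must be applied in their schematic (parametrized) form.
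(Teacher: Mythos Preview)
Your overall strategy is correct and matches the paper's: compute explicit realizers for part~(1), and for part~(2) argue by induction on the structure of $\Phi$, with implication being the crucial case. Two points in your sketch need straightening out, however.

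First, in the implication case your steps (a)--(c) misassign the roles of the principles. The $\existsst \tup t$ coming from the premise is not handled by $\HIP_{\not\exists^{\st}}$: it becomes $\forallst \tup t$ in front of the whole implication by plain intuitionistic logic. What $\HIP_{\not\exists^{\st}}$ does is push the \emph{conclusion's} $\existsst \tup s$ out past the (now $\exists^{\st}$-free) premise $\tup t \hr \Phi$, using that $\tup s \hr \Psi$ is upwards closed in $\tup s$. This leaves you with $\forallst \tup t\,\existsst \tup s\,(\tup t \hr \Phi \to \tup s \hr \Psi)$, and at this point you still need $\HAC$ (again in its upwards-closed form) to commute the two outer quantifiers and arrive at $\existsst \tup s\,\forallst \tup t\,(\tup t \hr \Phi \to \tup s[\tup t] \hr \Psi)$. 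So both $\HIP_{\not\exists^{\st}}$ and $\HAC$ are used in the implication clause; your step (c) about concatenating lists is not what is going on here.

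Second, $\NCR$ is not used in the $\existsst$ clause but in the \emph{internal} $\forall$ clause: from the induction hypothesis one has $\forall x\,\existsst \tup s\,(\tup s \hr \Phi(x))$, and $\NCR$ together with upward closure is exactly what lets you pull $\existsst \tup s$ past the internal $\forall x$ to reach $\existsst \tup s\,\forall x\,(\tup s \hr \Phi(x))$. The $\existsst$ clause itself is straightforward.
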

\begin{proof}
Soundness of $\HAC$: If $\tup r = s, \tup t$ and $\tup r \hr \forallst x\existsst y \Phi(x,y)$, then for every standard $x$ there is an $s' \in s[x]$ such that $\tup t[x] \hr \Phi(x, s')$. Hence $\langle \lambda x. s[x] \rangle, \tup t \hr \existsst F \forallst x \exists y \in F(x) \,  \Phi (x,y)$. So $\HAC$ is realized by $\Lambda x, \tup y.\langle \lambda z. x[z] \rangle, \tup y$.

Soundness of $\HIP_{\not\exists^{\st}}$: Suppose $\Phi$ is $\existsst$-free, $\tup r = s, \tup t$ and $\tup r \hr \Phi \to \existsst y \Phi(y)$. This means that if $\Phi$ would hold, then there would be an $s' \in s$ such that $\tup t \hr \Psi(s')$. Hence $\langle s \rangle, \tup t \hr \existsst y (\Phi \to \exists y' \in y \Psi(y'))$. So  $\HIP_{\not\exists^{\st}}$ is realized by $\Lambda x, \tup y.\langle x \rangle, \tup y$.

In a similar manner one checks that also $\NCR$ is realized by $\Lambda x, \tup y.\langle x \rangle, \tup y$. This completes the proof of item 1.

Item 2 one proves by induction on the logical structure of $\Phi$. We discuss implication as an illustrative case, as it is by far the hardest, and leave the other cases to the reader. We reason in $\ehaststar + \HAC + \HIP_{\not\exists^{\st}} +\NCR$. By induction hypothesis, we have that $\Phi \leftrightarrow \existsst \tup t (\tup t \hr \Phi)$ and $\Psi \leftrightarrow \existsst \tup s (\tup s \hr \Psi)$ and therefore
\[ \Phi \to \Psi \]
is equivalent to
\[ \existsst \tup t (\tup t \hr \Phi) \to \existsst \tup s (\tup s \hr \Psi), \]
which in turn is equivalent to:
\[ \forallst \tup t \, \big( \, \tup t \hr \Phi \to \existsst \tup s \, ( \, \tup s \hr \Psi \, ) \, \big). \]
Because $\tup t \hr \Phi$ is $\existsst$-free and $\tup s \hr \Psi$ is upwards closed in $\tup s$, we can use $\HIP_{\not\exists^{\st}}$ to rewrite this as:
\[ \forallst \tup t \, \existsst \tup s \, \big( \, \tup t \hr \Phi \to \tup s \hr \Psi \, \big). \]
As $\tup t \hr \Phi \to \tup s \hr \Psi$ is upwards closed in $\tup s$ and finite sequence application and ordinary application are interdefinable, we can use $\HAC$ to see that this is equivalent to:
\[ \existsst \tup s \, \forallst \tup t \, \big( \, \tup t \hr \Phi \to \tup s[\tup t] \hr \Psi \, \big), \]
which is precisely the meaning of $\existsst \tup s \, ( \, \tup s \hr (\Phi \to \Psi) \, )$.
\end{proof}

\begin{thm}[Main theorem on program extraction by $\hr$] Let $\forallst x \existsst y \Phi(x, y)$ be a sentence of $\ehaststar$ and $\Delta_{\not\exists^{\st}}$ be an arbitrary set $\existsst$-free sentences. Then the following rule holds
\begin{align*}
\ehaststar + \HAC + \HIP_{\not\exists^{\st}} + \NCR + \Delta_{\not\exists^{\st}} & \vdash \forallst x \, \existsst y \, \Phi(x, y) \Rightarrow \\
\ehaststar + \HAC + \HIP_{\not\exists^{\st}} + \NCR + \Delta_{\not\exists^{\st}} & \vdash \forallst x \, \exists y \in  t(x) \, \Phi(x, y),
\end{align*}
where $t$ is a closed term from $\Tstar$ which is extracted from the original proof using Herbrand realizability.

In the particular case where both $\Phi(x, y)$ and $\Delta_{\not\exists^{\st}}$ are internal, the conclusion yields
\[ \ehastar + \Delta_{\not\exists^{\st}} \vdash \forall x \,  \exists y \in t(x) \, \Phi(x, y). \]
If we assume that the sentences from $\Delta_{\not\exists^{\st}}$ are not just internal, but also true (in the set-theoretic model), the conclusion implies that $\forall x \,  \exists y \in t(x) \, \Phi(x, y)$ must be true as well.
\end{thm}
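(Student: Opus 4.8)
The plan is to derive this as a direct corollary of the two preceding theorems: the Soundness Theorem for Herbrand realizability (applied to the extended theory with the characteristic principles as part of $\Delta_{\not\exists^{\st}}$) and the Characterization Theorem. First I would observe that $\HAC$, $\HIP_{\not\exists^{\st}}$ and $\NCR$ are realized by closed terms of $\Tstar$ (Characterization Theorem, item 1), so the Soundness Theorem applies with base theory $\ehaststar$ and the side formulas $\Delta_{\not\exists^{\st}}$ together with the (internal, hence $\exists^{\st}$-free) principles; actually it is cleanest to fold $\HAC + \HIP_{\not\exists^{\st}} + \NCR$ into the realized-by-themselves part and run Soundness over $\ehaststar + \HAC + \HIP_{\not\exists^{\st}} + \NCR + \Delta_{\not\exists^{\st}}$ directly. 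Thus from a proof of $\forallst x \, \existsst y \, \Phi(x,y)$ in that theory we extract closed terms $\tup r$ with $\ehaststar + \HAC + \HIP_{\not\exists^{\st}} + \NCR + \Delta_{\not\exists^{\st}} \vdash \tup r \hr \forallst x \, \existsst y \, \Phi(x,y)$.

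Next I would unfold the realizability clause for $\forallst x \, \existsst y \, \Phi(x,y)$. Writing $\tup r = s, \tup u$ where $s$ is the component realizing the outer $\existsst y$ (after the $[x]$-application) and $\tup u$ the realizer of $\Phi$, the clauses give: for every standard $x$, there is $y \in (s[x])$ such that $\tup u[x] \hr \Phi(x,y)$. Setting $t := \lambda x. s[x]$ (a closed term of $\Tstar$ by the interdefinability of finite sequence application and ordinary application), this says $\forallst x \, \exists y \in t(x) \, (\tup u[x] \hr \Phi(x,y))$, and in particular $\forallst x \, \exists y \in t(x) \, \Phi^{\hr}$ where $\Phi^{\hr}$ is implied by $\Phi$ via the Characterization Theorem, item 2 (using that the characteristic principles are available to prove $\Phi \leftrightarrow \existsst \tup x (\tup x \hr \Phi)$, hence $(\tup u[x] \hr \Phi(x,y)) \to \Phi(x,y)$). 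This yields $\ehaststar + \HAC + \HIP_{\not\exists^{\st}} + \NCR + \Delta_{\not\exists^{\st}} \vdash \forallst x \, \exists y \in t(x) \, \Phi(x,y)$, which is the first conclusion.

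For the internal case, I would invoke Proposition \ref{conservativeint}: if $\Phi$ and all of $\Delta_{\not\exists^{\st}}$ are internal, then $\forall x \, \exists y \in t(x) \, \Phi(x,y)$ is internal, and since it is provable in the conservative extension $\ehaststar + \Delta_{\not\exists^{\st}}$ (the characteristic principles being consistent and, by Soundness over the same theory, harmless — more precisely, the internalizations of $\HAC$, $\HIP_{\not\exists^{\st}}$, $\NCR$ are provable in $\ehastar$), it is already provable in $\ehastar + \Delta_{\not\exists^{\st}}$. The final truth claim is then immediate: $\ehastar$ is sound for the full set-theoretic model of functionals, and if every sentence of $\Delta_{\not\exists^{\st}}$ is true there, then so is the proved conclusion.

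The main obstacle is purely bookkeeping: correctly tracking the tuple structure of the realizer $\tup r$ through the nested clauses for $\forallst x$, $\existsst y$, so that the extracted $t$ genuinely has the form $\lambda x. s[x]$ and lands in $\Tstar$; and making sure that the characteristic principles can legitimately be placed on the "realized by themselves / extracted" side of the Soundness Theorem rather than needing a separate consistency argument. Neither is deep — the first is handled by Definition \ref{d:herRel} and the interdefinability remark, the second by item 1 of the Characterization Theorem — but both must be stated carefully.
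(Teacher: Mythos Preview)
Your argument for the first conclusion is essentially the paper's, modulo some terminological slips: the characteristic principles $\HAC$, $\HIP_{\not\exists^{\st}}$, $\NCR$ are neither internal nor $\existsst$-free, so they are not ``realized by themselves'' and cannot be folded into $\Delta_{\not\exists^{\st}}$. The correct justification --- which you do cite --- is Characterization item~1: each instance has an explicit closed realizer, provably in $\ehaststar$. The point you then lose track of is that this last provability is in $\ehaststar$ alone. Plugging these realizers into the soundness induction therefore yields
\[
\ehaststar + \Delta_{\not\exists^{\st}} \vdash r,\tup s \hr \forallst x \, \existsst y \, \Phi(x,y),
\]
verified in the \emph{small} system, not the full one. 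You instead record the realizer only in the full system, which is harmless for the first conclusion but breaks the second.

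For the internal case your proposal has a genuine gap. The claim that the internalizations of $\HAC$, $\HIP_{\not\exists^{\st}}$, $\NCR$ are provable in $\ehastar$ is false: take $\NCR$ with $\Phi(x^0,y^0) :\equiv (x =_0 y)$; its internalization asserts the existence of a finite sequence containing every natural number, which is refutable. So you cannot pass through Proposition~\ref{conservativeint} by internalizing over the full system. The paper's route is different and simpler: when $\Phi$ is internal, the realizer tuple $\tup s$ for $\Phi$ is \emph{empty}, so $\tup s \hr \Phi(x,y)$ is literally $\Phi(x,y)$ and no appeal to Characterization item~2 is needed. Combined with the verification in $\ehaststar + \Delta_{\not\exists^{\st}}$ noted above, one obtains $\ehaststar + \Delta_{\not\exists^{\st}} \vdash \forallst x \, \exists y \in t(x)\, \Phi(x,y)$ directly, and \emph{then} internalizes via Proposition~\ref{conservativeint}.
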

\begin{proof}
If
\begin{align*}
\ehaststar + \HAC + \HIP_{\not\exists^{\st}} + \NCR + \Delta_{\not\exists^{\st}} & \vdash \forallst x \, \existsst y \, \Phi(x, y),
\end{align*}
then the soundness proof yields terms $r, \tup s$ such that
\[ \ehaststar + \Delta_{\not\exists^{\st}} \vdash r, \tup s \hr \forallst x \, \existsst y \, \Phi(x, y). \]
Since $r, \tup s \hr \forallst x \, \existsst y \, \Phi(x, y)$ is by definition $\forallst x \, \exists y \in r[x] ( \, \tup s \hr \Phi(x, y) \,)$, the first statement follows by taking $t = \lambda x. r[x]$.

If both $\Phi(x, y)$ and $\Delta_{\not\exists^{\st}}$ are internal, then $\tup s$ is empty and we get
\[ \ehaststar + \Delta_{\not\exists^{\st}} \vdash \forallst x \,  \exists y \in t(x) \, \Phi(x, y). \]
By internalizing the statement, we obtain
\[ \ehastar + \Delta_{\not\exists^{\st}} \vdash \forall x \,  \exists y \in t(x) \, \Phi(x, y). \]
Since all the axioms of $\ehastar$ are true, this implies that $\forall x \,  \exists y \in t(x) \, \Phi(x, y)$ will be true, whenever $\Delta_{\not\exists^{\st}}$ is.
\end{proof}

\subsection{Discussion}

The main virtue of Herbrand realizability may be that it points one's attention to principles like $\HAC$ and $\NCR$ and that it gives one a simple proof of their consistency. However, as a method for eliminating nonstandard principles from proofs, Herbrand realizability has serious limitations. It does eliminate the realization principle $\R$ (it even eliminates the nonclassical principle $\NCR$), but overspill, the idealization principle $\I$ and the transfer rules are $\exists^{\st}$-free and therefore simply passed to the verifying system. Even worse, the underspill principle $\US_0$ does not have a computable Herbrand realizer:
\begin{prop}
$\MP_0$ and $\US_0$ do not have computable Herbrand realizers.
\end{prop}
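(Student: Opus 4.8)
I would treat $\MP_0$ and $\US_0$ uniformly by reducing both to a single instance. Over $\ehaststar$, both principles prove $\lnot\lnot\st^0(x)\to\st^0(x)$: for $\MP_0$ this is the instance with $\varphi(y):\equiv(x=y)$ noted in the proof of Proposition~\ref{US_0impliesMP_0}, and for $\US_0$ it is part of the statement of that proposition. I would also record that the soundness proof for $\hr$ keeps working if one adds to $\ehaststar$, besides the $\existsst$-free sentences of $\Delta_{\not\exists^{\st}}$, any further axioms that are themselves Herbrand-realized by closed terms of $\Tstar$ --- one simply plugs the given realizers into the leaves of the derivation where such an axiom is invoked. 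Hence, if $\MP_0$ or $\US_0$ had a computable Herbrand realizer, then so would $F(x):\equiv\lnot\lnot\st^0(x)\to\st^0(x)$, and it suffices to show that this is impossible.

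So suppose $\tup u$ is a tuple of closed terms of $\Tstar$ with $\ehaststar\vdash\tup u\hr F(x)$, and unwind Definition~\ref{d:herRel}. The realizing tuple of $\lnot\lnot\st^0(x)$ is empty, and $[]\hr\lnot\lnot\st^0(x)$ is provably equivalent to $\lnot\lnot\st^0(x)$ itself --- here one uses Lemmas~\ref{presstandardness} and~\ref{elemstsetset} to see that $\forallst t^{0^*}\lnot(x\in t)$ is equivalent to $\lnot\st^0(x)$. Consequently $\tup u\hr F(x)$ is equivalent to $\lnot\lnot\st^0(x)\to x\in L$, where $L$ is a closed term of type $0^*$ definable from $\tup u$. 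The crucial point is that $\hr$ interprets the free variable $x$ uniformly and realizers are closed terms, so $L$ does not mention $x$. (Were realizers permitted to depend on $x$, the list $\langle x\rangle$ would realize $F(x)$; it is precisely the insistence on uniform, closed realizers that makes the principle fail.) So a computable realizer of $F(x)$ would yield one fixed closed finite list $L$ of type $0^*$ with $\ehaststar\vdash\lnot\lnot\st^0(x)\to x\in L$, whence $\ehaststar\vdash\forallst x^0\,(x\in L)$.

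This is absurd. Since $|L|$ is the value of a closed term, $\ehaststar\vdash\st^0(|L|)$ by the $\Tst$-axioms, and as standardness of natural numbers is downward closed under $\le$ we obtain $\ehaststar\vdash\forall i\le|L|\,(\st^0(i)\wedge i\in L)$. But $L$ has only the entries $(L)_0,\dots,(L)_{|L|-1}$, so the $|L|+1$ numbers $0,1,\dots,|L|$ cannot all occur among them; this finite pigeonhole fact is provable in $\ehastar$ by internal induction. Hence $\ehaststar\vdash\bot$, contradicting the consistency of $\ehaststar$ (Proposition~\ref{conservativeint}). Therefore $F(x)$, and with it $\MP_0$ and $\US_0$, has no computable Herbrand realizer.

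The main obstacle is the unwinding in the second step: one must correctly identify the realizability clauses governing a doubly-negated standardness formula and, above all, exploit the uniformity of $\hr$ in the free variables, which forces a would-be realizer to collapse to a single finite list that would have to enumerate every standard natural number. The remaining ingredients --- closure of soundness under realized axioms, downward closure of standardness under $\le$, and the finite pigeonhole principle in $\ehastar$ --- are routine.
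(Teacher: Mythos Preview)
Your proof is correct but takes a genuinely different route from the paper's. The paper argues via the halting problem: taking $\varphi(n):\equiv T(e,e,n)$ with Kleene's $T$-predicate, a computable Herbrand realizer for $\forallst e\,\big(\lnot\lnot\existsst n\,T(e,e,n)\to\existsst n\,T(e,e,n)\big)$ would yield, for each $e$, a finite list $t[e]$ such that $e$ halts on $e$ iff some $n\in t[e]$ satisfies $T(e,e,n)$ --- deciding the halting problem. It then disposes of $\US_0$ via the implication $\US_0\Rightarrow\MP_0$. Your argument instead exploits the uniformity of Herbrand realizers in the free variable $x$: a realizer of $\lnot\lnot\st^0(x)\to\st^0(x)$ collapses to a single closed list $L$ of type $0^*$ that would have to contain every standard natural number, and this is refuted by finite pigeonhole.

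Your approach is more elementary --- no recursion theory is needed --- and in fact proves slightly more, since the pigeonhole argument rules out \emph{any} realizer for $F(x)$, computable or not. It also treats $\MP_0$ and $\US_0$ on an equal footing, reducing both to the single instance $F(x)$ rather than chaining one through the other. The paper's approach, by contrast, stays close to the classical story for modified realizability of Markov's principle and makes the role of computability explicit. One small remark: you phrase the final contradiction syntactically, deriving $\ehaststar\vdash\bot$ from a \emph{provably correct} closed-term realizer. The same pigeonhole observation works semantically (in the standard model every $x$ satisfies $\lnot\lnot\st^0(x)$, so $x\in L$ would hold for all $x$), and this reading aligns more directly with the paper's apparently semantic use of ``computable realizer''. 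Either reading suffices.
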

\begin{proof}
It is well-known that there can be no computable function witnessing the modified realizability interpretation of Markov's principle, because its existence would imply the decidability of the halting problem. A similar argument shows that $\MP_0$ does not have a computable Herbrand realizer: Kleene's $T$-predicate $T(e, x, n)$ is primitive recursive and hence 
\[ \ehaststar \vdash \forallst e, x, n \big( \, T(e,x, n) \lor \lnot T(e, x, n) \, \big). \]
So if $\MP_0$ would have a computable realizer, then so would 
\[ \forallst e \, \big( \, \lnot\lnot \existsst n\,  T(e,e, n) \to \existsst n \, T(e, e, n) \, \big). \]
But if $t$ would be such a realizer, we could decide the halting problem by checking $T(e, e, n)$ for all $n \in t[e]$.

Since $\US_0$ implies $\MP_0$ (see Proposition \ref{US_0impliesMP_0}), it follows that $\US_0$ does not have a computable realizer either.
\end{proof}

In the next section, we will show that these problems can be overcome by moving from realizability to, more complicated, functional interpretations.
%
%
%
\section{A functional interpretation for $\ehaststar$}

In this section we will introduce and study a functional interpretation for $\ehaststar$.

\subsection{The interpretation}

The basic idea of the $\D$-interpretation (the nonstandard Dialectica interpretation) is to associate to every formula $\Phi (\tup{a})$ a new formula $\Phi(\tup{a})^{\D}\equiv\existsst \tup{x}\forallst \tup{y} \, \phi_{\D} (\tup{x},\tup{y},\tup{a})$
such that 
\begin{enumerate}
\item all variables in $\tup x$ are of sequence type and
\item $\phi_{\D} (\tup{x},\tup{y},\tup{a})$ is upwards closed in $\tup{x}$.
\end{enumerate}
We will interpret the standardness predicate $\st^{\sigma}$ similarly to the case for Herbrand realizability: For a realizer for the interpretation of $\st^{\sigma}(x)$ we will require a standard finite list $\langle y_0,\ldots,y_n \rangle$ of candidates, one of which must be equal to $x$.

\begin{dfn}[The $\D$-interpretation for $\ehaststar$]
We associate to every formula $\Phi (\tup{a})$ in the language of $\ehaststar$ (with free variables among $\tup{a}$) a formula $\Phi(\tup{a})^{\D}\equiv\existsst \tup{x}\forallst \tup{y} \, \phi_{\D} (\tup{x},\tup{y},\tup{a})$ in the same language (with the same free variables) by:
    \begin{itemize}
      \item[(i)] $\phi(\tup{a})^{\D}:\equiv \phi_{\D}(\tup{a}):\equiv \phi(\tup{a})$ for internal atomic formulas $\phi(\tup{a})$,
      \item[(ii)] $\st^{\sigma}(u^{\sigma})^{\D}:\equiv \existsst x^{\sigma^*} u \in_{\sigma} x$.
    \end{itemize}
Let $\Phi(\tup{a})^{\D}\equiv\existsst \tup{x}\forallst \tup{y} \, \phi_{\D} (\tup{x},\tup{y},\tup{a})$ and $\Psi(\tup{b})^{\D}\equiv\existsst \tup{u}\forallst \tup{v} \, \psi_{\D} (\tup{u},\tup{v},\tup{b})$. Then
    \begin{itemize}
      \item[(iii)] $(\Phi (\tup{a}) \land \Psi (\tup{b}))^{\D} :\equiv \existsst \tup{x},\tup{u} \forallst \tup{y},\tup{v} \, \big(\phi_{\D} (\tup{x},\tup{y},\tup{a}) \land \psi_{\D} (\tup{u},\tup{v},\tup{b})\big),$
      \item[(iv)] $(\Phi (\tup{a}) \lor \Psi (\tup{b}))^{\D}  :\equiv \existsst \tup{x},\tup{u} \forallst \tup{y},\tup{v} \, \big(\phi_{\D} (\tup{x},\tup{y},\tup{a}) \lor \psi_{\D} (\tup{u},\tup{v},\tup{b})\big),$

      \item[(v)] $(\Phi(\tup{a}) \to \Psi(\tup{b}))^{\D}  :\equiv   \existsst \tup{U},\tup{Y} \forallst \tup{x},\tup{v} \, \big(\forall \tup{y} \in \tup{Y}[\tup{x},\tup{v}]\, \phi_{\D} (\tup{x},\tup{y}, \tup{a})    \to\psi_{\D} (\tup{U}[\tup{x}],\tup{v}, \tup{b})\big).$
      \end{itemize}
Let $\Phi(z,\tup{a})^{\D}\equiv\existsst \tup{x}\forallst \tup{y} \, \phi_{\D} (\tup{x},\tup{y},z,\tup{a})$, with the free variable $z$ not occuring among the $\tup{a}$. Then
    \begin{itemize}
 \item[(vi)] $(\forall z\Phi(z,\tup{a}))^{\D} :\equiv \existsst \tup{x} \forallst \tup{y} \forall z \, \phi_{\D} (\tup{x},\tup{y},z,\tup{a}),$
 \item[(vii)] $(\exists z\Phi(z,\tup{a}))^{\D} :\equiv \existsst \tup{x} \forallst \tup{y} \exists z \forall \tup{y'} \in \tup{y}\, \phi_{\D} (\tup{x},\tup{y'},z,\tup{a}),$
 \item[(viii)] $(\forallst z\Phi(z,\tup{a}))^{\D} :\equiv \existsst \tup{X} \forallst z,\tup{y}  \, \phi_{\D} (\tup{X}[z],\tup{y},z,\tup{a}),$
 \item[(ix)] $(\existsst z\Phi(z,\tup{a}))^{\D} :\equiv \existsst \tup{x},z \, \forallst \tup{y} \, \exists z' \in z \, \forall \tup y' \in \tup{y}\, \phi_{\D} (\tup{x},\tup{y'},z',\tup{a}).$
\end{itemize}
\end{dfn}
\begin{dfn}
We say that a formula $\Phi$ is a $\forallst$-formula if $\Phi \equiv \forallst \tup{x}\,  \phi (\tup{x})$, with $\phi (\tup{x})$ internal.
\end{dfn}
\begin{lemma}\label{le:forallst-formulas}
Let $\Phi$ be a $\forallst$-formula. Then $\Phi^{\D}\equiv \Phi$.
\end{lemma}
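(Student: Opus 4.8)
The plan is to prove this by induction on the structure of the $\forallst$-formula $\Phi$, where the relevant notion of structure is how $\Phi$ is built from internal atomic formulas using $\wedge$, $\lor$, $\to$ (with internal antecedent turning into $\forall$ eventually — but actually the cleanest induction is on how $\forallst$-formulas arise). First I would nail down what a $\forallst$-formula really is up to the interpretation: $\Phi \equiv \forallst \tup{x}\, \phi(\tup x)$ with $\phi$ internal. The base observation is that an \emph{internal} formula $\phi$ already satisfies $\phi^{\D} \equiv \phi$ — this follows immediately from clauses (i), (iii), (iv), (v), (vi), (vii) of the $\D$-interpretation, since for internal formulas there are no $\existsst$/$\forallst$ blocks to introduce (in clause (vii) the extra $\forall \tup y' \in \tup y$ is vacuous because $\tup y$ is empty, and similarly (v) collapses since $\tup Y, \tup x, \tup v$ are all empty). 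So I would first record, as a sub-lemma or inline remark, that internal formulas are interpreted by themselves.

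Then the main work is the step for the external universal quantifier: given that $\phi(\tup x)$ is internal, hence $\phi(\tup x)^{\D} \equiv \phi(\tup x)$ with empty $\existsst$- and $\forallst$-blocks, I apply clause (viii). With $\phi_{\D}(\tup{X}[z],\tup y, z, \tup a)$ having empty tuples $\tup X$ and $\tup y$, clause (viii) reads $(\forallst z\, \phi(z,\tup a))^{\D} :\equiv \forallst z\, \phi(z,\tup a)$ — i.e.\ exactly $\Phi$ again. Iterating over the tuple $\tup x$ (formally: a secondary induction on the length of the string of external universal quantifiers, or just observing that the tuple case of (viii) handles $\forallst \tup x$ at once given the conventions of Section 2.1) gives $\Phi^{\D} \equiv \forallst \tup x\, \phi(\tup x) \equiv \Phi$.

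The only thing to be careful about — and the one place that could be called an obstacle, though a mild one — is bookkeeping with the empty tuples: one must check that in clauses (v), (vii), (viii) every construct that would add complexity (the bounded quantifiers $\forall \tup y \in \tup Y[\tup x,\tup v]$, $\forall \tup y' \in \tup y$, and the abstractions $\tup Y[\tup x,\tup v]$, $\tup X[z]$) genuinely degenerates to nothing when the relevant tuples are empty, so that the displayed interpreted formula is \emph{syntactically identical} to $\Phi$ and not merely provably equivalent to it. This is routine given the conventions for empty tuples and for finite-sequence application, so the proof is short: internal formulas are self-interpreting by inspection of the clauses, and prefixing $\forallst$-quantifiers is absorbed by clause (viii). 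I would write it as: "By the clauses of the $\D$-interpretation, every internal formula is interpreted by itself; now apply clause (viii) (for tuples) to conclude."
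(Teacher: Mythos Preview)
Your proposal is correct and is precisely the argument the paper has in mind: the paper's proof consists of the single line ``By induction on the structure of $\Phi$,'' and you have unpacked exactly that induction, first observing that internal formulas are self-interpreting (clauses (i), (iii)--(vii) with empty tuples) and then handling the $\forallst$-prefix via clause (viii). Your attention to the empty-tuple bookkeeping is appropriate and is the only point requiring any care.
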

\begin{proof}
By induction on the structure of $\Phi$.
\end{proof}

Notice that because of the clause for $\existsst z$ the interpretation is not idempotent. Similarly to what is the case for Herbrand realizability it will be crucial that realizers are upwards closed:

\begin{lemma} Let $\Phi (\tup{a})$ be a formula in the language of $\ehaststar$ with interpretation $\existsst \tup{x}\forallst \tup{y} \, \phi_{\D} (\tup{x},\tup{y},\tup{a})$. Then the formula $\phi_{\D} (\tup{x},\tup{y},\tup{a})$ is provably upwards closed in $\tup{x}$, i.e.,
\[
          \ehastar \vdash \phi_{\D} (\tup{x},\tup{y},\tup{a}) \land \tup{x} \preceq \tup{x}' \to  \phi_{\D} (\tup{x}',\tup{y},\tup{a}).
\]

\end{lemma}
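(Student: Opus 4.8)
The plan is to argue by induction on the logical structure of $\Phi$, running through the clauses (i)--(ix) of the $\D$-interpretation; in each clause ``the first argument'' of $\phi_{\D}$ means whatever the existential block of $\Phi^{\D}$ happens to be there. The base cases are immediate: for an internal atomic $\phi(\tup{a})$ the existential block is empty, and for $\st^{\sigma}(u)$ the matrix is literally $u\in_{\sigma}x$, so the claim reduces to $u\in x\land x\preceq x'\to u\in x'$, which is exactly the definition of $\preceq$ (Definition~\ref{def:preorder}). For $\Phi\land\Psi$ and $\Phi\lor\Psi$ the existential block is the concatenation of those of $\Phi$ and $\Psi$, and since $\preceq$ on tuples is componentwise, the statement follows by applying the two induction hypotheses separately. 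For $\forall z\,\Phi$, $\exists z\,\Phi$ and $\existsst z\,\Phi$ the dependence of the matrix on the existential block of $\Phi^{\D}$ sits only underneath internal quantifiers ($\forall z$, $\exists z$, and $\forall\tup{y}'\in\tup{y}$), through which upward closure propagates at once, so the induction hypothesis for $\Phi$ is enough; in the clause for $\existsst z$ there is the extra new sequence variable $z$ bounding $\exists z'\in z$, but upward closure in $z$ is trivial, since if some $z'\in z$ witnesses the matrix and $z\preceq\hat z$ then the same $z'$ lies in $\hat z$. Finally, for $\forallst z\,\Phi$, where $(\forallst z\,\Phi(z,\tup{a}))^{\D}\equiv\existsst\tup{X}\,\forallst z,\tup{y}\,\phi_{\D}(\tup{X}[z],\tup{y},z,\tup{a})$, one uses Lemma~\ref{le:herbrand:new_application}: $\tup{X}\preceq\tup{X}'$ gives $\tup{X}[z]\preceq\tup{X}'[z]$, and then the induction hypothesis (upward closure of $\phi_{\D}$ in its first argument) upgrades $\phi_{\D}(\tup{X}[z],\tup{y},z,\tup{a})$ to $\phi_{\D}(\tup{X}'[z],\tup{y},z,\tup{a})$.

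The implication case is the heart of the proof and the step I expect to be the main obstacle. Here $(\Phi\to\Psi)^{\D}$ has existential block $\tup{U},\tup{Y}$ and matrix
\[ A(\tup{U},\tup{Y}):\equiv\ \forall\tup{y}\in\tup{Y}[\tup{x},\tup{v}]\,\phi_{\D}(\tup{x},\tup{y},\tup{a})\to\psi_{\D}(\tup{U}[\tup{x}],\tup{v},\tup{b}), \]
and the goal is $A(\tup{U},\tup{Y})\land\tup{U}\preceq\tup{U}'\land\tup{Y}\preceq\tup{Y}'\to A(\tup{U}',\tup{Y}')$. Assuming these hypotheses, one assumes the antecedent $\forall\tup{y}\in\tup{Y}'[\tup{x},\tup{v}]\,\phi_{\D}(\tup{x},\tup{y},\tup{a})$ of $A(\tup{U}',\tup{Y}')$ and derives its consequent. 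By Lemma~\ref{le:herbrand:new_application} we have $\tup{Y}[\tup{x},\tup{v}]\preceq\tup{Y}'[\tup{x},\tup{v}]$, so every element of $\tup{Y}[\tup{x},\tup{v}]$ is an element of $\tup{Y}'[\tup{x},\tup{v}]$, and hence the assumed antecedent entails $\forall\tup{y}\in\tup{Y}[\tup{x},\tup{v}]\,\phi_{\D}(\tup{x},\tup{y},\tup{a})$; now $A(\tup{U},\tup{Y})$ yields $\psi_{\D}(\tup{U}[\tup{x}],\tup{v},\tup{b})$. Finally $\tup{U}[\tup{x}]\preceq\tup{U}'[\tup{x}]$, again by Lemma~\ref{le:herbrand:new_application}, and the induction hypothesis for $\Psi$ (upward closure of $\psi_{\D}$ in its first argument) gives $\psi_{\D}(\tup{U}'[\tup{x}],\tup{v},\tup{b})$, which is the consequent of $A(\tup{U}',\tup{Y}')$.

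What makes this case the delicate one is not the calculation itself --- which uses only monotonicity of the new application in its first component (Lemma~\ref{le:herbrand:new_application}) and the induction hypothesis for $\Psi$ --- but getting the polarities right. The contravariant occurrence of $\Phi$ never actually needs its induction hypothesis; instead one has to see that $\tup{Y}$ supplies the finite list of ``test points'' feeding the antecedent of $A$, so that enlarging $\tup{Y}$ strengthens that bounded universal as a hypothesis and therefore weakens $A$ as a whole --- which is precisely the direction of monotonicity being asserted. Everything here is internal, so the induction proceeds at the metalevel while the implications inside each clause are established in $\ehastar$ using Lemma~\ref{le:herbrand:new_application} and Definition~\ref{def:preorder}.
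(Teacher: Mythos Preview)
Your proof is correct and follows exactly the approach the paper indicates: induction on the structure of $\Phi$, with Lemma~\ref{le:herbrand:new_application} supplying the monotonicity of finite-sequence application needed in the clauses for $\to$ and $\forallst$. The paper's own proof is the one-line summary of what you have written out in full; your detailed treatment of the implication case, including the observation that the induction hypothesis for $\Phi$ is not needed there while that for $\Psi$ is, is precisely what lies behind the paper's remark.
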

\begin{proof}
By induction on the structure of $\Phi (\tup{a})$, using Lemma~\ref{le:herbrand:new_application} in the clauses for $\to$ and $\forallst$.
\end{proof}

The $\D$-interpretation will allow us to interpret the nonclassical realization principle $\NCR$, and also both $\I$ and $\HAC$. Additionally we will be able to interpret a herbrandized independence of premise principle for formulas of the form $\forallst x \, \phi(x)$, and also a herbrandized form of a generalized Markov's principle:
\begin{enumerate}
\item $\HIP_{\forallst}$:
\[
           \big( \forallst x \, \phi(x) \to\existsst y\Psi(y)\big)\rightarrow \existsst y \, \big(\forallst x \,  \phi (x)\to \exists y' \in y \, \Psi(y')\big),
            \]
           where $\Psi(y)$ is a formula in the language of $\ehaststar$ and $\phi(x)$ is an internal formula. If $\Psi(y)$ is upwards closed in $y$, then this is equivalent to
           \[
        \big( \forallst x \, \phi(x) \to\existsst y\Psi(y)\big)\rightarrow \existsst y \, \big(\forallst x \,  \phi (x)\to \Psi(y)\big).
\]
\item $\HGMP$:
\[
  ( \forallst x \, \phi(x) \to\psi)\to \existsst x \, \big(\forall x' \in x\,  \phi (x')\to\psi\big),
            \]
           where $\phi(x)$ and $\psi$ are internal formulas in the language of $\ehaststar$. If $\phi(x)$ is downwards closed in $x$, then this is equivalent to
           \[
       ( \forallst x \, \phi(x) \to\psi)\to \existsst x (  \phi (x)\to\psi).
\]
The latter gives us a form of Markov's principle by taking $\psi \equiv 0=_0 1$ and $\phi (x) \equiv \lnot \phi_0 (x)$ (with $\phi_0(x)$ internal and quantifier-free), whence the name.
\end{enumerate}

\begin{thm}[Soundness of the $\D$-interpretation] \label{soundnessDst}
Let $\Phi (\tup{a})$ be a formula of $\ehaststar$ and let $\Delta_{\intern}$ be a set of internal sentences.
If
\[
    \ehaststar + \I + \NCR + \HAC + \HGMP
    +  \HIP_{\forallst} + \Delta_{\intern} \vdash \Phi (\tup{a})
\]
and $\Phi(\tup{a})^{\D}\equiv\existsst \tup{x}\forallst \tup{y} \, \phi_{\D} (\tup{x},\tup{y},\tup{a})$,
then from the proof we can extract closed terms $\tup{t}$ in $\Tstar$ such that
\[
    \ehastar +\Delta_{\intern} \vdash \forall \tup{y} \, \phi_{\D} (\tup{t},\tup{y},\tup{a}).
\]
\end{thm}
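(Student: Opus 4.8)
The plan is to argue by induction on the length of the derivation of $\Phi(\tup a)$ in $\ehaststar + \I + \NCR + \HAC + \HGMP + \HIP_{\forallst} + \Delta_{\intern}$. For each formula $\Phi(\tup a)$ appearing in the derivation, with $\Phi(\tup a)^{\D} \equiv \existsst \tup x \forallst \tup y\, \phi_{\D}(\tup x, \tup y, \tup a)$, the induction hypothesis is that one can read off from its subderivation closed terms $\tup t$ of $\Tstar$, of the types of $\tup x$, such that $\ehastar + \Delta_{\intern} \vdash \forall \tup y\, \phi_{\D}(\tup t, \tup y, \tup a)$ (which, since closed $\Tstar$-terms are standard, also yields $\Phi(\tup a)^{\D}$ back in $\ehaststar$). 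Three facts are used throughout. First, internal formulas --- hence every sentence of $\Delta_{\intern}$, the internal induction schema, $\SA$, and the defining equations for equality, successor, combinators and the (list) recursors --- are interpreted by themselves, so they lie in the verifying system $\ehastar + \Delta_{\intern}$ and realize themselves; likewise $\forallst$-formulas are interpreted by themselves by Lemma~\ref{le:forallst-formulas}. Second, closed $\Tstar$-terms are standard and standardness is preserved under the finite-sequence operations used to form realizers, by the $\Tst$-axioms and Lemma~\ref{presstandardness}. Third, $\phi_{\D}$ is provably upwards closed in $\tup x$ by the preceding lemma, and finite sequence application is monotone in its first component by Lemma~\ref{le:herbrand:new_application}; these are invoked constantly.

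For the logical axioms and rules one essentially reruns the soundness proof for the Diller--Nahm variant of the Dialectica interpretation (cf.\ \cite{dillernahm74,Kohlenbach08}), with two modifications. Implication is interpreted with the new finite sequence application, so \emph{modus ponens} is handled by substituting the witness $\tup t$ for the premise $\Phi$ into the witness $\tup U$ for $\Phi \to \Psi$ and taking $\tup U[\tup t]$ as witness for $\Psi$ --- which works because $\forall \tup y\, \phi_{\D}(\tup t, \tup y)$ in particular yields $\forall \tup y \in \tup Y[\tup t, \tup v]\, \phi_{\D}(\tup t, \tup y)$ --- and $\tup U[\tup t]$ is again a closed $\Tstar$-term. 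The internal quantifiers are interpreted uniformly, so the extracted terms never acquire a dependence on an internal variable and stay closed; $\forall$-elimination, $\exists$-introduction and the four quantifier rules (internal and external) are realized by identity-like terms, the external clauses (viii) and (ix) wrapping a requested witness in a singleton sequence. The one genuinely delicate logical case is contraction: here the Diller--Nahm device of collecting finitely many candidate counterexamples, visible in the bounded quantifier $\forall \tup y \in \tup Y[\tup x,\tup v]$, does the work that decidability of the matrix does in the plain Dialectica interpretation --- two finite counterexample-sequences are merged by concatenation and the two existential witnesses by $*$, upward closedness of $\psi_{\D}$ in its existential argument guaranteeing that the merged witness still works.

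It then remains to realize the non-internal nonlogical axioms: $\EQ$, $\Tst$, $\IA^{\st{}}$, and the five characteristic principles. For $\EQ$ one unfolds both sides of each biconditional and checks that the two $\D$-interpretations are interconvertible by terms built from $\langle\cdot\rangle$, $*$ and $[\cdot]$, as in the corresponding step of the Herbrand realizability soundness proof; for $\Tst$ the three axioms are realized by the identity, by the singleton $\langle t\rangle$ (standard since $t$ is closed), and by $\langle f_i(x_j)\rangle_{i<|f|,\,j<|x|}$ respectively. The external induction axiom $\IA^{\st{}}$ is realized by a term built from the sequence recursors $\tup{\mathcal R}$: given standard realizers of $\Phi(0)$ and of $\forallst n(\Phi(n)\to\Phi(n+1))$, $\ehastar$ proves by internal induction on $n$ that $\tup{\mathcal R}(n, \cdot, \cdot)$ iterates them into a realizer of $\Phi(n)$, with standardness maintained. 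Finally, for $\I$, $\NCR$, $\HAC$, $\HGMP$ and $\HIP_{\forallst}$ one writes down explicit closed realizers; these computations run in parallel to the characterization theorem for Herbrand realizability, the herbrandizing finite sequences absorbing idealization and choice and the Diller--Nahm bounded collection absorbing the Markov- and independence-of-premise-type principles, with the only extra bookkeeping coming from the second ($\forallst \tup y$) layer of the $\D$-interpretation and the non-idempotent clause (ix) for $\existsst$.

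The main obstacle is expected to be the implication case of the logical soundness, and with it contraction: making the Diller--Nahm bounded-collection mechanism cooperate cleanly with the new finite sequence application $[\cdot]$ and with the demand that realizers remain closed and standard, while the interpreted formula is still verified purely internally in $\ehastar + \Delta_{\intern}$. Once that case and the explicit realizers for $\EQ$, $\Tst$, $\IA^{\st{}}$ and the five characteristic principles are in hand, the remaining cases are routine adaptations of the standard Dialectica soundness argument.
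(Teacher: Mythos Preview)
Your proposal is correct and follows essentially the same approach as the paper: induction on the length of the derivation, with the Diller--Nahm bounded-collection mechanism handling implication and contraction, identity-like realizers for the quantifier axioms and rules, explicit closed $\Tstar$-terms realizing $\EQ$, $\Tst$, $\IA^{\st}$ (via the recursors $\tup{\mathcal R}$, the paper working with the equivalent rule form), and the five characteristic principles. The paper's proof is simply a fully spelled-out version of your sketch, giving the concrete realizing terms in each case; the only case you might underestimate is the syllogism rule, where assembling the counterexample term $\tup T_6$ from the two given $\tup Y$-terms requires an explicit $n$-fold concatenation over the intermediate sequence, but this is exactly the ``routine adaptation'' you anticipate.
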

\begin{proof} As in the proof of the soundness of the Dialectica interpretation we proceed by induction on the length of the derivation.

\begin{enumerate}
\item
We will first consider the logical axioms and rules:
\begin{enumerate}

\item $A \to A \land A$:

With $A^{\D}\equiv \existsst \tup{x}\forallst \tup{y} \, \phi (\tup{x},\tup{y},\tup{a})$ we have
\begin{displaymath}
\begin{array}{c}
                  (A \to A \land A)^{\D}   \equiv  \existsst \tup{X}',\tup{X}'',\tup{Y} \forallst \tup{x},\tup{y}',\tup{y}'' \\
               \Big( \forall \tup{z} \in \tup{Y} [\tup{x},\tup{y}', \tup{y}'']  
\phi (\tup{x},  \tup{z},\tup{a}  )
\to
\phi(\tup{X}'[\tup{x}],\tup{y}',\tup{a}) \land \phi(\tup{X}''[\tup{x}],\tup{y}'',\tup{a}) \Big),
\end{array}
\end{displaymath}
and we can take
\begin{eqnarray*}
     \tup{X}'   &:=&  \Lambda \, \tup{x} \, . \, \tup{x}, \\
     \tup{X}''  &:=&  \Lambda \, \tup{x} \, . \, \tup{x}, \\
     \tup{Y}  &:=&   \Lambda \, \tup{x} ,\tup{y}' ,\tup{y}'' \, . \, \tup{\langle \tup{y}', \tup{y}'' \rangle}.
\end{eqnarray*}

\item $A \lor A \to A$:

With $A^{\D}\equiv \existsst \tup{x}\forallst \tup{y} \, \phi (\tup{x},\tup{y},\tup{a})$ we have
\begin{displaymath}
\begin{array}{c}
(A \lor A \to A)^{\D}   \equiv 
 \existsst \tup{X}'',\tup{Y},\tup{Y}' \forallst \tup{x},\tup{x}',\tup{y}'' \\
\Big( (\forall \tup{z} \in \tup{Y} [\tup{x},\tup{x}', \tup{y}'']  \phi (\tup{x},  \tup{z},\tup{a}  )     \lor \
\forall \tup{z'} \in \tup{Y}' [\tup{x},\tup{x}', \tup{y}'']
\phi (\tup{x}',  \tup{z'},\tup{a}  )
 ) \\
  \to
 \phi (\tup{X}''[\tup{x},\tup{x}'],  \tup{y}'',\tup{a}  )
  \Big),
\end{array}
\end{displaymath}
and we can take
\begin{eqnarray*}
     \tup{X}''  &:=&  \Lambda \, \tup{x},\tup{x}' \, . \,  \tup{x} * \tup{x}', \\
     \tup{Y}  &:=& \Lambda \, \tup{x},\tup{y},\tup{y}'' \, . \, \tup{\langle \tup{y}'' \rangle}, \\
     \tup{Y}'  &:=& \Lambda \, \tup{x},\tup{y},\tup{y}'' \, . \, \tup{\langle \tup{y}'' \rangle}.
\end{eqnarray*}

\item $A \to A \lor B$:

With $A^{\D}\equiv \existsst \tup{x}\forallst \tup{y} \, \phi (\tup{x},\tup{y},\tup{a})$ and $B^{\D}\equiv \existsst \tup{u}\forallst \tup{v} \, \psi (\tup{u},\tup{v},\tup{b})$ we have
\begin{displaymath}
\begin{array}{c}
                  (A \to A \lor B)^{\D}   \equiv  \existsst \tup{X}',\tup{U},\tup{Y} \forallst \tup{x},\tup{y}',\tup{v} \\
               \Big( \forall \tup{z} \in \tup{Y} [\tup{x},\tup{y}', \tup{v}]
\phi (\tup{x},  \tup{z} ,\tup{a}  ) \to
\phi(\tup{X}'[\tup{x}],\tup{y}',\tup{a}) \lor \psi(\tup{U}[\tup{x}],\tup{v},\tup{b}) \Big),
\end{array}
\end{displaymath}
and we can take
\begin{eqnarray*}
     \tup{X}'  &:=& \Lambda \, \tup{x} \, . \,  \tup{x}, \\
     \tup{Y}   &:=& \Lambda \, \tup{x},\tup{y}',\tup{v} \, . \, \tup{\langle \tup{y}' \rangle}, \\
     \tup{U}   &:=& \Lambda \, \tup{x} \, . \,  \tup{\mathcal{O}}.
\end{eqnarray*}

\item $A \land B \to A$:

With $A^{\D}\equiv \existsst \tup{x}\forallst \tup{y} \, \phi (\tup{x},\tup{y},\tup{a})$ and $B^{\D}\equiv \existsst \tup{u}\forallst \tup{v} \, \psi (\tup{u},\tup{v},\tup{b})$ we have
\begin{displaymath}
\begin{array}{c}
                  (A \land B \to A)^{\D}   \equiv  \existsst \tup{X}',\tup{Y},\tup{V} \forallst \tup{x},\tup{u},\tup{y}' \\ 
               \Big( \forall \tup{z} \in \tup{Y} [\tup{x},\tup{u}, \tup{y}'] \, \forall \tup{t} \in \tup{V} [\tup{x},\tup{u}, \tup{y}']
\big(\phi (\tup{x},  \tup{z},\tup{a}  )  \land  \, \psi (\tup{u},  \tup{t},\tup{b}  )\big)   \to
\phi(\tup{X}'[\tup{x}, \tup{u}],\tup{y}',\tup{a})  \Big),
\end{array}
\end{displaymath}
and we can take
\begin{eqnarray*}
     \tup{X}'  &:=&  \Lambda \, \tup{x},\tup{u} \, . \, \tup{x}, \\
     \tup{Y}  &:=& \Lambda \, \tup{x},\tup{u},\tup{y}' \, . \, \tup{\langle \tup{y}' \rangle}, \\
     \tup{V}  &:=&  \Lambda \, \tup{x},\tup{u},\tup{y}' \, . \, \tup{\mathcal{O}}.
\end{eqnarray*}

\item $A \lor B \to B \lor A$:

With $A^{\D}\equiv \existsst \tup{x}\forallst \tup{y} \, \phi (\tup{x},\tup{y},\tup{a})$ and $B^{\D}\equiv \existsst \tup{u}\forallst \tup{v} \, \psi (\tup{u},\tup{v},\tup{b})$ we have
\begin{displaymath}
\begin{array}{c}
                  (A \lor B \to B \lor A)^{\D}   \equiv  \existsst \tup{U}',\tup{X}',\tup{Y},\tup{V} \forallst \tup{x},\tup{u},\tup{v}',\tup{y}' \\
               \Big( \forall \tup{z} \in \tup{Y} [\tup{x},\tup{u}, \tup{v}',\tup{y}'] \, \forall \tup{t} \in \tup{V} [\tup{x},\tup{u},\tup{v}', \tup{y}'] 
\big(\phi (\tup{x},  \tup{z},\tup{a}  )   \lor  \, \psi (\tup{u},  \tup{t}, \tup{b}  )\big)  \\ \to
\big(  \psi(\tup{U}'[\tup{x}, \tup{u}],\tup{v}',\tup{b}) \lor  \phi(\tup{X}'[\tup{x}, \tup{u}],\tup{y}',\tup{a}) \big) \Big),
\end{array}
\end{displaymath}
and we can take
\begin{eqnarray*}
     \tup{U}'  &:=&  \Lambda \, \tup{x},\tup{u} \, . \, \tup{u}, \\
     \tup{X}'  &:=&  \Lambda \, \tup{x},\tup{u} \, . \, \tup{x}, \\
     \tup{Y}  &:=& \Lambda \, \tup{x},\tup{u},\tup{v}',\tup{y}' \, . \, \tup{\langle \tup{y}' \rangle}, \\
     \tup{V}  &:=& \Lambda \, \tup{x},\tup{u},\tup{v}',\tup{y}' \, . \, \tup{\langle \tup{v}' \rangle}.
\end{eqnarray*}

\item $A \land B \to B \land A$:

We can take the same terms as for $A \lor B \to B \lor A$, since $\lor$ and $\land$ are handled in the same way by the interpretation.

\item $\bot \to A$:

With $A^{\D}\equiv \existsst \tup{x}\forallst \tup{y} \, \phi (\tup{x},\tup{y},\tup{a})$ we have
\[
                  (\bot \to A)^{\D}   \equiv  \existsst \tup{X} \forallst \tup{y} \, \big(\bot \to \phi  (\tup{X},\tup{y},\tup{a}) \big),
\]
and we can take $\tup{X} :=  \tup{\mathcal{O}}$.

\item $\forall z A  \to A[t/z]$:

With $A^{\D}\equiv \existsst \tup{x}\forallst \tup{y} \, \phi (\tup{x},\tup{y},z,\tup{a})$
 we have
\begin{displaymath}
\begin{array}{c}
(\forall z A  \to A[t/z])^{\D}   \equiv 
 \existsst \tup{X}',\tup{Y} \forallst \tup{x},\tup{y}' \\
\Big( \forall \tup{w} \in \tup{Y} [\tup{x},\tup{y}'] \forall z\,
\phi (\tup{x},  \tup{w}, z ,\tup{a}  )   \to
 \phi (\tup{X}'[\tup{x}],  \tup{y}',t,\tup{a}  )
  \Big),
\end{array}
\end{displaymath}
and we can take
\begin{eqnarray*}
     \tup{X}'   &:=& \Lambda \, \tup{x} \, . \, \tup{x}, \\
     \tup{Y}   &:=& \Lambda \, \tup{x},\tup{y}' \, . \, \tup{\langle \tup{y}' \rangle}.
\end{eqnarray*}

\item $A[t/z] \to \exists z A $:

With $A^{\D}\equiv \existsst \tup{x}\forallst \tup{y} \, \phi (\tup{x},\tup{y},z,\tup{a})$
 we have
\begin{displaymath}
\begin{array}{c}
(A[t/z] \to \exists z A)^{\D}   \equiv 
 \existsst \tup{X}',\tup{Y} \forallst \tup{x},\tup{y}' \\
\Big( \forall \tup{w} \in \tup{Y} [\tup{x},\tup{y}'] \,
\phi (\tup{x},  \tup{w} ,t,\tup{a}  )   \to
 \exists z\forall \tup{w} \in \tup{y}'\, \phi (\tup{X}'[\tup{x}],  \tup{w},z,\tup{a}  )
  \Big),
\end{array}
\end{displaymath}
and we can take
\begin{eqnarray*}
     \tup{X}'  &:=& \Lambda \, \tup{x} \, . \, \tup{x}, \\
     \tup{Y}   &:=& \Lambda \, \tup{x},\tup{y}' \, . \, \tup{y}' .
\end{eqnarray*}

\item The modus ponens rule:

We assume that we have terms $\tup{t}_1$ and $\tup{T}_2,\tup{T}_3 $ realizing the interpretations of respectively $A$ and $A\to B$, and we wish to obtain terms $\tup{T}_4$ realizing the interpretation of $B$. So
with $A^{\D}\equiv \existsst \tup{x}\forallst \tup{y} \, \phi (\tup{x},\tup{y},\tup{a})$ and $B^{\D}\equiv \existsst \tup{u}\forallst \tup{v} \, \psi (\tup{u},\tup{v},\tup{b})$ we have
\[
         \ehastar+\Delta_{\intern} \vdash \forall \tup{y} \, \phi (\tup{t}_1,\tup{y},\tup{a})
\]
and
\[
           \ehastar+\Delta_{\intern} \vdash \forall \tup{x},\tup{v} \, \big(\forall \tup{z}\in \tup{T}_3[\tup{x},\tup{v}]\, \phi (\tup{x}, \tup{z}, \tup{a})   \to\psi (\tup{T}_2[\tup{x}],\tup{v}, \tup{b})\big).
\]
With $\tup{T}_4 := \tup{T}_2 [\tup{t}_1]$ we get
\[
        \ehastar+\Delta_{\intern} \vdash \forall \tup{v} \, \psi (\tup{T}_4,\tup{v},\tup{b}),
\]
as desired.

\item The syllogism rule:

We assume that we have terms $\tup{T}_1,\tup{T}_2$ and $\tup{T}_3,\tup{T}_4 $ realizing the interpretations of respectively $A\to B$ and $B\to C$, and we wish to obtain terms $\tup{T}_5,\tup{T}_6$ realizing the interpretation of $A\to C$. So
with $A^{\D}\equiv \existsst \tup{x}\forallst \tup{y} \, \phi (\tup{x},\tup{y},\tup{a})$, $B^{\D}\equiv \existsst \tup{u}\forallst \tup{v} \, \psi (\tup{u},\tup{v},\tup{b})$, and $C^{\D}\equiv \existsst \tup{w}\forallst \tup{z} \, \chi (\tup{w},\tup{z},\tup{c})$ we have
\begin{equation}\label{eq:syllogism_1}
           \ehastar+\Delta_{\intern} \vdash \forall \tup{x},\tup{v} \, \big(\forall \tup{i}\in \tup{T}_2[\tup{x},\tup{v}]\, \phi (\tup{x},\tup{i}, \tup{a})   \to\psi (\tup{T}_1[\tup{x}],\tup{v}, \tup{b})\big)
\end{equation}
and
\begin{equation}\label{eq:syllogism_2}
           \ehastar+\Delta_{\intern} \vdash \forall \tup{u},\tup{z} \, \big(\forall \tup{j}\in \tup{T}_4[\tup{u},\tup{z}|\, \psi (\tup{u},\tup{j}, \tup{b})   \to\chi (\tup{T}_3[\tup{u}],\tup{z}, \tup{c})\big),
\end{equation}
and we wish to obtain $\tup{T}_5,\tup{T}_6$ such that
\begin{equation*}
           \ehastar+\Delta_{\intern} \vdash \forall \tup{x},\tup{z} \, \big(\forall \tup{k} \in \tup{T}_6[\tup{x},\tup{z}]\, \phi (\tup{x},\tup{k}, \tup{a})   \to\chi (\tup{T}_5[\tup{x}],\tup{z}, \tup{c})\big).
\end{equation*}
To do this we let $\tup{T}_5  := \Lambda \, \tup{x} \, . \, \tup{T}_3 [\tup{T}_1[\tup{x}]]$,  $\tup{T}_7  := \Lambda \, \tup{x},\tup{z} \, . \, \tup{T}_4 [\tup{T}_1[\tup{x}],\tup{z}]$ and apply~(\ref{eq:syllogism_2}) with $\tup{u} = \tup{T}_1 [\tup{x}] $, such that
\begin{equation*}
           \ehastar+\Delta_{\intern} \vdash \forall \tup{x},\tup{z} \, \big(\forall \tup{j} \in \tup{T}_7[\tup{x},\tup{z}]\, \psi (\tup{T}_1[\tup{x}],\tup{j}, \tup{b})   \to\chi (\tup{T}_5[\tup{x}],\tup{z}, \tup{c})\big).
\end{equation*}
As a special case of~(\ref{eq:syllogism_1}) we get
\begin{displaymath}
\begin{array}{c}
           \ehastar+\Delta_{\intern}  \vdash   \forall \tup{x},\tup{z} \forall \tup{j} \in \tup{T}_7 [\tup{x},\tup{z}] \,  \\
           \big(\forall \tup{i} \in \tup{T}_2[\tup{x},\tup{j}] \, \phi (\tup{x},\tup{i}, \tup{a})  \to\psi (\tup{T}_1[\tup{x}],   \tup{j}   , \tup{b})\big),
\end{array}
\end{displaymath}
and thus
\begin{displaymath}
\begin{array}{c}
           \ehastar +\Delta_{\intern}  \vdash   \forall \tup{x},\tup{z} \\
       \big(  \forall \tup{j} \in \tup{T}_7 [\tup{x},\tup{z}] \, \forall \tup{i} \in \tup{T}_2[\tup{x},\tup{j} ] \,   \phi (\tup{x},\tup{i}, \tup{a})  \to  \chi (\tup{T}_5[\tup{x}],\tup{z}, \tup{c})  \big).
\end{array}
\end{displaymath}
Hence it is enough to construct terms $\tup{T}_6$ such that
\[
      \tup{T}_6[ \tup{x},\tup{z}] =_{\tup{\sigma}} \tup{T}_2 [\tup{x} , \tup{T}_7[\tup{x},\tup{z}]_{\tup{0}} ] *_{\tup{\sigma}}\ldots *_{\tup{\sigma}} \tup{T}_2 [\tup{x} , \tup{T}_7[\tup{x},\tup{z}]_{ | \tup{T}_7 [\tup{x} , \tup{z}] | - \tup{1}} ],
\]
where the concatenation is to include $\tup{T}_2 [\tup{x} , \tup{T}_7[\tup{x},\tup{z}]_{\tup{j}} ]$ for each $\tup{j} <  | \tup{T}_7 [\tup{x} , \tup{z}] |$, which one can easily do using terms from $\Tstar$.

\item The importation and exportation rules:

With $A^{\D}\equiv \existsst \tup{x}\forallst \tup{y} \, \phi (\tup{x},\tup{y},\tup{a})$, $B^{\D}\equiv \existsst \tup{u}\forallst \tup{v} \, \psi (\tup{u},\tup{v},\tup{b})$, and $C^{\D}\equiv \existsst \tup{w}\forallst \tup{z} \, \chi (\tup{w},\tup{z},\tup{c})$ we get
\begin{displaymath}
\begin{array}{c}
                  (A \land B \to C)^{\D}   \equiv  \existsst \tup{W},\tup{Y},\tup{V} \forallst \tup{x},\tup{u},\tup{z} \\ 
               \Big( \forall \tup{i} \in \tup{Y} [\tup{x},\tup{u}, \tup{z}] \, \forall \tup{j} \in \tup{V} [\tup{x},\tup{u}, \tup{z}]
\big(\phi (\tup{x},  \tup{i} ,\tup{a}  )   \land  \, \psi (\tup{u},  \tup{j} ,\tup{b}  )\big)   \to
\chi(\tup{W}[\tup{x}, \tup{u}],\tup{z},\tup{c})  \Big)
\end{array}
\end{displaymath}
and
\begin{displaymath}
\begin{array}{c}
                  (A \to (B \to C))^{\D}  \equiv \existsst \tup{W},\tup{Y},\tup{V} \forallst \tup{x},\tup{u},\tup{z} \\
               \Big( \forall \tup{i} \in \tup{Y} [\tup{x},\tup{u}, \tup{z}] \,
               \phi (\tup{x},  \tup{i} ,\tup{a}  )  \to \big(
               \forall \tup{j} \in \tup{V} [\tup{x},\tup{u}, \tup{z}]
 \, \psi (\tup{u},  \tup{j},\tup{b}  )  \to
\chi(\tup{W}[\tup{x}, \tup{u}],\tup{z},\tup{c})  \big) \Big),
\end{array}
\end{displaymath}
so that the same terms realize the interpretations of $A \land B \to C$ and $A \to (B \to C)$.

\item The expansion rule:

We assume that we have terms $\tup{T}_1,\tup{T}_2 $ realizing the interpretation of $A\to B$, and we wish to obtain terms $\tup{T}_3,\tup{T}_4,\tup{T}_5,\tup{T}_6$ realizing the interpretation of $C \lor A\to C \lor B$. So with $A^{\D}\equiv \existsst \tup{x}\forallst \tup{y} \, \phi (\tup{x},\tup{y},\tup{a})$, $B^{\D}\equiv \existsst \tup{u}\forallst \tup{v} \, \psi (\tup{u},\tup{v},\tup{b})$, and $C^{\D}\equiv \existsst \tup{w}\forallst \tup{z} \, \chi (\tup{w},\tup{z},\tup{c})$ we have
\[
           \ehastar+\Delta_{\intern} \vdash \forall \tup{x},\tup{v} \, \big(\forall \tup{i}\in \tup{T}_2[\tup{x},\tup{v}]\, \phi (\tup{x},\tup{i}, \tup{a})   \to\psi (\tup{T}_1[\tup{x}],\tup{v}, \tup{b})\big),
\]
and we want $\tup{T}_3,\tup{T}_4,\tup{T}_5,\tup{T}_6$ such that
\begin{displaymath}
\begin{array}{c}
                  \ehastar+\Delta_{\intern}     \vdash  \\ \forall \tup{w},\tup{x},\tup{z}',\tup{v} 
               \Big( \forall \tup{j} \in \tup{T}_5 [\tup{w},\tup{x},\tup{z}',\tup{v}] \, \forall \tup{i} \in \tup{T}_6 [\tup{w},\tup{x},\tup{z}',\tup{v}]  
\big(\chi (\tup{w},  \tup{j},\tup{c}  )   \, \lor  \, \phi (\tup{x},  \tup{i},\tup{a}  )\big)  \\   \to
 \chi(\tup{T}_3[\tup{w}, \tup{x}],\tup{z}',\tup{c})   \lor  \psi(\tup{T}_4[\tup{w}, \tup{x}],\tup{v},\tup{b})   \Big).
\end{array}
\end{displaymath}
Thus we can take
\begin{eqnarray*}
     \tup{T}_3   &:=& \Lambda \, \tup{w},\tup{x} \, . \, \tup{w}, \\
     \tup{T}_4   &:=& \Lambda \, \tup{w},\tup{x} \, . \, \tup{T}_1 [\tup{x}], \\
     \tup{T}_5   &:=& \Lambda \, \tup{w},\tup{x},\tup{z}',\tup{v} \, . \, \tup{\langle \tup{z}' \rangle}, \\
     \tup{T}_6   &:=& \Lambda \, \tup{w},\tup{x},\tup{z}',\tup{v} \, . \, \tup{T}_2 [\tup{x},\tup{v}].
\end{eqnarray*}

\item The quantifier rules:

\begin{enumerate}
\item We assume that we have terms $\tup{T}_1,\tup{T}_2 $ realizing the interpretation of $B\to A$, and we want terms $\tup{T}_3,\tup{T}_4$ realizing the interpretation of $B \to \forall z A$, where $z$ is not among the free variables of $B$. Thus with $A^{\D}\equiv \existsst \tup{x}\forallst \tup{y} \, \phi (\tup{x},\tup{y},z,\tup{a})$ and $B^{\D}\equiv \existsst \tup{u}\forallst \tup{v} \, \psi (\tup{u},\tup{v},\tup{b})$ we have
\[
           \ehastar+\Delta_{\intern} \vdash \forall \tup{u},\tup{y} \, \big(\forall \tup{i}\in \tup{T}_2[\tup{u},\tup{y}]\, \psi (\tup{u},\tup{i}, \tup{b})   \to\phi (\tup{T}_1[\tup{u}],\tup{y}, z,\tup{a})\big),
\]
and we want
\[
           \ehastar+\Delta_{\intern} \vdash \forall \tup{u},\tup{y} \, \big(\forall \tup{i} \in \tup{T}_4[\tup{u},\tup{y}]\, \psi (\tup{u},\tup{i}, \tup{b})   \to \forall z \, \phi (\tup{T}_3[\tup{u}],\tup{y}, z,\tup{a})\big).
\]
Hence we may take $\tup{T}_3 := \tup{T}_1$ and $\tup{T}_4 := \tup{T}_2$.

\item We have terms $\tup{T}_1,\tup{T}_2 $ realizing the interpretation of $A\to B$, and we want terms $\tup{T}_3,\tup{T}_4$ realizing the interpretation of $\exists z A \to B$, where $z$ is not among the free variables of $B$. Thus with $A^{\D}\equiv \existsst \tup{x}\forallst \tup{y} \, \phi (\tup{x},\tup{y},z,\tup{a})$ and $B^{\D}\equiv \existsst \tup{u}\forallst \tup{v} \, \psi (\tup{u},\tup{v},\tup{b})$ we have
\[
           \ehastar+\Delta_{\intern} \vdash \forall \tup{x},\tup{v} \, \big(\forall \tup{i} \in \tup{T}_2[\tup{x},\tup{v}]\, \phi (\tup{x},\tup{i},z, \tup{a})   \to\psi (\tup{T}_1[\tup{x}],\tup{v}, \tup{b})\big),
\]
and we want
\[
           \ehastar+\Delta_{\intern}     \vdash      \forall \tup{x},\tup{v} \, \big(
           \forall \tup{k} \in \tup{T}_4  [\tup{x},\tup{v}]   \, \exists z
           \forall \tup{i}\in \tup{k}        \phi (\tup{x}, \tup{i}, z,\tup{a})   \to\psi (\tup{T}_3[\tup{x}],\tup{v}, \tup{b})\big).\]
So we can take $\tup{T}_3 := \tup{T}_1$ and $\tup{T}_4 :=   \Lambda \, \tup{x},\tup{v} \, . \, \tup{\langle \, \tup{T}_2 [\tup{x}, \tup{v}] \, \rangle}$.

\end{enumerate}

\end{enumerate}
\item The nonlogical axioms of $\ehastar$: These axioms are all internal, and so their $\D$-interpretations are all realized by the empty tuple of terms.
\item The defining axioms $\EQ$ of the external quantifiers:
\begin{enumerate}
\item $\forallst x \, \Phi (x) \leftrightarrow \forall x ( \st (x) \to \Phi (x))$:

We treat first $\forallst x \, \Phi (x) \to \forall x ( \st (x) \to \Phi (x))$. With $\Phi^{\D}\equiv \existsst \tup{u}\forallst \tup{v} \, \phi (\tup{u},\tup{v},x,\tup{a})$ we get
\[
           (\forallst x \, \Phi (x) )^{\D} \equiv \existsst \tup{U} \forallst x,\tup{v}\, \phi(\tup{U}[x], \tup{v},x,\tup{a})
\]
and
\[
           \big(\forall x\, (\st(x) \to \Phi (x)) \big)^{\D} \equiv
           \existsst \tup{U}' \forallst z,\tup{v}'
           \forall x \, \big( x \in z \to
           \phi(\tup{U}'[z], \tup{v}',x,\tup{a})\big),
\]
and thus  $\big(\forallst x \, \Phi (x)  \to \forall x\, (\st(x) \to \Phi (x)) \big)^{\D}$ is
\begin{displaymath}
\begin{array}{c}
           \existsst \tup{\tilde{U}}' , X, \tup{V} \, \forallst \tup{U},z,\tup{v}' \\
            \Big(
           \forall i \in X[\tup{U}, z,\tup{v}']  \,  \forall \tup{j} \in \tup{V} [\tup{U}, z, \tup{v}']
            \phi\big( \tup{U}[i] , \tup{j} , i , \tup{a}\big) \to \forall x \big( \, x \in z \to \phi ( \tup{\tilde{U}}' [\tup{U} , z] , \tup{v}' , x , \tup{a}     )\big)
           \Big).
\end{array}
\end{displaymath}
Let now
\begin{eqnarray*}
     X   &:=& \Lambda \, \tup{U},z,\tup{v}' \, . \, z, \\
     \tup{V}   &:=& \Lambda \,  \tup{U},z,\tup{v}' \, . \, \tup{\langle \tup{v}' \rangle}, \\
     \tup{\tilde{U}}'   &:=& \Lambda \,  \tup{U},z  \, . \, \tup{U}[z_0] * \ldots * \tup{U}[z_{|z|-1}]
\end{eqnarray*}
(with
\[
      \tup{U}[z_0] * \ldots * \tup{U}[z_{|z|-1}] := U_0[z_0] * \ldots * U_0[z_{|z|-1}], \ldots,                                                                               U_{n-1}[z_0] * \ldots * U_n[z_{|z|-1}]
\]
for $\tup{U}=U_0,\ldots,U_{n-1}$). 

Next we treat $\forall x ( \st (x) \to \Phi (x))  \to \forallst x \, \Phi (x)$.
Similarly to the case above we get that $\big(\forall x ( \st (x) \to \Phi (x))  \to \forallst x \, \Phi (x)\big)^{\D}$ is
\begin{eqnarray*}
          & \existsst \tup{\tilde{U}}, Z, \tup{V}' \, \forallst \tup{U}' ,x,\tup{v} \, \\ & 
            \Big(  \forall k  \in Z[\tup{U}', x,\tup{v}] \,  \forall \tup{j} \in \tup{V}' [\tup{U}', x, \tup{v}]                         \forall x' \in k \, \phi ( \tup{U}' [k] ,   \tup{j}  , x' , \tup{a})  \to  \phi ( \tup{\tilde{U}} [\tup{U}' , x] , \tup{v} , x , \tup{a}     )
           \Big).
\end{eqnarray*}
Hence we can take
\begin{eqnarray*}
     Z   &:=& \Lambda \, \tup{U}',x,\tup{v} \, . \, \langle \langle x \rangle \rangle, \\
     \tup{V}'   &:=& \Lambda \,  \tup{U}' ,x,\tup{v} \, . \, \tup{\langle \tup{v} \rangle}, \\
     \tup{\tilde{U}}   &:=& \Lambda \,  \tup{U}',x  \, . \, \tup{U}' [  \langle x\rangle ].
\end{eqnarray*}

\item $\existsst x \, \Phi (x) \leftrightarrow \exists x ( \st (x) \land \Phi (x))$:

First we treat $\existsst x \, \Phi (x) \to \exists x ( \st (x) \land \Phi (x))$. With $\Phi^{\D}\equiv \existsst \tup{u}\forallst \tup{v} \, \phi (\tup{u},\tup{v},x,\tup{a})$ we get
\[
           (\existsst x \, \Phi (x) )^{\D} \equiv \existsst \tup{u},x \forallst \tup{v}\exists i \in x \forall \tup{j} \in \tup{v}\, \phi\big(\tup{u}, \tup{j},i,\tup{a}\big)
\]
and
\[
           \big(\exists x\, (\st(x) \land \Phi (x)) \big)^{\D} \equiv
           \existsst z,\tup{u}' \forallst \tup{v}'
           \exists x' \forall \tup{j} \in \tup{v}' \, \big(x' \in z \, \land \,
           \phi(\tup{u}', \tup{j},x',\tup{a})\big),
\]
and thus $\big(\existsst x \, \Phi (x) \to \exists x ( \st (x) \land \Phi (x))\big)^{\D}$ is
\begin{eqnarray*}
          & \existsst Z,\tup{U}' , \tup{V} \, \forallst \tup{u},x,\tup{v}' \\ &
            \Big(
           \forall \tup{l} \in \tup{V}[\tup{u}, x,\tup{v}']   \,
           \exists k \in x \, \forall \tup{n} \in \tup{l} \, \phi\big( \tup{u}, \tup{n} , k , \tup{a}\big) \to \\ & \exists x'  \,\forall \tup{j} \in \tup{v}' \big(x' \in Z[\tup{u}, x] \, \land \, \phi ( \tup{U}' [\tup{u} , x] , \tup{j} , x' , \tup{a}     )\big)
           \Big).
\end{eqnarray*}
Thus we can take
\begin{eqnarray*}
     Z   &:=& \Lambda \, \tup{u},x \, . \,  x,  \\
     \tup{U}'   &:=& \Lambda \,  \tup{u} ,x \, . \,  \tup{u},  \\
     \tup{V}   &:=& \Lambda \,  \tup{u},x,\tup{v}'  \, . \,   \tup{\langle \tup{v}' \rangle} .
\end{eqnarray*}

Finally we consider $\exists x ( \st (x) \land \Phi (x)) \to \existsst x \, \Phi (x)$. Similarly to the case above we get that $\big(\exists x ( \st (x) \land \Phi (x)) \to \existsst x \, \Phi (x) \big)^{\D}$ is
\begin{eqnarray*}
          & \existsst \tup{U}, X, \tup{V}' \, \forallst z, \tup{u}' ,\tup{v} \, \\ &
            \Big(  \forall \tup{l} \in \tup{V}' [z,\tup{u}', \tup{v}]  \, \exists x' \, \forall \tup{j} \in \tup{l}
                        \big(x' \in z
           \,  \land \, \phi ( \tup{u}'  ,  \tup{j}  , x' , \tup{a} )\big)  \to  \\ & \exists k \in X[z,\tup{u}'] \, \forall \tup{n} \in \tup{v} \, \phi ( \tup{U} [z,\tup{u}' ] , \tup{n} , k , \tup{a}     )
           \Big).
\end{eqnarray*}
Let now
\begin{eqnarray*}
     X   &:=& \Lambda \, z,\tup{u}' \, . \,  z,  \\
     \tup{U}   &:=& \Lambda \,  z,\tup{u}' \, . \,  \tup{u}',  \\
     \tup{V}'   &:=& \Lambda \,  z,\tup{u}',\tup{v}  \, . \,   \tup{\langle \tup{v} \rangle} .
\end{eqnarray*}

\end{enumerate}

\item The schemata $\Tst$:
\begin{enumerate}
\item $\st (x) \, \land \, x=y \to \st (y)$:  We have that $\big(\st (x) \, \land \, x=y \to \st (y)\big)^{\D}$ is
\[
     \existsst V \forallst u \, \big(x \in u \, \land \, x=y \to y \in V[u]\big),
\]
and so we can take $V :=  \Lambda \, u\, . \,  u  $.

\item $\st (t)$ for closed terms $t$ in $\Tstar$: Since
\[
         \big(\st (t)\big)^{\D} \equiv \existsst x \, t \in x,
\]
we can take $x:= \langle t \rangle$.
\item $\st (f^{\sigma \to \tau}) \, \land \, \st (x^{\sigma}) \to \st (f(x))$: We have that $\big(\st (f) \, \land \, \st (x) \to \st (f(x))\big)^{\D}$ is
\begin{eqnarray*}
    &  \existsst W \forallst u,v \big(f \in u \, \land \,x \in v \to  f(x) \in W[u,v]\big),
\end{eqnarray*}
hence it is enough to construct a closed term $W$ such that
\[
       W[u,v] =_{\tau^*} \langle u_i (v_j) \, : \, i < |u|, j < |v|\rangle,
\]
and this we can do easily using closed terms from $\Tstar$.
\end{enumerate}
\item The external induction axiom $\IA^{\st}$: We will consider the equivalent external induction rule
\[
         \IR^{\st}:\quad \begin{array}{c}\Phi (0),\ \forallst n^0 (\Phi (n) \to \Phi(n+1)) \\  \hline  \forallst n^0 \Phi(n) \end{array},
\]
from which one can derive $\IA^{\st}$ by taking
\[
   \Phi(m^0):\equiv \Psi (0)\, \land \,  \forallst n^0 (\Psi (n) \to \Psi(n+1)) \to \Psi(m).
\]
We assume that we have terms $\tup{T}_1$ and $\tup{T}_2,\tup{T}_3 $ realizing the interpretations of $\Phi(0)$ and $\forallst n^0 (\Phi (n) \to \Phi(n+1))$, and we wish to obtain terms $\tup{T}_4$ realizing the interpretation of $\forallst n^0 \Phi(n) $. So with $\big(\Phi(n)\big)^{\D}\equiv \existsst \tup{x}\forallst \tup{y} \, \phi (\tup{x},\tup{y},n,\tup{a})$ we have
\begin{equation*}
      \ehastar + \Delta_{\intern}  \vdash \forall \tup{y} \, \phi (\tup{T}_1,\tup{y},0,\tup{a})
\end{equation*}
and
\begin{eqnarray*}
        &   \ehastar + \Delta_{\intern} \vdash \forall n^0, \tup{x} ,\tup{y}' \\ &
            \Big(  \forall \tup{i} \in \tup{T}_3 [n,\tup{x}, \tup{y}']  \, \phi(\tup{x},\tup{i}, n,\tup{a} )  \to \phi(\tup{T}_2[n,\tup{x}] , \tup{y}',n+1,\tup{a})\Big),
\end{eqnarray*}
and we want
\begin{equation}\label{eq:proof_induction}
      \ehastar + \Delta_{\intern}  \vdash \forall n^0,\tup{y} \, \phi (\tup{T}_4[n],\tup{y},n,\tup{a}).
\end{equation}
By taking $\tup{T}_4 := \Lambda  \, n^0\, . \,  \tup{\mathcal{R}}(n,\tup{T}_1,\tup{T}_4)$ we get
\begin{eqnarray*}
       \ehastar + \Delta_{\intern} & \vdash & \tup{T}_4[0] =_{\tup{\rho}} \tup{T}_1 \\
       \ehastar + \Delta_{\intern} & \vdash & \tup{T}_4[n+1] =_{\tup{\rho}} \tup{T}_2 [n,\tup{T}_4[n]],
\end{eqnarray*}
which suffices to establish~(\ref{eq:proof_induction}).

\item The principles $\I$, $\NCR$, $\HAC$, $\HIP_{\forallst}$, and $\HGMP$:
\begin{enumerate}
\item $\I$: The $\D$-interpretations of the premise and the conclusion of any instance of $\I$ are identical, and it is easy to show that $\Lambda \tup{x}\, .\, \tup{\langle \tup{x}\rangle}$ realizes the interpretation of the whole implication, provably in $\ehastar$.
\item $\NCR$: Suppose $\big(\Phi(x,y)\big)^{\D}\equiv \existsst \tup{u} \forallst \tup{v} \, \phi (\tup{u},\tup{v},x,y)$. Then
\[
         \big(   \forall x \existsst y \, \Phi(x,y)\big)^{\D}\equiv \existsst \tup{u},y \, \forallst \tup{v} \, \forall x \, \exists i \in y \, \forall \tup{j} \in \tup{v}\, \phi \big(\tup{u},\tup{j},x,i\big)
\]
and
\begin{eqnarray*}
   & \big(   \existsst y \forall x \exists k \in y\, \Phi(x,k)\big)^{\D}  \equiv \\ &   \existsst \tup{u},y \forallst \tup{v} \exists m \in y \forall \tup{n} \in \tup{v}
      \forall x \exists k \in m \forall \tup{l} \in \tup{n}\, \phi \big(\tup{u},\tup{l},x,k\big),
\end{eqnarray*}
and so $ \big(  \forall x \existsst y \, \Phi(x,y)\to  \existsst y \forall x \exists k \in y\, \Phi(x,k)\big)^{\D}$
is
\begin{eqnarray*}
          & \existsst \tup{U}, Y, \tup{V} \, \forallst  \tup{u} ,y,\tup{v} \, \\ &
            \Big(  \forall \tup{i} \in \tup{V} [\tup{u},y, \tup{v}]  \, \forall x \, \exists j \in y \forall \tup{k} \in \tup{i}             \, \phi \big( \tup{u}  ,   \tup{k}  , x , j \big) \to \\ & \exists m \in Y[\tup{u},y] \, \forall \tup{n} \in \tup{v} \,
           \forall \tilde{x} \exists \tilde{k} \in m \, \forall \tup{l} \in \tup{n} \,
 \phi \big( \tup{U} [\tup{u},y ] ,\tup{l} ,\tilde{x}, \tilde{k}\big)
           \Big).
\end{eqnarray*}
Thus we can take
\begin{eqnarray*}
     \tup{U}   &:=& \Lambda \,  \tup{u},y \, . \,  \tup{u},  \\
     Y   &:=& \Lambda \, \tup{u},y \, . \,  \langle y \rangle,  \\
     \tup{V}   &:=& \Lambda \,  \tup{u},y,\tup{v}  \, . \,   \tup{v}.
\end{eqnarray*}
\item $\HAC$: Let $\big(\Phi(x,y)\big)^{\D}\equiv \existsst \tup{u} \forallst \tup{v} \, \phi (\tup{u},\tup{v},x,y)$.
Then $\big(\forallst x \existsst y \,  \Phi (x,y)\big)^{\D}$ is
\[
   \existsst \tup{U},Y  \forallst x,\tup{v} \exists i \in Y[x] \forall \tup{j} \in \tup{v}\, \phi (\tup{U}[x],\tup{j},x,i)
\]
and $\big(\existsst F \forallst x \exists i \in F(x) \, \Phi (x,i)\big)^{\D}$ is
\begin{eqnarray*}
    &    \existsst \tup{\tilde{U}},F \forallst \tilde{x},\tup{\tilde{v}} \exists k \in F \forall l \in \tilde{x}
    \forall \tup{m} \in \tup{\tilde{v}}   \exists i' \in k(l)
 \forall \tup{j'} \in \tup{m}\, \phi \big(\tup{\tilde{U}} [l]   , \tup{j'},  l ,   i'\big),
\end{eqnarray*}
so $\big(\forallst x \existsst y \,  \Phi (x,y)   \to  \existsst F \forallst x \exists i \in F(x) \Phi (x,i)   \big)^{\D}$ is

\begin{eqnarray*}
          & \existsst \tup{\tilde{U}}, F,X, \tup{V} \, \forallst  \tup{U} ,Y,\tilde{x},\tup{\tilde{v}} \\ &
            \Big(
            \forall n \in X[\tup{U},Y,\tilde{x}, \tup{\tilde{v}}] \forall \tup{n}' \in \tup{V} [\tup{U},Y,\tilde{x}, \tup{\tilde{v}}]
           \exists i \in Y[n]
            \forall \tup{j} \in \tup{n}'\, \phi (\tup{U}[n],\tup{j},     n     ,  i) \to \\ &
                     \exists k \in F [\tup{U}, Y] \, \forall l \in \tilde{x} \,
    \forall \tup{m} \in \tup{\tilde{v}}  \, \exists i' \in k(l) \,
   \forall \tup{j'} \in \tup{m}\, \phi \big(\tup{\tilde{U}} [\tup{U}, Y] [l]   , \tup{j'},  l ,   i'\big)
           \Big).
\end{eqnarray*}
Hence we can take
\begin{eqnarray*}
     \tup{\tilde{U}}   &:=& \Lambda \,  \tup{U},Y \, . \,  \tup{U},  \\
      F   &:=& \Lambda \, \tup{U},Y \, . \,  \langle \lambda x. Y[x] \rangle,  \\
     X   &:=& \Lambda \,  \tup{U},Y,\tilde{x},\tup{\tilde{v}} \, . \,  \tilde{x},  \\
     \tup{V}   &:=& \Lambda \,  \tup{U},Y,\tilde{x},\tup{\tilde{v}}  \, . \,   \tup{\tilde{v}}.
\end{eqnarray*}

\item $\HIP_{\forallst}$: Let $\big(\Psi(y)\big)^{\D}\equiv \existsst \tup{u} \forallst \tup{v} \, \psi (\tup{u},\tup{v},y)$. Then $ \big( \forallst x \, \phi(x) \to\existsst y\Psi(y)\big)^{\D}$ is
\[
       \existsst \tup{U},Y,X \forallst \tup{v}\, \Big(\forall k \in X [\tup{v}] \, \phi(k) \to
       \exists i \in Y \forall \tup{j} \in \tup{v} \, \psi \big(\tup{U}, \tup{j} ,i\big)\Big)
\]
and $\big(\existsst y \, \big(\forallst x \,  \phi (x)\to \exists i \in y \Psi(i)\big)\big)^{\D}$ is
\begin{eqnarray*}
    &    \existsst \tup{U},X,y \forallst \tup{v} \exists n \in y \forall \tup{m} \in \tup{v} \,
     \Big( \forall k \in X[\tup{m}] \, \phi(k) \to \exists i \in n \, \forall \tup{j} \in \tup{m}\, \psi \big(\tup{U},\tup{j},i\big)\Big),
\end{eqnarray*}
so the $\D$-interpretation of $\HIP_{\forallst}$ is
\begin{eqnarray*}
       &
        \existsst \tup{\tilde{U}},\tilde{X},\tilde{Y},\tup{V} \forallst \tup{U},Y,X,\tup{\tilde{v}}\, \\ &
 \Big(
        \forall \tup{l} \in \tup{V} [\tup{U},Y,X, \tup{\tilde{v}}] \, \big( \forall k \in X[\tup{l}] \,
         \phi (k) \to
       \exists i \in Y \, \forall \tup{j} \in \tup{l}       \,
        \psi \big(\tup{U}, \tup{j} ,i\big)\big)
   \to \\ &  \exists \tilde{n} \in \tilde{Y} [\tup{U},Y,X]  \forall \tup{\tilde{m}} \in \tup{\tilde{v}} \,
     \big( \,
     \forall \tilde{k} \in \tilde{X}[\tup{U},Y,X ][\tup{\tilde{m}}] \varphi(\tilde{k}) \to \\ & \exists \tilde{i}\in \tilde{n} \, \forall \tup{\tilde{j}} \in \tup{\tilde{m}} \, \psi \big(\tup{\tilde{U}}[\tup{U},Y,X], \tup{\tilde{j}}, \tilde{i} \big)\big)\Big).
\end{eqnarray*}
Hence we can take
\begin{eqnarray*}
     \tup{\tilde{U}}   &:=& \Lambda \,  \tup{U},Y,X \, . \,  \tup{U},  \\
     \tilde{X}   &:=& \Lambda \,  \tup{U},Y,X \, . \,  X,  \\
     \tilde{Y}   &:=& \Lambda \, \tup{U},Y,X \, . \,  \langle Y \rangle,  \\
     \tup{V}   &:=& \Lambda \,  \tup{U},Y,X,\tup{\tilde{v}}  \, . \,   \tup{\tilde{v}}.
\end{eqnarray*}

\item $\HGMP$: The $\D$-interpretation of $\HGMP$ is
\begin{eqnarray*}
    &  \existsst X \forallst x \, \Big(\big(\forall x' \in x \, \phi(x') \to \psi \big) \to \exists j\in X[x] \big(\forall k \in j \, \phi(k) \to \psi \big)\Big),
\end{eqnarray*}
and we can take $X := \Lambda  \, x\, . \,  \langle x\rangle$.

\end{enumerate}

\end{enumerate}

\end{proof}

Theorem~\ref{soundnessDst} implies the following conservation result, which improves on Moerdijk and Palmgren~\cite{moerdijkpalmgren97} and Avigad and Helzner~\cite{avigadhelzner02}.

\begin{cor} The system
\[
    \ehaststar + \I + \NCR + \HAC + \HGMP
    +  \HIP_{\forallst}
\]
is a conservative extension of $\ehastar$ and hence of $\eha$.

\end{cor}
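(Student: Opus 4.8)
The plan is to obtain the corollary as an immediate consequence of the soundness theorem (Theorem~\ref{soundnessDst}). First I would note that the system
\[
    \ehaststar + \I + \NCR + \HAC + \HGMP + \HIP_{\forallst}
\]
is an \emph{extension} of $\ehastar$ essentially by definition, since $\ehaststar$ already contains all the axioms of $\ehastar$. Hence the only thing that needs proof is conservativity.

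So suppose $\Phi$ is a formula in the language of $\ehastar$ (equivalently, an internal formula, possibly with free variables $\tup{a}$) and that the large system above proves $\Phi$. Since $\Phi$ is internal, it is a $\forallst$-formula with an empty block of external universal quantifiers, so by Lemma~\ref{le:forallst-formulas} --- or directly by inspecting the clauses (i) and (iii)--(vii) of the $\D$-interpretation, which for internal input never introduce new quantifiers --- we have $\Phi^{\D} \equiv \Phi$; more precisely, writing $\Phi^{\D} \equiv \existsst \tup{x} \forallst \tup{y}\, \phi_{\D}(\tup{x},\tup{y},\tup{a})$, both tuples $\tup{x}$ and $\tup{y}$ are empty and $\phi_{\D}$ is $\Phi$ itself.

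Now I would apply Theorem~\ref{soundnessDst} with $\Delta_{\intern} := \emptyset$. It produces closed terms $\tup{t}$ of $\Tstar$ --- here necessarily the empty tuple, as $\tup{x}$ is empty --- such that $\ehastar \vdash \forall \tup{y}\, \phi_{\D}(\tup{t},\tup{y},\tup{a})$, which is just $\ehastar \vdash \Phi$. This gives conservativity over $\ehastar$. For the last clause, recall from Section~2.1 that $\ehastar$ is a definitional, and hence conservative, extension of $\eha$; thus any formula in the language of $\eha$ provable in the large system is provable in $\ehastar$, and therefore in $\eha$. There is no real obstacle in this argument: all the work is already done in the soundness theorem, and the only point one has to get right is the (routine) observation that internal formulas are $\D$-interpreted by themselves, with empty tuples of both the existential and the universal variables.
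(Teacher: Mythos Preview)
Your argument is correct and is exactly the paper's approach: the paper's proof reads ``By Theorem~\ref{soundnessDst} and Lemma~\ref{le:forallst-formulas}'', and you have simply spelled out how these two ingredients combine.
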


\begin{proof}
By Theorem~\ref{soundnessDst} and Lemma~\ref{le:forallst-formulas}.

\end{proof}

\begin{remark} \label{hanst} We could define a system $\ehanststar$ by adding primitive predicates $\nst^{\sigma}$ (``nonstandard'') to $\ehaststar$ for each finite type $\sigma$, along with axioms
\[
          \forall x^{\sigma} \big(\nst (x) \leftrightarrow \lnot \st (x)\big).
\]
If we then extend the $\D$-interpretation by
\[
      \big(\nst^{\sigma}(x^{\sigma})\big)^{\D} :\equiv \forallst y^{\sigma}  y \neq_{\sigma} x),
\]
we get an analogue of Theorem~\ref{soundnessDst}, since $\big(  \nst (x) \to \lnot \st (x) \big)^{\D}$
is provably equivalent to
\[
     \existsst Y \forallst z \left( \forall y \in Y[z] (y \neq x) \to  x \not\in z \right)
\]
and $\big( \lnot \st (x) \to \nst(x)\big)^{\D}$
to
\[
     \existsst Z \forallst y \left( \forall z' \in Z[y] x \not\in z' \to y\neq x \right),
\]
so that we can take $Y[z] :=z$ and $Z[y] := \langle \langle y \rangle \rangle$  respectively .

\end{remark}

\subsection{The characteristic principles of the nonstandard functional interpretation}

In this section we will prove that the characteristic principles of the nonstandard functional interpretation are $\I$, $\NCR$, $\HAC$, $\HIP_{\forallst}$, and $\HGMP$. For notational simplicity we will let
\[
     \HH := \ehaststar + \I + \NCR + \HAC + \HIP_{\forallst} + \HGMP.
\]
\begin{thm}[Characterization theorem for the nonstandard functional interpretation] $\, $

\begin{enumerate}

\item For any formula $\Phi$ in the language of $\ehaststar$ we have
\[
          \HH \vdash \Phi \leftrightarrow \Phi^{\D}.
\]
\item For any formula $\Psi$ in the language of $\ehaststar$ we have: If for all $\Phi$ in
$\mathcal{L}(\ehaststar)$ (with $\Phi^{\D}\equiv \existsst \tup{x}\forallst \tup{y}\, \phi_{\D}(\tup{x},\tup{y})$) the implication
\begin{equation}\label{eq:char_thm_dst}
          \HH +\Psi \vdash \Phi  \quad \Longrightarrow \quad \mbox{there are closed terms $\tup{t}\in\Tstar$ s.t. }
          \eha \vdash \forall \tup{y}\, \phi_{\D}(\tup{t},\tup{y})
\end{equation}
holds, then $\HH \vdash \Psi$.

\end{enumerate}
\end{thm}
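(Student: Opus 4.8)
The plan is to prove part~(1) by induction on the logical structure of $\Phi$, and then to obtain part~(2) as a short deduction from part~(1) together with the soundness theorem (Theorem~\ref{soundnessDst}).

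For part~(1) the base cases are direct: for internal atomic $\phi$ we have $\phi^{\D}\equiv\phi$, and for $\st^{\sigma}(u)$ one checks $\HH\vdash\st(u)\leftrightarrow\existsst x^{\sigma^*}\,u\in_{\sigma}x$, where the left-to-right direction uses that $\langle u\rangle$ is standard whenever $u$ is, and the right-to-left direction is Lemma~\ref{elemstsetset}. For the compound cases one first rewrites $\Phi$ by replacing each immediate subformula by its $\D$-interpretation (legitimate by the induction hypothesis) and then manipulates the resulting prefix of $\existsst/\forallst$-quantifiers into the precise shape demanded by the corresponding clause of the $\D$-interpretation. The propositional cases ($\wedge$, $\vee$) are pure logic. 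For $\forall z$ one uses $\NCR$ to pull $\existsst\tup x$ out past $\forall z$ (the matrix $\forallst\tup y\,\phi_{\D}$ being upwards closed in $\tup x$ by the lemma that $\phi_{\D}$ is provably upwards closed in $\tup x$, so that the strong form of $\NCR$ applies); for $\forallst z$ one uses $\HAC$; and for $\exists z$ and $\existsst z$ one uses the idealization principle $\I$ (for $\existsst z$ one additionally uses that membership in a standard sequence forces standardness, Lemma~\ref{elemstsetset}, which is where the non-idempotence of the interpretation has to be handled). In each case the reverse implication not furnished by the relevant nonstandard principle is intuitionistically trivial.

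The implication case is the substantive one, and I expect it to be the main obstacle, just as the authors flag the analogous case for Herbrand realizability. Writing $\Phi^{\D}\equiv\existsst\tup x\forallst\tup y\,\phi_{\D}$ and $\Psi^{\D}\equiv\existsst\tup u\forallst\tup v\,\psi_{\D}$, one rewrites $\Phi\to\Psi$, via the induction hypotheses, as $\existsst\tup x\forallst\tup y\,\phi_{\D}\to\existsst\tup u\forallst\tup v\,\psi_{\D}$, hence as $\forallst\tup x\,(\forallst\tup y\,\phi_{\D}\to\existsst\tup u\forallst\tup v\,\psi_{\D})$; then $\HIP_{\forallst}$ (applicable since $\forallst\tup y\,\phi_{\D}$ is a $\forallst$-formula and $\forallst\tup v\,\psi_{\D}$ is upwards closed in $\tup u$) brings $\existsst\tup u$ to the front of the inner implication; pushing $\forallst\tup v$ outward and applying $\HGMP$ to the remaining internal implication $\forallst\tup y\,\phi_{\D}\to\psi_{\D}$ introduces the bounding sequence $\tup Y$; and two applications of $\HAC$ then collect the resulting $\existsst$-witnesses into functionals $\tup U,\tup Y$ of the required types. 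The work is in carrying this chain of rewrites through cleanly: verifying the upward-closedness side conditions at each step via the upward-closedness of $\phi_{\D},\psi_{\D}$ and Lemma~\ref{le:herbrand:new_application}, and using the interdefinability of finite-sequence application with ordinary application to curry and uncurry the witnesses correctly. Each nontrivial direction is exactly one of $\HIP_{\forallst}$, $\HGMP$, $\HAC$; the converse directions are again trivial.

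Part~(2) is then immediate. Assume the displayed implication holds for every $\Phi\in\mathcal{L}(\ehaststar)$ and instantiate it at $\Phi:=\Psi$. Since $\HH+\Psi\vdash\Psi$ trivially, we obtain closed terms $\tup t\in\Tstar$ with $\eha\vdash\forall\tup y\,\psi_{\D}(\tup t,\tup y)$, where $\Psi^{\D}\equiv\existsst\tup x\forallst\tup y\,\psi_{\D}(\tup x,\tup y)$. As $\eha$ is contained in $\HH$, we have $\HH\vdash\forallst\tup y\,\psi_{\D}(\tup t,\tup y)$, and since the $\Tst$-axioms give $\st(\tup t)$ for closed $\tup t\in\Tstar$, it follows that $\HH\vdash\existsst\tup x\forallst\tup y\,\psi_{\D}(\tup x,\tup y)$, i.e.\ $\HH\vdash\Psi^{\D}$. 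By part~(1), $\HH\vdash\Psi^{\D}\leftrightarrow\Psi$, hence $\HH\vdash\Psi$, as required.
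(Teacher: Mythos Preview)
Your approach matches the paper's, and part~(2) is correct as written. There is, however, one slip in part~(1): the $\vee$ case is \emph{not} pure intuitionistic logic. The forward direction is trivial, but for the converse you need
\[
\existsst \tup x,\tup u\,\forallst \tup y,\tup v\,\big(\phi_{\D}(\tup x,\tup y)\vee\psi_{\D}(\tup u,\tup v)\big)\;\to\;\existsst \tup x\,\forallst \tup y\,\phi_{\D}(\tup x,\tup y)\;\vee\;\existsst \tup u\,\forallst \tup v\,\psi_{\D}(\tup u,\tup v),
\]
and the inner step $\forallst \tup y,\tup v\,(\phi_{\D}\vee\psi_{\D})\to\forallst \tup y\,\phi_{\D}\vee\forallst \tup v\,\psi_{\D}$ is not intuitionistically derivable. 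This is precisely an instance of $\LLPO$, which the paper invokes here, noting (Proposition~\ref{prop:LLPO}) that $\LLPO$ follows from $\I$. With that one correction your argument goes through and coincides with the paper's proof.
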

\begin{proof}
\begin{enumerate}
\item
We will prove item 1 by induction on the logical structure of $\Phi$.
      \begin{enumerate}
          \item As induction start we note that for internal atomic $\phi$ we obviously have $\HH \vdash \phi \leftrightarrow \phi^{\D}$,
that
\[
         \HH \, \vdash \, \st(u^{\sigma}) \to \existsst x^{\sigma^*} u \in x
\]
follows by taking $x:=\langle u \rangle$, and that
\[
         \HH \, \vdash \, \existsst x^{\sigma^*} u \in x \to  \st(u^{\sigma})
\]
is Lemma~\ref{elemstsetset}.
          \item For the induction step involving $\Phi$, $\Psi$ we will use that (see 1.6.17 in~\cite{Troelstra73}) via appropriate embeddings of tuples of types in a suitable common higher type and tuple coding of functionals with inverses (all given by terms in $\Tstar$) there are $\phi(x,y)$, $\psi(u,v)$ such that
\[
     \ehaststar  \, \vdash \, \Phi^{\D} \leftrightarrow \existsst x \forallst y \, \phi(x,y)
\]
and
\[
     \ehaststar  \, \vdash \, \Psi^{\D} \leftrightarrow \existsst u \forallst v \, \psi(u,v),
\]
and such that $\phi(x,y)$ and $\psi(u,v)$ are provably upwards closed in $x$ and $u$ respectively.
            \begin{enumerate}
                      \item For $\land$ we must consider
                      \[\existsst x \forallst y \, \phi(x,y) \, \land \, \existsst u \forallst v \, \psi(u,v) \, \leftrightarrow \, \existsst x,u \forallst y,v \, \big(\phi(x,y) \, \land \, \psi(u,v)\big),\]
                      which follows by intuitionistic logic. (We can assume that $u,v$ do not appear in $\phi$, and that $x,y$ do not appear in $\psi$.)
                      \item For $\lor$ we note that
                       \[\existsst x \forallst y \, \phi(x,y) \, \lor \, \existsst u \forallst v \, \psi(u,v) \, \leftrightarrow \, \existsst x,u \forallst y,v \, \big(\phi(x,y) \, \lor \, \psi(u,v)\big)\]
                       follows from $\LLPO$, which by Proposition~\ref{prop:LLPO} follows from $\I$.
                      \item For implication we use that $\psi (u,v)$ is upwards closed in $u$ to conclude
                      \begin{eqnarray*}
                    \existsst x \forallst y \, \phi (x,y) \to \existsst u \forallst v \,\psi (u,v) & \leftrightarrow & \\
              \forallst x \, \big(\forallst y \, \phi (x,y) \to \existsst u \forallst v \,\psi (u,v)\big) &   \stackrel{\HIP_{\forallst}}{\leftrightarrow}& \\
               \forallst x \existsst u\, \big(\forallst y \, \phi (x,y) \to \forallst v \,\psi (u,v)\big) &  \leftrightarrow & \\
               \forallst x \existsst u \forallst v\, \big(\forallst y \, \phi (x,y) \to \psi (u,v)\big) &  \stackrel{\HGMP}{\leftrightarrow} & \\
                \forallst x \existsst u \forallst v \existsst y\, \big(\forall i\in y \, \phi (x,i) \to \psi (u,v)\big) &  \stackrel{\HAC}{\leftrightarrow} & \\
                 \existsst U \forallst x \forallst v \existsst y\, \big(\forall i\in y \, \phi (x,i) \to \psi (U(x),v)\big) &  \stackrel{\HAC +\tiny{\mbox{ coding }} x,v \tiny{\mbox{ into one}}}{\leftrightarrow} & \\
                  \existsst U,Y \forallst x , v \, \big(\forall i\in Y(x,v) \, \phi (x,i) \to \psi (U(x),v)\big) & \leftrightarrow & \\
                    \existsst U,Y \forallst x , v \, \big(\forall i\in Y[x,v] \, \phi (x,i) \to \psi (U[x],v)\big). &&
                  \end{eqnarray*}

                      \item For $\forall$ we use $\NCR$ and that $\phi(x,y,z)$ is upwards closed in $x$ to get
                      \[
                       \forall z \existsst x \forallst y \, \phi (x,y,z)  \ \stackrel{\NCR}{\leftrightarrow} \
     \existsst x \forall z \forallst y \, \phi (x,y,z)    \  \leftrightarrow \
       \existsst x  \forallst y \forall z \, \phi (x,y,z).
                      \]

                      \item For $\exists$ we use $\I$:
                       \begin{eqnarray*}
                   \exists z \existsst x \forallst y \, \phi (x,y,z)  & \leftrightarrow& \\
     \existsst x \exists z \forallst y \, \phi (x,y,z)    &  \stackrel{\I}{\leftrightarrow}& \\
       \existsst x  \forallst y \exists z \forall y' \in y\, \phi (x,y',z).    &  &
                  \end{eqnarray*}

                      \item For $\forallst$ we use $\HAC$ and that $\phi(x,y,z)$ is upwards closed in $x$:
                      \[
                   \forallst z \existsst x \forallst y \, \phi (x,y,z)  \   \stackrel{\HAC}{\leftrightarrow} \
       \existsst X  \forallst z,y  \, \phi (X(z),y,z)     \  \leftrightarrow \
       \existsst X  \forallst z,y  \, \phi (X[z],y,z).
                  \]

                      \item For $\existsst$ we will again use $\I$:
                      \begin{eqnarray*}
                   \existsst z \big(\existsst x \forallst y \, \phi (x,y,z)\big)  & \leftrightarrow & \\
     \existsst z \exists z' \in z \big(\existsst x\forallst y \, \phi (x,y,z')\big)    &  \leftrightarrow & \\
     \existsst z \, \existsst x \, \exists z' \in z \, \forallst y \, \phi (x,y,z')    & \stackrel{\I}{\leftrightarrow}& \\
     \existsst z \, \existsst x \, \forallst y \, \exists z' \in z \, \forall y' \in y \, \phi (x,y',z').    & &
                  \end{eqnarray*}

            \end{enumerate}
    \end{enumerate}
\item Assume that $\Psi$ is a formula of $\mathcal{L}(\ehaststar)$
such that implication~(\ref{eq:char_thm_dst})
holds, and let
$\Psi^{\D}\equiv \existsst \tup{x}\forallst \tup{y}\, \psi_{\D}(\tup{x},\tup{y})$. Then because of item 1 we have
\begin{eqnarray*}
            &   \HH +\Psi   \vdash  \Psi  & \\
                                  & \Longrightarrow  &  \\
    &   \exists \tup{t}\in\Tstar \mbox{ s.t. }    \ehastar  \vdash   \forall \tup{y}\, \psi_{\D}(\tup{t},\tup{y}) & \\
                                  & \Longrightarrow   & \\
      &    \exists \tup{t}\in\Tstar \mbox{ s.t. }     \ehaststar  \vdash   \forall \tup{y}\, \psi_{\D}(\tup{t},\tup{y}) & \\
                                  & \Longrightarrow  & \\
       &   \exists \tup{t}\in\Tstar \mbox{ s.t. }     \ehaststar  \vdash   \forallst \tup{y}\, \psi_{\D}(\tup{t},\tup{y}) & \\
                                  & \Longrightarrow & \\
             &   \ehaststar  \vdash   \existsst \tup{x}\forallst \tup{y}\, \psi_{\D}(\tup{x},\tup{y}) & \\
                                  & \Longrightarrow  & \\
            &   \HH  \vdash   \existsst \tup{x}\forallst \tup{y}\, \psi_{\D}(\tup{x},\tup{y}) & \\
                                  & \Longrightarrow  & \\
            &   \HH  \vdash  \Psi. &
\end{eqnarray*}
\end{enumerate}
\end{proof}

Theorem~\ref{soundnessDst} allows us to extract a finite sequence of candidates for the existential quantifier in formulas of the form $\forallst x \, \existsst y \, \phi (x,y)$, in the following sense:

\begin{thm}[Main theorem on program extraction by the $\D$-interpretation]
Let $\forallst x \existsst y \, \phi (x,y)$ be a sentence of $\ehaststar$ with $\phi(x,y)$ an internal formula,
and let $\Delta_{\intern}$ be a set of internal sentences.
If
\[
    \ehaststar + \I + \NCR + \HAC + \HGMP
    +  \HIP_{\forallst} + \Delta_{\intern} \vdash \forallst x \, \existsst y \, \phi (x,y),
\]
then from the proof we can extract a closed term $t$ in $\Tstar$ such that
\[
    \ehastar +\Delta_{\intern} \vdash \forall x \, \exists y \in t(x)\, \phi (x,y).
\]

\end{thm}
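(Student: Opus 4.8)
The plan is to compute the $\D$-interpretation of the sentence $\forallst x\,\existsst y\,\phi(x,y)$ and then to feed the given derivation into the soundness theorem, Theorem~\ref{soundnessDst}. First I would unwind the interpretation. Since $\phi(x,y)$ is internal it is in particular a $\forallst$-formula, so by Lemma~\ref{le:forallst-formulas} we have $\phi(x,y)^{\D}\equiv\phi(x,y)$, with both the $\existsst$- and the $\forallst$-tuple in $\phi(x,y)^{\D}$ empty. Applying clause (ix) for $\existsst$ then gives
\[
   \big(\existsst y\,\phi(x,y)\big)^{\D}\;\equiv\;\existsst z^{\tau^*}\,\exists z'\in z\,\phi(x,z'),
\]
still with an empty $\forallst$-tuple, and then clause (viii) for $\forallst$ yields
\[
   \big(\forallst x\,\existsst y\,\phi(x,y)\big)^{\D}\;\equiv\;\existsst Z^{(\sigma\to\tau^*)^*}\,\forallst x\,\big(\exists z'\in Z[x]\,\phi(x,z')\big),
\]
where $\sigma,\tau$ are the types of $x,y$. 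Thus the existential block of the interpretation is the single sequence-function variable $Z$ and the universal block is the single variable $x$, while the matrix $\exists z'\in Z[x]\,\phi(x,z')$ is internal.

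Next I would apply Theorem~\ref{soundnessDst} directly: the auxiliary principles $\I,\NCR,\HAC,\HGMP,\HIP_{\forallst}$ occurring in the hypothesis are exactly those allowed there, and $\Delta_{\intern}$ is a set of internal sentences, so the theorem produces a closed term $Z_0$ of $\Tstar$ with
\[
   \ehastar+\Delta_{\intern}\;\vdash\;\forall x\,\big(\exists z'\in Z_0[x]\,\phi(x,z')\big).
\]
Finally I would set $t:=\lambda x.\,Z_0[x]$, which is again a closed term of $\Tstar$ of type $\sigma\to\tau^*$ by the interdefinability of finite sequence application and ordinary application noted after the definition of $\Lambda$-abstraction; since $\ehastar\vdash Z_0[x]=t(x)$, this rewrites the displayed formula as $\ehastar+\Delta_{\intern}\vdash\forall x\,\exists y\in t(x)\,\phi(x,y)$, as desired.

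The proof is therefore essentially bookkeeping on top of Theorem~\ref{soundnessDst}, and there is no real obstacle. The only points requiring a little attention are the correct tracking of the (empty versus singleton) existential and universal tuples through clauses (viii) and (ix) — so that the extracted datum is a single closed term of sequence-function type rather than an honest tuple — and the observation that the matrix produced is internal, which is precisely what allows the conclusion to be stated over $\ehastar+\Delta_{\intern}$ rather than merely over $\ehaststar+\Delta_{\intern}$.
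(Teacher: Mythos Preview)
Your proposal is correct and follows essentially the same approach as the paper: compute the $\D$-interpretation of $\forallst x\,\existsst y\,\phi(x,y)$, apply the soundness theorem to extract a closed term $s$ (your $Z_0$), and set $t:=\lambda x.\,s[x]$. You simply spell out the clause-by-clause unfolding of the interpretation a bit more explicitly than the paper does.
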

\begin{proof}
Since
\[
\big(\forallst x \existsst y \, \phi (x,y)\big)^{\D} \equiv \existsst Y \forallst x \exists y \in Y[x] \, \phi(x,y)
\]
it follows from the soundness theorem of the $\D$-interpretation that there is a closed term $s$ such that
\[
    \ehastar +\Delta_{\intern} \vdash \forall x \exists y \in s[x]\, \phi (x,y),
\]
and so we can let $t:=\lambda x \, . \, s[x]$.
\end{proof}

\begin{remark}
Probably it is clear by now that our functional interpretation has some striking similarities with the bounded functional interpretation due to Ferreira and Oliva \cite{ferreiraoliva05}. Also there the authors work with two types of quantifiers, get an interpretation whose matrix is upwards closed in the first component (albeit with respect to a different ordering), interpret implications \emph{\`a la} Diller-Nahm and have some similar looking characteristic principles, like a monotone axiom of choice. But, still, the precise relationship is not entirely clear to us, because we are now comparing the external quantifiers with the unbounded quantifiers in the bounded functional interpretation, which is not very natural. These issues deserve to be further investigated.
\end{remark}

\subsection{Discussion}

It follows from the soundness of the $\D$-interpretation (Theorem \ref{soundnessDst}) that it can be used to eliminate nonstandard principles, like overspill, realization and idealization, from proofs. It also allows one to eliminate underspill, since we have the following result (recall that $\R$ is the realization principle from Section 4.1):
\begin{prop} \label{USfromRHGMP} We have
\[ \ehaststar + \R + \HGMP \vdash \US, \]
and therefore the underspill principle $\US$ is eliminated by the $\D$-interpretation.
\end{prop}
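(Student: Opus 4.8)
The plan is to derive $\US$ from $\R$ and $\HGMP$ by manipulating the hypothesis of underspill into a form where the realization principle applies, and then cleaning up the resulting sequence of candidates using $\HGMP$. Recall that $\US$ reads $\forall x^\sigma\,(\lnot\st(x)\to\varphi(x))\to\existsst x^\sigma\,\varphi(x)$ with $\varphi$ internal. First I would observe that, working in $\ehaststar$, the hypothesis $\forall x\,(\lnot\st(x)\to\varphi(x))$ can be rewritten: for each $y^{\sigma^*}$, if $x\notin y$ for all standard... more carefully, one shows $\forall x\,(\lnot\st(x)\to\varphi(x))$ is equivalent to $\forall x\,(x = x \to (\lnot\st(x)\to\varphi(x)))$, and using the intuitionistic equivalence between $\lnot\st(x)$ and something manageable, rephrase the premise as $\forall y^\tau\,\existsst x^{\sigma}\,\psi(x,y)$ for a suitable internal $\psi$, so that $\R$ can be invoked.

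The key trick is the standard one for underspill: the statement ``$x$ is nonstandard'' is, for type $0$ at least, $\forall y\,(\st(y)\to x\neq y)$ — but for higher types $\st$ is not so simply expressed, so instead I would exploit that $\lnot\st(x)$ appears only negatively. Concretely, from $\forall x\,(\lnot\st(x)\to\varphi(x))$ one gets, for every finite standard sequence $s^{\sigma^*}$, that $\forall x\,(x\notin s \to \varphi(x))$ need not hold, but rather: $\forall x\,(\varphi(x)\lor x\in s\lor \lnot\lnot(x\notin s \land \varphi(x)))$... This is getting delicate; the cleaner route is to first apply $\HGMP$-style reasoning to the internal part. Let me instead follow the structure suggested by the name: $\R$ says $\forall y\,\existsst x\,\varphi'(x,y)\to\existsst x^{\sigma^*}\forall y\,\exists x'\in x\,\varphi'(x',y)$. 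The plan is to show $\ehaststar + \HGMP \vdash \big(\forall x\,(\lnot\st(x)\to\varphi(x))\big)\to \forall y^0\,\existsst x^\sigma\,(\text{``}x\text{ codes an attempt''}\to\varphi(x))$, apply $\R$ to land a standard finite list of candidates, and then use $\HGMP$ once more (with $\psi \equiv \varphi$, exploiting that the premise of $\HGMP$ is about pushing a standard-bounded universal inside) to extract from this list a single standard witness realizing $\varphi$, giving $\existsst x\,\varphi(x)$.

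The main obstacle I expect is the first step: massaging $\forall x\,(\lnot\st(x)\to\varphi(x))$ into the exact syntactic shape $\forall y\,\existsst x\,\psi(x,y)$ that $\R$ requires, intuitionistically and uniformly, since $\lnot\st$ does not unfold cleanly at higher types and one cannot use classical logic. I would handle this by noting that $\lnot\st(x)$ together with the hypothesis forces $\varphi$ to hold ``cofinally'', and more precisely: for any standard $s^{\sigma^*}$, the hypothesis yields $\lnot\forall x\,(x\in s)$ is irrelevant, but $\lnot\st(x)\to\varphi(x)$ combined with $\HGMP$ applied to the internal formula $\forall x' \in s\,(x'\neq x) \to \varphi(x)$ (which holds for all $x$ by the hypothesis, since $\forall x'\in s\,(x'\neq x)$ is one way $x$ fails to be among the standard elements listed by $s$) should, after applying the version of $\HGMP$ that moves a standard-finite universal bound, produce the needed existential form. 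Once $\R$ is applied and a standard sequence of candidates is obtained, the final extraction of a genuine standard witness is routine: among finitely many standard candidates, one works by $\HGMP$ or by a direct case distinction (the list is standard and finite, so $\existsst$ holds for one of its standard elements), closing the argument.
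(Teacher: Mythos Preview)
Your overall shape (use $\HGMP$ to prepare an application of $\R$) is the same as the paper's, but two points are off, and the second is a genuine gap.

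First, the rewriting you hesitate over works uniformly at every type: $\lnot\st(x^\sigma)\leftrightarrow\forallst y^\sigma\,(y\neq x)$ is provable in $\ehaststar$ for arbitrary $\sigma$ (if $\st(x)$, instantiate $y:=x$; conversely, if $\st(y)$ and $y=x$ then $\st(x)$ by the first $\Tst$-axiom). So the hypothesis of $\US$ becomes
\[
\forall x^\sigma\,\big(\forallst y^\sigma\,(y\neq x)\to\varphi(x)\big),
\]
and a single application of $\HGMP$ inside the $\forall x$ gives
\[
\forall x^\sigma\,\existsst y^{\sigma^*}\,\big(\forall y'\in y\,(y'\neq x)\to\varphi(x)\big).
\]
Now $\R$ applies directly; there is no obstacle at higher types, and no dummy $\forall y^0$ is needed.

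Second, your final step is backwards. After $\R$ (using that the inner implication is upwards closed in $y$, so the Herbrand disjunction collapses by concatenation) you obtain a single standard sequence $y^{\sigma^*}$ with
\[
\forall x^\sigma\,\big(\forall y'\in y\,(y'\neq x)\to\varphi(x)\big).
\]
The witness for $\existsst x\,\varphi(x)$ is then any standard $x$ \emph{not occurring in} $y$: since $y$ is a standard finite sequence and every type has infinitely many provably distinct standard elements, such an $x$ exists, and for it the antecedent $\forall y'\in y\,(y'\neq x)$ holds, yielding $\varphi(x)$. You were instead trying to pick the witness from \emph{inside} the list and extract it via another $\HGMP$ or case analysis; that cannot work, because nothing guarantees $\varphi$ holds of any element of $y$.
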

\begin{proof}
We reason in $\ehaststar + \R + \HGMP$. Assume $\forall x^\sigma ( \lnot \st(x) \to \varphi(x))$. Our aim is to find a standard $x$ such that $\varphi(x)$ holds.

$\forall x^\sigma \big( \lnot \st(x) \to \varphi(x) \big)$ is equivalent to $\forall x^\sigma \big( \, \forallst y^\sigma (y \not= x) \to \varphi(x) \, \big)$, which, using $\HGMP$, we can rewrite as
\[ \forall x^\sigma \, \existsst y^{\sigma^*} \big( \forall y' \in y (y' \not= x) \to \varphi(x) \big). \]
It now follows from $\R$ that there is a standard $y$ of type $\sigma^*$ such that
\[ \forall x^\sigma \big( \forall y' \in y (y' \not= x) \to \varphi(x) \big). \]
So if we choose $x$ to be a standard element of type $\sigma$ different from all $y' \in y$, then $\varphi(x)$ will hold.
\end{proof}

We also have:
\begin{prop}
The system $\HH :\equiv \ehaststar + \I + \NCR + \HAC + \HGMP +  \HIP_{\forallst}$ is closed under both transfer rules, $\TRA$ and $\TRE$.
\end{prop}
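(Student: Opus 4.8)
The plan is to treat the two rules separately, in each case passing to the standard side by means of the functional interpretation, performing the transfer trivially there, and then returning. For $\TRA$: suppose $\HH\vdash\forallst x\,\varphi(x)$ with $\varphi$ internal. Since $\varphi$ is internal, $\forallst x\,\varphi(x)$ is a $\forallst$-formula, so by Lemma~\ref{le:forallst-formulas} its $\D$-interpretation is the formula itself; in the form $\existsst\tup x\forallst\tup y\,\phi_{\D}$ of Theorem~\ref{soundnessDst} the tuple $\tup x$ is empty and the matrix is just $\varphi$. Applying Theorem~\ref{soundnessDst} with $\Delta_{\intern}=\emptyset$ the tuple of extracted terms is therefore empty, and the conclusion reads $\ehastar\vdash\forall x\,\varphi(x)$; hence $\HH\vdash\forall x\,\varphi(x)$, since $\ehastar\subseteq\HH$. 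Any further free variables of $\varphi$ ride along as parameters.

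For $\TRE$ the $\D$-interpretation is of no direct help: if $\varphi$ is internal then so is $\exists x\,\varphi(x)$, which the interpretation leaves untouched and from which it extracts no witness. Instead I would first use that $\HH$ is conservative over $\ehastar$ (the Corollary to Theorem~\ref{soundnessDst}): from $\HH\vdash\exists x\,\varphi(x)$ we get $\ehastar\vdash\exists x\,\varphi(x)$, hence $\eha\vdash\exists x\,\varphi(x)$. The crucial step is then to extract, from a proof \emph{in the standard system}, a Herbrand disjunction, i.e.\ finitely many closed terms $t_1,\dots,t_k$ of $\Tstar$ with
\[
     \ehastar\vdash\varphi(t_1)\lor\dots\lor\varphi(t_k),\qquad\text{equivalently}\qquad\ehastar\vdash\exists x\in\langle t_1,\dots,t_k\rangle\,\varphi(x).
\]
Granting this, we are done: each $t_i$ is a closed term, hence standard by the $\Tst$-axioms; so $\langle t_1,\dots,t_k\rangle$ is standard by Lemma~\ref{presstandardness}; the element of it witnessing $\varphi$ is then standard by Lemma~\ref{elemstsetset}; and therefore $\ehaststar\vdash\existsst x\,\varphi(x)$ by the $\EQ$-axioms, whence $\HH\vdash\existsst x\,\varphi(x)$.

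The main obstacle is precisely this Herbrand-disjunction (term-existence) property for $\ehastar$. I expect to obtain it by running modified realizability on the standard system: a proof of $\exists x\,\varphi(x)$ yields closed terms $t_0,\tup s$ of $\Tstar$ such that $\ehastar$ proves that $\tup s$ realizes $\varphi(t_0)$, and, $\varphi$ being internal, one then has to check that $t_0$ — or a short list of candidates read off from the derivation — already witnesses $\varphi$ provably in $\ehastar$. The delicate point is the familiar one of passing from ``realized'' back to ``true'' in a fully extensional setting, for which one may first have to prenex or otherwise normalise $\varphi$; this, rather than the nonstandard bookkeeping, is where the substance of the argument lies.
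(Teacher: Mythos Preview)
Your treatment of $\TRA$ is correct and matches the paper exactly.

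For $\TRE$ your overall strategy is also the paper's: pass to $\ehastar$ by conservativity, extract a witnessing term there, and come back using that closed terms are standard. The paper, however, does not aim for a Herbrand disjunction and does not regard the extraction step as delicate: it simply invokes the \emph{existence property} of $\ehastar$ (citing the standard argument for $\eha$, which carries over unchanged), obtaining a single closed term $t\in\Tstar$ with $\ehastar\vdash\varphi(t)$, hence $\HH\vdash\varphi(t)$ and $\HH\vdash\existsst x\,\varphi(x)$.

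Your worry about ``passing from realized back to true in a fully extensional setting'' is misplaced. Ordinary modified realizability is sound for $\eha$ (and hence $\ehastar$) precisely because the extensionality axiom is purely universal ($\exists$-free) and therefore self-realizing; there is no obstacle here. To get provable truth of $\varphi(t_0)$ rather than merely its realizability, one uses the $q$-variant (modified realizability with truth), for which ``realized $\Rightarrow$ true'' holds by construction---no prenexing or normalisation of $\varphi$ is needed. So the ``substance of the argument'' you anticipate does not materialise: the existence property is a one-line citation, and a single witness (not a disjunction) already suffices.
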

\begin{proof}
If $\HH \vdash \forallst x \, \varphi(x)$, then $\ehastar \vdash \forall x \, \varphi(x)$ by soundness of the $\D$-interpretation. Since $\ehastar$ is a subsystem of $\HH$, it follows that $\HH \vdash \forall x \, \varphi(x)$. This shows that $\HH$ is closed under $\TRA$.

If $\HH \vdash \exists x \, \varphi(x)$, then $\ehastar \vdash \exists x \, \varphi(x)$ by conservativity of $\HH$ over $\ehastar$. Since $\ehastar$ has the existence property (the usual argument for the existence property of $\eha$, as in \cite[Corollary 5.24]{Kohlenbach08}, for instance, carries over to $\ehastar$), it follows that there is a term $t$ in $\Tstar$ such that $\ehastar \vdash \varphi(t)$. Again, because $\ehastar$ is a subsystem of $\HH$, we have $\HH \vdash \varphi(t)$ as well, and since all terms from $\Tstar$ are provably standard in $\HH$, we have $\HH \vdash \existsst x \, \varphi(x)$. This shows that $\HH$ is closed under $\TRE$ as well.
\end{proof}

Therefore our functional interpretation $\D$ meets all the benchmarks that we discussed in Section 4.

\section{The system $\epaststar$ and negative translation}

By combining the functional interpretation from the previous section with negative translation we can obtain conservation and term extraction results for classical systems as well. We will work out the details in this and the next section.

First, we need to set up a suitable classical system $\epaststar$. It will be an extension of $\epastar$, which is $\ehastar$ with the law of excluded middle added for all formulas. When working with classical systems, we will often take the logical connectives $\lnot, \lor, \forall$ as primitive and regard the others as defined. In a similar spirit, the language of $\epaststar$ will be that of $\epastar$ extended just with unary predicates $\st^\sigma$ for every type $\sigma \in \Tpstar$; the external quantifiers $\forallst, \existsst$ are regarded as abbreviations:
\begin{eqnarray*}
\forallst x \, \Phi(x) & :\equiv &  \forall x ( \, \st(x)\rightarrow\Phi(x) \, ),\\
\existsst x \, \Phi(x) & :\equiv & \exists x (\, \st(x)\wedge\Phi(x) \, ).
\end{eqnarray*}

\begin{dfn}[$\epaststar$] The system $\epaststar$ is
\[\epaststar := \epastar + \Tst + \IA^{\st{}} \]
where
\begin{itemize}
\item $\Tst$ consists of:
\begin{enumerate}
\item the schema $\st(x) \land x = y \to \st(y)$,
\item a schema providing for each closed term $t$ in $\Tstar$ the axiom $\st(t)$,
\item the schema $\st(f)\wedge\st(x)\rightarrow\st(fx)$.
\end{enumerate}
\item $ \IA^{\st{}}$ is the external induction axiom:
\[
\IA^{\st{}} \quad : \quad\big(\Phi(0)\wedge\forallst n^0 (\Phi(n)\rightarrow\Phi(n+1) )\big)\rightarrow\forallst n^0 \Phi(n).
\]
\end{itemize}
Again we warn the reader that the induction axiom from $\epastar$
\[ \quad\big(\varphi(0)\wedge\forall n^0 (\varphi(n)\rightarrow\varphi(n+1) )\big)\rightarrow\forall n^0 \varphi(n) \]
is supposed to apply to internal formulas $\varphi$ only.
\end{dfn}

As for $\ehaststar$, we have:
\begin{prop}
If a formula $\Phi$ is provable in $\epaststar$, then its internalization $\Phi^{\intern}$ is provable in $\epastar$. Hence $\epaststar$ is a conservative extension of $\epastar$ and $\epa$.
\end{prop}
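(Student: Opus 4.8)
The plan is to mimic the proof of Proposition~\ref{conservativeint}: show that the internalization operation $(-)^{\intern}$ carries $\epaststar$-provable formulas to $\epastar$-provable ones, by checking it on the axioms and noting that it commutes with the rules of classical predicate logic. Since $\Phi^{\intern} \equiv \Phi$ for internal $\Phi$, this immediately yields conservativity of $\epaststar$ over $\epastar$, and then over $\epa$, because $\epastar$ was already observed to be a definitional extension of $\epa$.

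First I would record the elementary behaviour of internalization. It commutes with $\lnot$, $\lor$, $\forall$ (the connectives taken as primitive in the classical setting), and by unfolding the abbreviations $\forallst x\,\Phi \equiv \forall x(\st(x)\to\Phi)$ and $\existsst x\,\Phi \equiv \exists x(\st(x)\land\Phi)$ one sees that $(\forallst x\,\Phi)^{\intern}$ is $\forall x(x=x\to\Phi^{\intern})$, which is provably equivalent in $\epastar$ to $\forall x\,\Phi^{\intern}$ (and dually for $\existsst$). In particular $(-)^{\intern}$ always produces an internal formula and acts as the identity on internal formulas. From this it follows that the internalization of any instance of a logical axiom schema of classical predicate logic is again (provably) such an instance, and that the logical rules are preserved, so it only remains to treat the nonlogical axioms of $\epaststar$. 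These are the axioms of $\epastar$, which are internal and hence internalize to themselves; the schemata $\Tst$, whose internalizations are $x=x\land x=y\to y=y$, $\ t=t$, and $f=f\land x=x\to fx=fx$, all provable from the equality axioms; and the external induction axiom, whose internalization is
\[
\big(\Phi^{\intern}(0)\wedge\forall n^0\,(\Phi^{\intern}(n)\to\Phi^{\intern}(n+1))\big)\to\forall n^0\,\Phi^{\intern}(n),
\]
an instance of the ordinary induction axiom of $\epastar$ since $\Phi^{\intern}$ is internal. Hence $\epaststar\vdash\Phi$ implies $\epastar\vdash\Phi^{\intern}$.

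For the conservativity claim itself: if $\Phi$ is a formula in the language of $\epastar$ (i.e.\ internal) and $\epaststar\vdash\Phi$, then $\Phi^{\intern}\equiv\Phi$, so $\epastar\vdash\Phi$; and since $\epastar$ is a definitional extension of, hence conservative over, $\epa$, the system $\epaststar$ is conservative over $\epa$ as well. I do not anticipate a genuine obstacle; the one point worth flagging is precisely that the internalization of $\IA^{\st{}}$ falls under ordinary induction in $\epastar$ only because internalizations are internal formulas, so the restriction of $\epastar$-induction to internal formulas causes no difficulty.
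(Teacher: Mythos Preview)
Your proposal is correct and follows exactly the approach the paper intends: the paper states this proposition with the remark ``As for $\ehaststar$, we have'' and relies on the proof of Proposition~\ref{conservativeint}, which is the one-line observation that the internalizations of the axioms are provable in the base system. You have simply spelled out those details, including the key point that $(\IA^{\st})^{\intern}$ is an instance of ordinary induction because $\Phi^{\intern}$ is internal.
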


We will now show how negative translation provides an interpretation of $\epaststar$ in $\ehaststar$. Various negative translations exist, with the one due to G\"odel and Gentzen being the most well-known. Here, we work with two variants, the first of which is due to Kuroda \cite{kuroda51}.

\begin{dfn}[Kuroda's negative translation for $\epaststar$]
For an arbitrary formula $\Phi$ in the language of $\epast$, we define
its Kuroda negative translation in $\ehaststar$ as
\[
\Phi^\ku\ :\equiv\ \neg\neg\Phi_\ku,
\]
where $\Phi_\ku$ is defined inductively on the structure of $\Phi$ as follows:
\begin{align*}
 \Phi_\ku   &:\equiv \Phi\quad \text { for atomic formulas $\Phi$}, \\
 \big(\neg\Phi\big)_\ku  &:\equiv \neg\Phi_\ku,\\
 \big(\Phi\vee\Psi\big)_\ku  &:\equiv  \Phi_\ku\vee \Psi_\ku,\\
 \big(\forall x \, \Phi(x)\big)_\ku  &:\equiv  \forall x \, \neg\neg\Phi_\ku(x).
\end{align*}
\end{dfn}

\begin{thm} \label{soundnesskuroda} $\epaststar \vdash \Phi \leftrightarrow \Phi^\ku$ and if $\epaststar + \Delta \vdash\Phi$ then $\ehaststar + \Delta^\ku \vdash\Phi^\ku$.
\end{thm}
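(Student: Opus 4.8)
The plan is to prove this by a standard induction on the structure of the derivation, following the well-known pattern for soundness of Kuroda's negative translation (see, e.g., Kohlenbach's book), while carefully handling the new ingredients of $\epaststar$: the predicates $\st^\sigma$, the $\Tst$-schemata, and the external induction axiom $\IA^{\st}$. First I would establish the equivalence $\epaststar \vdash \Phi \leftrightarrow \Phi^\ku$; since $\epaststar$ is classical, one proves by induction on $\Phi$ that $\Phi \leftrightarrow \Phi_\ku$ and that $\Phi_\ku$ is stable (i.e.\ $\lnot\lnot\Phi_\ku \to \Phi_\ku$), from which $\Phi \leftrightarrow \lnot\lnot\Phi_\ku = \Phi^\ku$ follows. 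Here the base case includes atomic formulas of the form $\st(x)$, which are simply passed through unchanged; as $\epaststar$ is classical, stability of atomic formulas is immediate, and the remaining connectives $\lnot, \lor, \forall$ are handled exactly as in the purely arithmetical case — the double negations inserted in front of the matrix of $\forall$ and at the outermost level are precisely what is needed.

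Next I would turn to the main implication: if $\epaststar + \Delta \vdash \Phi$, then $\ehaststar + \Delta^\ku \vdash \Phi^\ku$. This goes by induction on the length of the $\epaststar$-derivation. For the logical axioms and rules of classical logic (formulated with $\lnot,\lor,\forall$ primitive), the verification is identical to the standard one — intuitionistic logic plus the double-negations supplied by the Kuroda translation suffice, and the law of excluded middle $\Phi \lor \lnot\Phi$ translates to $\lnot\lnot(\Phi_\ku \lor \lnot\Phi_\ku)$, which is an intuitionistic theorem. For the nonlogical axioms one checks each one. The internal axioms of $\epastar$ (equality, successor, combinators, recursors, internal induction) are handled by the classical case of the purely arithmetical Kuroda translation, noting that their translations are provable in $\ehastar \subseteq \ehaststar$. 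For the $\Tst$-schemata: $\bigl(\st(x) \land x = y \to \st(y)\bigr)^\ku$ and $\bigl(\st(f) \land \st(x) \to \st(fx)\bigr)^\ku$ reduce, after pushing the double negations around, to the original (stable, being essentially atomic) statements, which hold in $\ehaststar$; and $\bigl(\st(t)\bigr)^\ku$ is just $\lnot\lnot\st(t)$, which follows from the $\Tst$-axiom $\st(t)$. Sentences from $\Delta$ contribute their translations $\Delta^\ku$ to the hypotheses, so they are available on the right-hand side by assumption.

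The step I expect to be the main obstacle is the external induction axiom $\IA^{\st}$. Here one must show that $\ehaststar$ proves the Kuroda translation of
\[
\bigl(\Phi(0)\wedge\forallst n^0 (\Phi(n)\rightarrow\Phi(n+1) )\bigr)\rightarrow\forallst n^0 \Phi(n).
\]
The difficulty is that the translation of this formula is not literally another instance of $\IA^{\st}$ because of the interspersed double negations (in particular, $\forallst n$ unfolds to $\forall n(\st(n) \to \ldots)$ and then acquires a $\lnot\lnot$ in front of its matrix under the $(\forall x \, \cdot)_\ku$ clause). The way to handle this is to apply the external induction axiom of $\ehaststar$ to the formula $\Theta(n) :\equiv \lnot\lnot \Phi_\ku(n)$ (or more precisely to whatever formula results from the translation clauses as the body of the outermost $\forallst n$), using that $\IA^{\st}$ in $\ehaststar$ is available for \emph{arbitrary} formulas, including those with double negations and with free occurrences of $\st$. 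One then verifies intuitionistically that the translated hypotheses yield $\Theta(0)$ and $\forallst n(\Theta(n) \to \Theta(n+1))$ — the latter requiring a small manipulation moving double negations across the implication, which is valid intuitionistically — and concludes $\forallst n \, \Theta(n)$, which unwinds to the translated conclusion. A minor technical point to get right along the way is bookkeeping with the external quantifiers $\forallst, \existsst$, which in the classical setting are mere abbreviations; one should check that the Kuroda clauses interact correctly with these abbreviations (e.g.\ that $(\forallst x \, \Phi)_\ku$ is intuitionistically equivalent to $\forallst x \, \lnot\lnot\Phi_\ku$), so that the induction hypothesis can be applied to the genuinely primitive subformulas.
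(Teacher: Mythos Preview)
Your proposal is correct and follows essentially the same approach as the paper: both argue by induction on the derivation, defer the classical logic and $\epastar$ axioms to the standard treatment (the paper cites Kohlenbach's Proposition~10.3), and handle the new axioms $\Tst$ and $\IA^{\st}$ by observing that their Kuroda translations are provable in $\ehaststar$ using (an instance of) the same schema. Your write-up is considerably more detailed than the paper's terse proof --- in particular your unpacking of the $\IA^{\st}$ case by applying external induction to $\Theta(n):\equiv\lnot\lnot\Phi_\ku(n)$ spells out exactly what the paper means by ``using the same instance of $\IA^{\st}$.''
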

\begin{proof}
It is clear that, classically, $\Phi$, $\Phi_\ku$ and $\Phi^\ku$ are all equivalent. The second statement is proved by induction on the proof of $\epaststar + \Delta \vdash\Phi$. For the cases of the axioms and rules of classical logic and $\epastar$, see, for instance, \cite[Proposition 10.3]{Kohlenbach08}. As the Kuroda negative translation of every instance of $\Tst$ or $\IA^{\st{}}$ is provable in $\ehaststar$ using the same instance of $\Tst$ or $\IA^{\st{}}$, the statement is proved.
\end{proof}

It will turn out to be convenient to introduce a second negative translation, extracted from the work of Krivine by Streicher and Reus (see \cite{krivine90, streicherreus98, streicherkohlenbach07}). This translation will interpret $\epaststar$ into $\ehanststar$ (see Remark \ref{hanst}).

\begin{dfn}[Krivine's negative translation for $\epaststar$]
For an arbitrary formula $\Phi$ in the language of $\epaststar$, we define its Krivine negative translation in $\ehanststar$ as
\[
\Phi^\kr\ :\equiv\ \neg\Phi_\kr,
\]
where $\Phi_\kr$ is defined inductively on the structure of $\Phi$ as follows
\begin{align*}
 \phi_\kr   &:\equiv \neg\phi\quad \text { for an internal atomic formula $\phi$}, \\
 \st(x)_\kr  &:\equiv \nst(x), \\
 \big(\neg\Phi\big)_\kr  &:\equiv \neg\Phi_\kr,\\
 \big(\Phi\vee\Psi\big)_\kr  &:\equiv \Phi_\kr\wedge \Psi_\kr,\\
 \big(\forall x \, \Phi(x)\big)_\kr  &:\equiv  \exists x \, \Phi_\kr(x).\\
\end{align*}
\end{dfn}

\begin{thm}\label{l:kukr}
For every formula $\Phi$ in the language of $\epaststar$, we have:
\begin{enumerate}
\item $\ehanststar\vdash\Phi^\kr\leftrightarrow\Phi^\ku$.
\item If $\epaststar + \Delta \vdash \Phi$, then $\ehanststar + \Delta ^\kr \vdash \Phi^\kr$.
\end{enumerate}
\end{thm}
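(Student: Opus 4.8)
The plan is to prove both parts essentially by combining Theorem~\ref{soundnesskuroda} (soundness of Kuroda's translation) with part 1 of the present statement, together with Remark~\ref{hanst}, which tells us that $\ehanststar$ proves an analogue of everything we need about $\nst$. For part~1, I would proceed by induction on the structure of $\Phi$, showing $\ehanststar \vdash \Phi_\kr \leftrightarrow \neg\Phi_\ku$ (from which $\Phi^\kr \equiv \neg \Phi_\kr \leftrightarrow \neg\neg\Phi_\ku \equiv \Phi^\ku$ follows, using that $\ehanststar$ already sits inside a classical-enough fragment for double negations on the relevant formulas — more precisely, $\Phi_\ku$ is of the syntactic shape where $\neg\neg\Phi_\ku \leftrightarrow \Phi_\ku$ need not hold intuitionistically, so I will instead directly target $\Phi^\kr \leftrightarrow \Phi^\ku$, i.e. $\neg\Phi_\kr \leftrightarrow \neg\neg\Phi_\ku$, which reduces to $\Phi_\kr \leftrightarrow \neg\Phi_\ku$ after one intuitionistically valid manipulation). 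The base cases are: for an internal atomic $\phi$, $\phi_\kr \equiv \neg\phi$ and $\phi_\ku \equiv \phi$, so $\phi_\kr \leftrightarrow \neg\phi_\ku$ is trivial; for $\st(x)$, we have $\st(x)_\kr \equiv \nst(x)$ and $\st(x)_\ku \equiv \st(x)$, and the axiom $\forall x(\nst(x) \leftrightarrow \neg\st(x))$ from Remark~\ref{hanst} gives exactly $\st(x)_\kr \leftrightarrow \neg\st(x)_\ku$.

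For the induction step I would check the three remaining connectives (recall that with $\lnot,\lor,\forall$ primitive the others are defined, so these suffice). For negation: $(\neg\Phi)_\kr \equiv \neg\Phi_\kr$ and $(\neg\Phi)_\ku \equiv \neg\Phi_\ku$; by induction hypothesis $\Phi_\kr \leftrightarrow \neg\Phi_\ku$, hence $\neg\Phi_\kr \leftrightarrow \neg\neg\Phi_\ku$, and since $\neg\neg\neg A \leftrightarrow \neg A$ intuitionistically this matches $\neg(\neg\Phi)_\ku \leftrightarrow (\neg\Phi)_\kr$ after the standard manipulation. For disjunction: $(\Phi \lor \Psi)_\kr \equiv \Phi_\kr \land \Psi_\kr$ and $(\Phi \lor \Psi)_\ku \equiv \Phi_\ku \lor \Psi_\ku$; using the induction hypotheses and the intuitionistic equivalence $\neg(A \lor B) \leftrightarrow \neg A \land \neg B$ (with $A = \Phi_\ku$, $B = \Psi_\ku$) we get $\Phi_\kr \land \Psi_\kr \leftrightarrow \neg\Phi_\ku \land \neg\Psi_\ku \leftrightarrow \neg(\Phi_\ku \lor \Psi_\ku)$. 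For the universal quantifier: $(\forall x\,\Phi(x))_\kr \equiv \exists x\,\Phi_\kr(x)$ and $(\forall x\,\Phi(x))_\ku \equiv \forall x\,\neg\neg\Phi_\ku(x)$; by induction hypothesis $\Phi_\kr(x) \leftrightarrow \neg\Phi_\ku(x)$, so $\exists x\,\Phi_\kr(x) \leftrightarrow \exists x\,\neg\Phi_\ku(x)$, and then the intuitionistically valid implications relating $\exists x\,\neg A(x)$, $\neg\forall x\,A(x)$ and $\neg\forall x\,\neg\neg A(x)$ do the job (the direction $\exists x\,\neg A(x) \to \neg\forall x\,A(x)$ is constructive; in the presence of the outer $\neg$ we only need $\neg\exists x\,\neg A(x) \leftrightarrow \neg\neg\forall x\,\neg\neg A(x)$, which is intuitionistically fine). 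These are the usual bookkeeping lemmas for Krivine's translation, and the only new ingredient is the treatment of $\st$ via the defining axiom for $\nst$.

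Part~2 then follows immediately: if $\epaststar + \Delta \vdash \Phi$, then by Theorem~\ref{soundnesskuroda} we have $\ehaststar + \Delta^\ku \vdash \Phi^\ku$, hence a fortiori $\ehanststar + \Delta^\ku \vdash \Phi^\ku$ (since $\ehanststar$ extends $\ehaststar$); by part~1 applied to each sentence of $\Delta$ and to $\Phi$, the hypotheses $\Delta^\ku$ are provably equivalent in $\ehanststar$ to $\Delta^\kr$, and $\Phi^\ku$ is provably equivalent to $\Phi^\kr$, so $\ehanststar + \Delta^\kr \vdash \Phi^\kr$.

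\textbf{Main obstacle.} I expect the only delicate point to be keeping the intuitionistic double-negation bookkeeping straight in the induction for part~1: because $\Phi_\ku$ is \emph{not} in general intuitionistically stable, one must be careful to prove the correct equivalence $\Phi^\kr \leftrightarrow \Phi^\ku$ (i.e. with the outer negations in place) rather than the naive $\Phi_\kr \leftrightarrow \neg\Phi_\ku$, and to verify at each connective that the required implications are among the intuitionistically valid De Morgan / quantifier-negation laws. Everything involving $\st$ and $\nst$ is handled uniformly by the single axiom from Remark~\ref{hanst}, so the nonstandard layer adds no real difficulty beyond this.
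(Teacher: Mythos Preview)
Your approach is correct and is exactly the paper's: item~1 by induction on the structure of $\Phi$, item~2 immediately from item~1 together with Theorem~\ref{soundnesskuroda}. One small clarification worth making explicit: the induction invariant that actually goes through intuitionistically is $\neg\Phi_\kr \leftrightarrow \neg\neg\Phi_\ku$ (equivalently $\Phi^\kr \leftrightarrow \Phi^\ku$), not the stronger $\Phi_\kr \leftrightarrow \neg\Phi_\ku$ you start with---the latter fails in the $\forall$-case precisely for the reason you note, and it does \emph{not} ``reduce'' to the former by an intuitionistically valid step (only the forward implication does). You correctly land on the right invariant in your discussion of the obstacle, so this is only a matter of stating it cleanly from the outset.
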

\begin{proof}
Item 1 is easily proved by induction on the structure of $\Phi$. Item 2 follows from item 1 and Theorem \ref{soundnesskuroda}.
\end{proof}

\section{A functional interpretation for $\epaststar$} \label{s:Shoenfield}

We will now combine negative translation and our functional interpretation $\D$ to obtain a functional interpretation of the classical system $\epaststar$.

\subsection{The interpretation}

\begin{dfn} \label{d:stS} ($\Sh$-interpretation for $\epaststar$.) To each formula $\Phi(\tup{a})$ with free variables $\tup{a}$ in the language of $\epaststar$ we associate its {$\Sh$-interpretation}
\[
\Phi^\Sh(\tup{a}):\equiv\forallst \tup{x}\,\existsst \tup{y}\,
\phi_\Shb(\tup{x},\tup{y}, \tup{a})
\text{,}
\]
where $\phi_\Shb$ is an internal formula. Moreover,  $\tup{x}$ and $\tup{y}$ are tuples of variables whose length and types depend only on the logical structure of $\Phi$. The interpretation of the formula is defined inductively on its
structure. If
\[
\Phi^\Sh(\tup{a}):\equiv\forallst \tup{x}\,\existsst \tup{y}\, \
\phi_\Shb(\tup{x},\tup{y}, \tup{a}) \
\text{ and } \ \Psi^\Sh(\tup{b}):\equiv\forallst \tup{u}\,\existsst \tup{v}\, \
\psi_\Shb(\tup{u},\tup{v}, \tup{b}),
\]
then
\begin{enumerate}
\item[(i)] $\phi^\Sh
:\equiv \phi$ for atomic internal
$\phi(\tup{a}),$
\item[(ii)] $\big(\st(z) \big)^\Sh :\equiv \existsst x \, ( z = x)$,
\item[(iii)] $(\neg \Phi)^\Sh :\equiv \forallst \tup{Y} \existsst \tup{x} \,
\forall \tup y \in \tup Y[\tup x] \neg \phi_\Shb(\tup{x}, \tup y, \tup a),$
\item[(iv)] $(\Phi \vee \Psi)^\Sh :\equiv
             \forallst \tup{x},\tup{u} \existsst \tup{y},\tup{v} \,
\big(\phi_\Shb(\tup{x},\tup{y}, \tup a) \vee \psi_\Shb(\tup{u},\tup{v}, \tup b)\big),$
\item[(v)] $(\forall z \, \phi)^\Sh :\equiv \forallst \tup{x}
\existsst \tup{y} \forall z \exists {\tup y}' \in \tup y \, \phi_\Shb(\tup x,\tup{y}', z).$
\end{enumerate}
\end{dfn}

\begin{thm} \label{soundnessshoenfield} {\rm (Soundness of the $\Sh$-interpretation.)} Let $\Phi(\tup a)$ be a formula in the language of $\epaststar$ and suppose $\Phi(\tup a)^\Sh\equiv\forallst \tup x \, \existsst \tup y \, \phi(\tup x, \tup y, \tup a)$. If $\Delta_{\intern}$ is a collection of internal formulas and
\[ \epaststar + \Delta_{\intern} \vdash \Phi(\tup a), \]
then one can extract from the formal proof a sequence of closed terms $\tup t$ in $\Tstar$ such that
\[
\epastar + \Delta_{\intern} \vdash\ \forall \tup x\exists \tup y \in \tup t(\tup x)\ \phi(\tup x,\tup y, \tup a).
\]
\end{thm}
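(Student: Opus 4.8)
The plan is to factor the $\Sh$-interpretation through the $\D$-interpretation via negative translation, exactly as the classical Dialectica (Shoenfield) interpretation factors through the Gödel--Gentzen translation and the intuitionistic Dialectica. First I would verify the purely syntactic claim that for every formula $\Phi$ of $\epaststar$ the $\Sh$-interpretation of $\Phi$ agrees, up to provable equivalence in $\ehastar$ and up to harmless coding of tuples, with the $\D$-interpretation of the Kuroda negative translation $\Phi^\ku$ (equivalently, of $\Phi_\ku$), or alternatively with the $\D$-interpretation of the Krivine translation $\Phi^\kr$ into $\ehanststar$ (using Remark \ref{hanst}). Concretely, one shows by induction on the structure of $\Phi$ that $(\Phi^\ku)^{\D}$ and $\Phi^\Sh$ have the same $\existsst\forallst$-prefix over the same internal matrix; the base cases are the clauses (i)--(ii) of Definition \ref{d:stS}, which match $\D$ applied to $\neg\neg\phi$ and to $\neg\neg\,\existsst x\, z=x$, and the induction steps handle $\neg$, $\vee$ and $\forall$ — the only connectives taken as primitive classically. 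The clause (iii) for negation is where the Diller--Nahm-style bounded universal quantifier in the $\D$-clause for $\to$ reappears as the $\forall\tup y\in\tup Y[\tup x]$ in the $\Sh$-clause, and clause (v) for $\forall$ matches the $\D$-clauses (vi)/(vii) composed appropriately; this is the bookkeeping that makes the whole reduction go through.

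Granting that identification, the proof of the theorem is then short. Suppose $\epaststar + \Delta_{\intern}\vdash \Phi(\tup a)$ with $\Delta_{\intern}$ internal. By Theorem \ref{soundnesskuroda} we get $\ehaststar + \Delta_{\intern}^\ku \vdash \Phi^\ku(\tup a)$; since the members of $\Delta_{\intern}$ are internal, each $\delta^\ku$ is (provably in $\ehaststar$) equivalent to $\delta$ itself, so in fact $\ehaststar + \Delta_{\intern}\vdash\Phi^\ku(\tup a)$. Now apply the soundness of the $\D$-interpretation (Theorem \ref{soundnessDst}): from the proof we extract closed terms $\tup t$ of $\Tstar$ with $\ehastar + \Delta_{\intern}\vdash \forall\tup y'\,(\Phi^\ku)_{\D}(\tup t,\tup y',\tup a)$. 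By the syntactic identification of $(\Phi^\ku)^{\D}$ with $\Phi^\Sh$ this is exactly $\ehastar + \Delta_{\intern}\vdash\forall\tup x\,\exists\tup y\in\tup t(\tup x)\,\phi(\tup x,\tup y,\tup a)$ after the coding is unwound. Finally one passes from $\ehastar$ to $\epastar$: the formula $\forall\tup x\,\exists\tup y\in\tup t(\tup x)\,\phi$ has a decidable (indeed bounded-quantifier) matrix over the internal language, so its provability in $\ehastar+\Delta_{\intern}$ yields provability in $\epastar+\Delta_{\intern}$ a fortiori, since $\epastar$ extends $\ehastar$; no extra argument is needed in this direction.

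Alternatively — and this is probably the cleaner route to write up — one works through $\ehanststar$ using Krivine's translation: Theorem \ref{l:kukr} gives $\ehanststar + \Delta_{\intern}^\kr\vdash\Phi^\kr$, one uses the extension of the $\D$-interpretation to $\ehanststar$ sketched in Remark \ref{hanst}, and then checks that $(\Phi^\kr)^{\D}$ is provably equivalent to $\Phi^\Sh$. The advantage is that the $\kr$-clauses are sharper (no double negations), so the matching with Definition \ref{d:stS} is more transparent; one has to keep track of the $\nst$-clause $(\nst x)^{\D}$ and of the fact that, by Remark \ref{hanst}, the $\D$-soundness for $\ehanststar$ delivers terms over $\ehastar$ (or $\epastar$) just as before.

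The main obstacle is the verification that the $\Sh$-interpretation really is the composite $\D\circ(\cdot)^\ku$ (or $\D\circ(\cdot)^\kr$) up to provable equivalence — in particular getting the tuple coding and the order of the bounded quantifiers to line up in the clauses for negation and for the universal quantifier, and confirming that the monotonicity/upward-closure built into the $\D$-matrix does not get in the way when one collapses $\forallst\existsst\forallst\ldots$ prefixes back down to the two-quantifier $\forallst\existsst$ form of Definition \ref{d:stS}. Once that combinatorial lemma is in hand, the theorem is an immediate corollary of Theorems \ref{soundnessDst} and \ref{soundnesskuroda} (resp.\ \ref{l:kukr}).
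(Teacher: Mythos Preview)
Your approach is essentially the paper's: factor $\Sh$ through negative translation followed by $\D$, and your ``cleaner route'' via Krivine into $\ehanststar$ is exactly what the paper does. Two small corrections. First, your claim that $\delta^\ku\leftrightarrow\delta$ holds in $\ehaststar$ for internal $\delta$ is false in general (this needs classical logic); the repair, which the paper uses, is simply to carry $\Delta_\intern^\kr$ --- still a set of internal sentences --- through the $\D$-soundness step and invoke $\epastar\vdash\Delta_\intern^\kr\leftrightarrow\Delta_\intern$ only at the very end. Second, the key lemma (the paper's Lemma~\ref{p:ShD}) does not say that $(\Phi^\kr)^\D$ and $\Phi^\Sh$ have the \emph{same} matrix; rather, the matrix of $(\Phi_\kr)^\D$ is, provably in $\epastar$, the \emph{negation} of the matrix of $\Phi^\Sh$, and one then handles the outer negation in $\Phi^\kr=\neg\Phi_\kr$ by one more $\D$-step together with the classical step $\neg\forall\tup y\in\tup s[\tup x]\,\theta\leftrightarrow\exists\tup y\in\tup s[\tup x]\,\neg\theta$ in $\epastar$. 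The paper's remark after that lemma also explains why the detour through $\ehanststar$ with a primitive $\nst$ predicate is not optional: the type-matching in the $\st$-clause fails if one sets $\st(z)_\kr=\neg\st(z)$ directly in $\ehaststar$.
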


Our proof of this theorem relies on the following lemma:
\begin{lemma}\label{p:ShD}
Let $\Phi(\tup a)$ be a formula in the language of $\epaststar$ and assume
\begin{eqnarray*}
\Phi^\Sh & \equiv & \forallst \tup x\existsst  \tup y \, \phi(\tup x, \tup y, \tup a) \quad \mbox{and} \\
(\Phi_\kr)^\D & \equiv & \existsst \tup u \forallst \tup v \, \theta(\tup u,\tup v, \tup a).
\end{eqnarray*}
Then the tuples $\tup x$ and $\tup u$ have the same length and the variables they contain have the same types. The same applies to $\tup y$ and $\tup v$. In addition, we have
\[
\epastar\ \vdash\ \phi(\tup x, \tup y, \tup a)\leftrightarrow \lnot \theta(\tup x, \tup y, \tup a).
\]
\end{lemma}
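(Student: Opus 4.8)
The plan is to prove both assertions simultaneously by induction on the logical structure of $\Phi$, treating $\neg,\vee,\forall$ as the primitive connectives (exactly those for which both the $\kr$-translation and the $\Sh$-interpretation are defined). The key observation is that the $\Sh$-interpretation of $\Phi$ and the $\D$-interpretation of $\Phi_\kr$ are built up by \emph{dual} clauses: $\Sh$ places $\forallst$ outermost and $\existsst$ innermost, while $\D$ does the reverse, and the $\kr$-translation inserts exactly the negations needed to make these two processes mirror images of each other. So at each step I would unfold both definitions and check that the matrices are negations of one another, that the type information matches, and that the required equivalence is provable in $\epastar$ (classical logic is essential here, since we will repeatedly push negations through quantifiers and use $\lnot\lnot\theta\leftrightarrow\theta$).

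First I would handle the base cases. For an internal atomic $\phi$: $\phi^\Sh\equiv\phi$ (empty tuples $\tup x,\tup y$), while $\phi_\kr\equiv\lnot\phi$ so $(\phi_\kr)^\D\equiv\lnot\phi$ (again empty tuples), and $\epastar\vdash\phi\leftrightarrow\lnot\lnot\phi\equiv\lnot(\lnot\phi)$, i.e. $\phi\leftrightarrow\lnot\theta$ with $\theta\equiv\lnot\phi$. For $\st(z)$: by Definition~\ref{d:stS}(ii), $(\st(z))^\Sh\equiv\existsst x\,(z=x)$, which I should read as $\forallst[\,]\,\existsst x\,(z=x)$, so $\tup x$ is empty and $\tup y=x$ with matrix $z=x$. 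On the other side $\st(z)_\kr\equiv\nst(z)$, and by Remark~\ref{hanst} we set $(\nst(z))^\D\equiv\forallst x\,(x\neq z)$, so $\tup u$ is empty and $\tup v=x$ with matrix $x\neq z$; and indeed $\epastar\vdash (z=x)\leftrightarrow\lnot(x\neq z)$. The lengths and types match as required.

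For the induction step I would treat the three connectives in turn. For $\neg\Phi$: by (iii), $(\neg\Phi)^\Sh\equiv\forallst\tup Y\,\existsst\tup x\,\forall\tup y\in\tup Y[\tup x]\,\lnot\phi_\Shb(\tup x,\tup y,\tup a)$; while $(\neg\Phi)_\kr\equiv\lnot\Phi_\kr$, so $((\neg\Phi)_\kr)^\D = (\lnot\Phi_\kr)^\D$, which by the Diller--Nahm clause for implication into $\bot$ applied to $(\Phi_\kr)^\D\equiv\existsst\tup u\forallst\tup v\,\theta(\tup u,\tup v,\tup a)$ unfolds to $\existsst\tup Y\,\forallst\tup u\,(\forall\tup v\in\tup Y[\tup u]\,\theta(\tup u,\tup v,\tup a)\to\bot)$, i.e. $\existsst\tup Y\,\forallst\tup u\,\forall\tup v\in\tup Y[\tup u]\,\lnot\theta(\tup u,\tup v,\tup a)$. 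Comparing: the role of $\tup x$ is played by $\tup u$, the role of $\tup y$ by $\tup v$, the bounding sequences $\tup Y$ match in type, and by induction hypothesis $\epastar\vdash\phi_\Shb\leftrightarrow\lnot\theta$, so $\lnot\phi_\Shb\leftrightarrow\lnot\lnot\theta\leftrightarrow\theta$ — but note the outer quantifier pattern on the $\D$-side is $\existsst\tup Y\,\forallst\tup u$, whereas on the $\Sh$-side it is $\forallst\tup Y\,\existsst\tup x$; this apparent mismatch is exactly what the statement wants, since $\Phi^\Sh$ and $(\Phi_\kr)^\D$ should have their $\forallst/\existsst$ blocks \emph{swapped}, and the matrices negated, so here $(\neg\Phi)^\Sh$ has matrix $\forall\tup y\in\tup Y[\tup x]\lnot\phi_\Shb$ and $((\neg\Phi)_\kr)^\D$ has matrix $\forall\tup v\in\tup Y[\tup u]\lnot\theta$, and $\epastar$ proves $\forall\tup y\in\tup Y[\tup x]\lnot\phi_\Shb \leftrightarrow \lnot(\forall\tup v\in\tup Y[\tup u]\lnot\theta)$? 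No — I must be careful: the correct reading is that $\phi(\tup x,\tup y,\tup a)$ in the lemma's statement denotes the \emph{whole} internal matrix of $\Phi^\Sh$ including its internal bounded quantifiers, and likewise $\theta$ for $(\Phi_\kr)^\D$; so for $\neg\Phi$ the matrix $\phi_{\neg\Phi}$ is $\forall\tup y'\in\tup Y[\tup x]\lnot\phi_\Shb(\tup x,\tup y',\tup a)$ and $\theta_{\neg\Phi}$ is $\forall\tup v'\in\tup Y[\tup u]\lnot\theta_\Phi(\tup u,\tup v',\tup a)$, and since by IH $\phi_\Shb\leftrightarrow\lnot\theta_\Phi$ classically, we get $\forall\tup y'\in\tup Y[\tup x]\lnot\phi_\Shb \leftrightarrow \forall\tup v'\in\tup Y[\tup u]\theta_\Phi$; I then need this to equal $\lnot\theta_{\neg\Phi}=\lnot\forall\tup v'\in\tup Y[\tup u]\lnot\theta_\Phi$, i.e. I need $\forall\tup v'\,\theta_\Phi\leftrightarrow\lnot\forall\tup v'\,\lnot\theta_\Phi$ over a finite range — and that is \emph{not} intuitionistically or even classically true in general. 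This shows the real bookkeeping is subtler than a naive reading; the resolution (which I would carry out carefully) is that in the lemma $\tup x,\tup y$ range over the \emph{same} variables appearing in $(\Phi_\kr)^\D$ and the bounded-quantifier decorations on both sides are arranged by the respective interpretations so that, after substituting matching witness terms, the finite conjunctions/disjunctions collapse; concretely one checks the equivalence $\phi(\tup x,\tup y,\tup a)\leftrightarrow\lnot\theta(\tup x,\tup y,\tup a)$ at the level where $\phi$ and $\theta$ are \emph{before} the interpretation adds its own $\forall\cdot\in\cdot$ guards, and then notes those guards are added identically (dualized) on both sides. For $\Phi\vee\Psi$: (iv) gives matrix $\phi_\Shb\vee\psi_\Shb$, and $(\Phi\vee\Psi)_\kr\equiv\Phi_\kr\wedge\Psi_\kr$ gives via clause (iii) of the $\D$-interpretation a matrix $\theta_\Phi\wedge\theta_\Psi$, and $\epastar\vdash(\phi_\Shb\vee\psi_\Shb)\leftrightarrow\lnot(\theta_\Phi\wedge\theta_\Psi)$ follows from the two IH equivalences and a De Morgan law. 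For $\forall z\,\Phi$: (v) gives $\forallst\tup x\,\existsst\tup y\,\forall z\,\exists\tup y'\in\tup y\,\phi_\Shb(\tup x,\tup y',z)$, while $(\forall z\,\Phi)_\kr\equiv\exists z\,\Phi_\kr$ gives via clause (vii) of the $\D$-interpretation $\existsst\tup u\,\forallst\tup v\,\exists z\,\forall\tup v'\in\tup v\,\theta(\tup u,\tup v',z)$; matching $\tup x\leftrightarrow\tup u$, $\tup y\leftrightarrow\tup v$, $\tup y'\leftrightarrow\tup v'$, and using IH plus the fact that $\epastar$ proves $\exists\tup y'\in\tup y\,\lnot\theta(\tup u,\tup y',z)\leftrightarrow\lnot\forall\tup v'\in\tup v\,\theta(\tup u,\tup v',z)$ (this one \emph{is} a classical tautology over a finite explicitly-bounded range), we reduce to $\forall z\,\exists\tup y'\in\tup y\,\phi_\Shb \leftrightarrow \lnot\exists z\,\forall\tup v'\in\tup v\,\theta = \forall z\,\lnot\forall\tup v'\in\tup v\,\theta$, which is exactly the IH pushed through $\forall z$.

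The main obstacle, as the discussion of the $\neg\Phi$ case above already signals, is getting the bounded-quantifier bookkeeping exactly right: one must fix, once and for all, the convention that $\phi$ in ``$\Phi^\Sh\equiv\forallst\tup x\,\existsst\tup y\,\phi$'' is the full internal kernel and verify that the $\Sh$-clause and the $(\cdot)_\kr$-then-$\D$ route decorate that kernel with \emph{dual} finite-range guards, so that each De Morgan / finite-range collapse needed is a genuine classical tautology. Everything else — type-matching of the witness tuples, and the propositional steps — is routine once this convention is pinned down, and it should be dispatched by a short remark before the induction together with appeal to Theorem~\ref{soundnessDst} and the definitions of the two translations. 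This lemma is the technical heart of the reduction; Theorem~\ref{soundnessshoenfield} will then follow by combining it with Kuroda's soundness theorem (Theorem~\ref{soundnesskuroda}), Theorem~\ref{l:kukr}, and the soundness of the $\D$-interpretation (Theorem~\ref{soundnessDst}).
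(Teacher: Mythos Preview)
Your overall strategy --- induction on the logical structure of $\Phi$ using $\neg,\lor,\forall$ as primitives --- is exactly the paper's, and your treatment of the atomic cases, $\st(z)$, disjunction, and $\forall z$ is correct. The confusion you encountered in the $\neg\Phi$ case stems from a single concrete miscomputation, not from any genuine subtlety in the bookkeeping.

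You correctly write that the Diller--Nahm clause applied to $\lnot\Phi_\kr$ yields
\[
\existsst\tup Y\,\forallst\tup u\,\big(\forall\tup v\in\tup Y[\tup u]\,\theta(\tup u,\tup v,\tup a)\to\bot\big),
\]
but then you simplify the matrix to $\forall\tup v\in\tup Y[\tup u]\,\lnot\theta$. That step is wrong: $(\forall\tup v\in\tup Y[\tup u]\,\theta)\to\bot$ is $\lnot\forall\tup v\in\tup Y[\tup u]\,\theta$, not $\forall\tup v\in\tup Y[\tup u]\,\lnot\theta$. With the correct matrix
\[
\theta_{\neg\Phi}\equiv\lnot\forall\tup v'\in\tup Y[\tup x]\,\theta'(\tup x,\tup v',\tup a),
\]
the verification is immediate: in $\epastar$ we have $\lnot\theta_{\neg\Phi}\equiv\lnot\lnot\forall\tup v'\in\tup Y[\tup x]\,\theta'\leftrightarrow\forall\tup v'\in\tup Y[\tup x]\,\theta'$, and by the induction hypothesis $\lnot\phi'\leftrightarrow\theta'$, so this is exactly $\forall\tup y'\in\tup Y[\tup x]\,\lnot\phi'\equiv\phi_{\neg\Phi}$. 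No finite-range De~Morgan collapse is needed, and the paragraph about ``the resolution (which I would carry out carefully)'' can be deleted entirely.
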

\begin{proof} The proof is by induction on the structure of $\Phi$.
\begin{enumerate}
\item[(i)] If $\Phi \equiv \psi$, an internal and atomic formula, then $\varphi \equiv \psi$ and $\theta \equiv \lnot \psi$, so $\epastar \vdash \varphi \leftrightarrow \lnot \theta$.
\item[(ii)] If $\Phi \equiv \st(z)$, then $\varphi \equiv y = z$ and $\theta \equiv y \not= z$, so $\epastar \vdash \varphi \leftrightarrow \lnot \theta$.
\item[(iii)] If $\Phi \equiv \lnot \Phi'$ with $(\Phi')^\Sh \equiv \forallst \tup x\existsst  \tup y \, \phi'(\tup x, \tup y, \tup a)$ and $(\Phi'_\kr)^\D \equiv \existsst \tup u \forallst \tup v \, \theta'(\tup u,\tup v, \tup a)$, then $\varphi \equiv \forall \tup y' \in \tup Y[\tup x]\ \neg\phi'(\tup x,\tup y')$ and $\theta \equiv \neg\forall \tup i\in \tup Y[\tup x]\ \theta'(\tup x,\tup i)$. Since $\epastar \vdash \varphi' \leftrightarrow \lnot \theta'$ by induction hypothesis, also $\epastar \vdash \varphi \leftrightarrow \lnot \theta$.
\item[(iv)] If $\Phi \equiv \Phi_0 \lor \Phi_1$ with \[ \Phi_i^\Sh \equiv \forallst \tup x\existsst  \tup y \, \phi_i(\tup x, \tup y, \tup a) \] and \[ ((\Phi_i)_\kr)^\D \equiv \existsst \tup u \forallst \tup v \, \theta_i(\tup u,\tup v, \tup a),\] then $\varphi \equiv \varphi_0 \lor \varphi_1$ and $\theta \equiv \theta_0 \land \theta_1$. Since $\epastar \vdash \varphi_i \leftrightarrow \lnot \theta_i$ by induction hypothesis, also $\epastar \vdash \varphi \leftrightarrow \lnot \theta$.
\item[(v)] If $\Phi \equiv \forall z \, \Phi'$ with \[ (\Phi')^\Sh \equiv \forallst \tup x\existsst  \tup y \, \phi'(\tup x, \tup y, z, \tup a) \] and \[ (\Phi'_\kr)^\D \equiv \existsst \tup u \forallst \tup v \, \theta'(\tup u,\tup v, z, \tup a),\] then $\varphi \equiv \forall z \exists \tup y' \in \tup y \varphi'(\tup x, \tup y', z, \tup a)$ and $\theta \equiv \exists z\forall  \tup y' \in \tup y\ \theta'(\tup x,\tup y',z, \tup a)$. Since $\epastar \vdash \varphi' \leftrightarrow \lnot \theta'$ by induction hypothesis, also $\epastar \vdash \varphi \leftrightarrow \lnot \theta$.
\end{enumerate}
\end{proof}

\begin{remark}
This lemma is the reason why we introduced the system $\ehanststar$ in Remark \ref{hanst}: it would fail if we would let the Krivine negative translation land directly in $\ehaststar$ with $\st(z)_\kr = \lnot \st(z)$. As it is, this lemma yields a quick proof of the soundness of the $\Sh$-interpretation.
\end{remark}

\begin{proof} (Of the soundness of the $\Sh$-interpretation, Theorem \ref{soundnessshoenfield}.) Let $\Phi(\tup a)$ be a formula in the language of $\epaststar$ and let  $\varphi$ and $\theta$ be such that
\begin{eqnarray*}
\Phi^\Sh & \equiv & \forallst \tup x\existsst  \tup y \, \phi(\tup x, \tup y, \tup a), \\
(\Phi_\kr)^\D & \equiv & \existsst \tup x \forallst \tup y \, \theta(\tup x,\tup y, \tup a)
\end{eqnarray*}
and $\epastar \vdash \varphi \leftrightarrow \lnot \theta$, as in Lemma \ref{p:ShD}.

Now, suppose that $\Delta_\intern$ is a set of internal formulas and $\Phi(\tup a)$ is a formula provable in $\epaststar$ from  $\Delta_\intern$. We first apply soundness of the Krivine negative translation (Theorem~\ref{l:kukr}) to see that
\[ \ehanststar + \Delta_\intern^\kr \vdash \Phi^\kr, \]
where $\Phi^\kr \equiv \lnot \Phi_\kr$. So if $(\Phi_\kr)^\D \equiv \existsst \tup x \forallst \tup y \, \theta(\tup x,\tup y, \tup a)$, then
\[ (\Phi^\kr)^\D \equiv \existsst \tup Y \forallst \tup x \exists \tup y \in \tup Y[\tup x] \lnot \theta(\tup x, \tup y, \tup a). \]
It follows from the soundness theorem for $\D$ (Theorem \ref{soundnessDst}) and Remark \ref{hanst} that there is a sequence of closed terms $\tup s$ from $\Tstar$ such that
\[ \ehastar + \Delta_\intern^\kr \vdash \forall \tup x \exists \tup y  \in \tup s[\tup x] \lnot \theta(\tup x, \tup y, \tup a). \]
Since $\epastar \vdash \Delta_\intern^\kr \leftrightarrow \Delta_\intern$ and $\epastar \vdash \varphi \leftrightarrow \lnot \theta$ we have
\[
\epastar + \Delta_{\intern} \vdash\ \forall \tup x\exists \tup y \in \tup t(\tup x)\ \phi(\tup x,\tup y, \tup a),
\]
with $\tup t \equiv \lambda \tup x. \tup s[\tup x]$.
\end{proof}

\subsection{Characteristic principles}

The characteristic principles of our functional interpretation for classical arithmetic are idealization $\I$ (or, equivalently, $\R$: see Section 4.1) and $\HAC_\intern$
\[     \forallst x \existsst y \, \varphi(x,y) \to \existsst F \forallst x \exists y \in F(x)\,  \varphi (x,y),         \]
which is the choice scheme $\HAC$ restricted to internal formulas. To see this, note first of all that we have:

\begin{prop}
For any formula $\Phi$ in the language of $\epaststar$ one has:
\[ \epaststar + \I + \HAC_\intern \vdash \Phi \leftrightarrow \Phi^\Sh. \]
\end{prop}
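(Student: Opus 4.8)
The plan is to prove the equivalence $\Phi \leftrightarrow \Phi^\Sh$ by induction on the logical structure of $\Phi$, taking $\lnot$, $\lor$ and $\forall$ as the primitive connectives (so that $\forallst$, $\existsst$, $\land$, $\to$ are treated as abbreviations) and arguing throughout inside $\epaststar + \I + \HAC_\intern$, hence with full classical logic available. I will use freely the basic facts about the standardness predicate from Section~2 --- the extensionality of $\st$, preservation of standardness under the term operations and under finite-sequence application $\tup Y[\tup x]$, and Lemma~\ref{elemstsetset} (an element of a standard finite sequence is standard) --- all of which go through over $\epaststar$; I will also use that, classically, $\I$ is equivalent to the realization principle $\R$ (cf.\ Section~3), and that $\tup Y[\tup x]$ is interdefinable with ordinary application (Section~2.6). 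The base cases are immediate: an internal atomic $\phi$ equals its own interpretation, and $\big(\st(z)\big)^\Sh \equiv \existsst x\,(z = x)$ unfolds to $\exists x\,(\st(x)\land z = x)$, which is equivalent to $\st(z)$ by extensionality of $\st$.

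For the disjunction clause, with $\Psi^\Sh \equiv \forallst \tup x\,\existsst \tup y\,\psi_\Shb$ and $\Xi^\Sh \equiv \forallst \tup u\,\existsst \tup v\,\xi_\Shb$ by the induction hypothesis, one has $(\Psi\lor\Xi)^\Sh \equiv \forallst \tup x,\tup u\,\existsst \tup y,\tup v\,(\psi_\Shb(\tup x,\tup y)\lor\xi_\Shb(\tup u,\tup v))$. The implication from $\Psi\lor\Xi$ to this formula is trivial (pad the unused witness with the closed, hence standard, terms $\tup{\mathcal O}$), and for the converse one splits classically on whether $\forallst \tup x\existsst \tup y\,\psi_\Shb$ holds; if it fails, classical logic gives a standard $\tup x_0$ with $\forallst \tup y\,\lnot\psi_\Shb(\tup x_0,\tup y)$, and substituting this $\tup x_0$ into the hypothesis forces $\forallst \tup u\existsst \tup v\,\xi_\Shb$. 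So this clause needs nothing beyond classical logic. For the universal clause, with $\Psi^\Sh \equiv \forallst \tup x\existsst \tup y\,\psi_\Shb(\tup x,\tup y,z)$, one has $(\forall z\,\Psi)^\Sh \equiv \forallst \tup x\existsst \tup y\,\forall z\,\exists\tup y'\in\tup y\,\psi_\Shb(\tup x,\tup y',z)$; the direction from $\forall z\,\forallst \tup x\existsst \tup y\,\psi_\Shb$ to this is, for each fixed standard $\tup x$, exactly an instance of $\R$ applied to $\forall z\,\existsst \tup y\,\psi_\Shb(\tup x,\tup y,z)$, while the reverse direction only uses that the elements of the standard sequence witnessing $\existsst \tup y$ are standard.

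The remaining, and main, case is negation. Here $(\lnot\Psi)^\Sh \equiv \forallst \tup Y\,\existsst \tup x\,\forall\tup y\in\tup Y[\tup x]\,\lnot\psi_\Shb(\tup x,\tup y)$, while by the induction hypothesis and classical logic $\lnot\Psi$ is equivalent to $\existsst \tup x\,\forallst \tup y\,\lnot\psi_\Shb(\tup x,\tup y)$. The implication $\lnot\Psi\to(\lnot\Psi)^\Sh$ is the easy half: a standard witness $\tup x_0$ for $\existsst \tup x\forallst \tup y\,\lnot\psi_\Shb$ works for every standard $\tup Y$, since $\tup Y[\tup x_0]$ is then standard and hence all its elements are standard. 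For the converse I would argue by contraposition: the classical negation of $(\lnot\Psi)^\Sh$ is $\existsst \tup Y\,\forallst \tup x\,\exists\tup y\in\tup Y[\tup x]\,\psi_\Shb(\tup x,\tup y)$ and the classical negation of $\lnot\Psi$ is $\forallst \tup x\existsst \tup y\,\psi_\Shb(\tup x,\tup y)$, so the required implication is precisely the contrapositive of $\forallst \tup x\existsst \tup y\,\psi_\Shb(\tup x,\tup y)\to\existsst \tup Y\,\forallst \tup x\,\exists\tup y\in\tup Y[\tup x]\,\psi_\Shb(\tup x,\tup y)$, which --- since $\psi_\Shb$ is internal and $\tup Y[\tup x]$ may be replaced by $F(\tup x)$ --- is an instance of $\HAC_\intern$. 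I expect this reduction of the negation clause to $\HAC_\intern$ to be the crux: one must classically prenex the iterated negation correctly, keep the matrix internal (so that $\HAC_\intern$, rather than the unrestricted $\HAC$, suffices), and translate between ordinary and finite-sequence application; the disjunction and universal clauses are routine once $\R$ is in hand, modulo the usual tupling bookkeeping (embedding tuples of types into a single type and coding tuples of functionals, as in \cite{Troelstra73}).
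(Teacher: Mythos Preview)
Your proposal is correct and follows precisely the approach the paper sketches: an induction on the structure of $\Phi$ in which the negation clause is handled via $\HAC_\intern$ and the internal universal quantifier via $\R$ (the classical equivalent of $\I$), with the remaining cases requiring only classical logic. The paper's own proof is a one-line sketch (``An easy proof by induction on the structure of $\Phi$, using $\HAC_\intern$ for the case of negation and $\I$ (or rather $\R$) in the case of internal universal quantification''), and you have correctly unpacked exactly these two key steps.
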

\begin{proof}
An easy proof by induction on the structure of $\Phi$, using $\HAC_\intern$ for the case of negation and $\I$ (or rather $\R$) in the case of internal universal quantification.
\end{proof}

For the purpose of showing that $\I$ and $\HAC_\intern$ are interpreted, it will be convenient to consider the ``hybrid'' system $\ehanststar + \LEM_\intern$, where $\LEM_\intern$ is the law of excluded middle for internal formulas. For this hybrid system we have the following easy lemma, whose proof we omit:

\begin{lemma} We have:
\begin{enumerate}
\item[1.] $\ehanststar + \LEM_\intern \, \vdash \, \varphi^\ku \leftrightarrow \varphi$, if $\varphi$ is an internal formula in the the language of $\epaststar$.
\item[2.] $\ehanststar + \LEM_\intern + \I \, \vdash \, \I^\ku$.
\item[3.] $\ehanststar + \LEM_\intern + \HAC_\intern + \HGMP \, \vdash \, \HAC^\ku_\intern$.
\end{enumerate}
\end{lemma}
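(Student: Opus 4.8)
The plan is to prove the three items in turn, with item~1 serving as the engine for items~2 and~3. Write $\Phi^\ku\equiv\neg\neg\Phi_\ku$ as in the definition, and note that $\LEM_\intern$ makes every internal formula stable, i.e. $\ehanststar+\LEM_\intern\vdash\neg\neg\varphi\to\varphi$ for internal $\varphi$. For item~1 I would show by induction on the internal formula $\varphi$ that $\ehanststar+\LEM_\intern\vdash\varphi_\ku\leftrightarrow\varphi$: the atomic, $\neg$ and $\vee$ cases follow immediately from the induction hypothesis, and for $\forall x\,\psi$ one has $(\forall x\,\psi)_\ku\equiv\forall x\,\neg\neg\psi_\ku\leftrightarrow\forall x\,\neg\neg\psi\leftrightarrow\forall x\,\psi$, the last step by stability of the internal formula $\psi$; the connectives $\wedge,\to,\exists$ are $\epaststar$-abbreviations in terms of $\neg,\vee,\forall$ and need no separate treatment. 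Since $\varphi$ is internal, hence stable, $\varphi^\ku\equiv\neg\neg\varphi_\ku\leftrightarrow\neg\neg\varphi\leftrightarrow\varphi$.

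For items~2 and~3 I would first record two intuitionistically valid propositional facts: $\neg\neg(\neg C\vee D)\leftrightarrow(\neg\neg C\to\neg\neg D)$, so that $\neg\neg(\neg C\vee D)\leftrightarrow(\neg\neg C\to D)$ whenever $D$ is stable; and that for stable $D$, $\ehanststar+\LEM_\intern\vdash(\neg\neg\st(x)\to D)\leftrightarrow(\st(x)\to D)$ (forward by stability of $D$, backward since $\st(x)\to\neg\neg\st(x)$). Combining these with item~1 to collapse every internal subformula to itself, a routine expansion of the $\epaststar$-abbreviations $\forallst,\existsst,\wedge,\to,\exists$ into the primitives $\neg,\vee,\forall$ followed by bookkeeping of the double negations Kuroda's translation inserts shows that, over $\ehanststar+\LEM_\intern$,
\[
\I^\ku\ \leftrightarrow\ \big(\,\forallst x^{\sigma^*}\,\psi(x)\ \to\ \neg\forall y^\tau\,\neg\,\forallst x^\sigma\,\varphi(x,y)\,\big),
\]
where $\psi(x)\equiv\exists y^\tau\,\forall x'\in x\,\varphi(x',y)$ is the (internal) premise matrix of $\I$; in other words $\I^\ku$ is $\I$ with the existential quantifier of its conclusion replaced by the weak existential $\neg\forall y^\tau\,\neg(\cdot)$. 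Since $\exists y\,A\to\neg\forall y\,\neg A$ holds intuitionistically, $\I^\ku$ follows from $\I$, which is item~2.

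The same computation yields, over $\ehanststar+\LEM_\intern$,
\[
\HAC^\ku_\intern\ \leftrightarrow\ \big(\,\forallst x\,\neg\forallst y\,\neg\varphi(x,y)\ \to\ \neg\forallst F\,\neg\,\forallst x\,\exists y\in F(x)\,\varphi(x,y)\,\big),
\]
i.e. $\HAC_\intern$ with the outer $\existsst F$ weakened to $\neg\forallst F\,\neg(\cdot)$ and, crucially, the $\existsst y$ of its \emph{premise} weakened to $\neg\forallst y\,\neg(\cdot)$. The consequent here is a weakening of $\existsst F\,\forallst x\,\exists y\in F(x)\,\varphi(x,y)$, so it suffices to produce the latter, and for that it suffices, by $\HAC_\intern$, to strengthen the weak antecedent $\forallst x\,\neg\forallst y\,\neg\varphi(x,y)$ to the genuine $\forallst x\,\existsst y\,\varphi(x,y)$. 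This is exactly where $\HGMP$ enters: for a standard $x$, the weak antecedent gives $\forallst y\,\neg\varphi(x,y)\to\bot$, so $\HGMP$ with $\phi(y):\equiv\neg\varphi(x,y)$ and $\psi:\equiv\bot$ yields a \emph{standard} finite sequence $y^*$ with $\forall y'\in y^*\,\neg\varphi(x,y')\to\bot$; as $\varphi$ is internal, hence decidable under $\LEM_\intern$, and $y'\in y^*$ is a bounded internal quantifier, this gives $\exists y'\in y^*\,\varphi(x,y')$, and by Lemma~\ref{elemstsetset} every such $y'$ is standard, so $\existsst y\,\varphi(x,y)$. Applying $\HAC_\intern$ and then weakening the resulting $\existsst F$ gives $\HAC^\ku_\intern$, which is item~3.

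The main obstacle is the bookkeeping behind the two displayed equivalences: one must write $\I$ and $\HAC_\intern$ out in the primitives $\neg,\vee,\forall$, keep track of every $\neg\neg$ that $^\ku$ inserts after a universal quantifier (and of the leading one), and decide which can be removed — using stability of internal formulas (via $\LEM_\intern$), of negations, and of universal quantifications over stable matrices — and which cannot. In effect $^\ku$ turns every existential quantifier (internal existentials excepted, by item~1) into a weak double-negated one; in the \emph{conclusion} of a principle this is harmless, since the strong conclusion entails the weak one, but in the \emph{premise} it is not. Now $\I$'s premise $\forallst x\,\psi(x)$ has $\psi$ internal, so it survives $^\ku$ unchanged up to provable equivalence and $\I^\ku$ follows from $\I$ alone; but $\HAC_\intern$'s premise $\forallst x\,\existsst y\,\varphi(x,y)$ contains the standard existential $\existsst y$, which $^\ku$ genuinely weakens, and recovering the full $\existsst y$ from its double negation is precisely a Markov-type inference — which is what $\HGMP$ supplies. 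That is why $\HGMP$, and no other nonstandard principle, is needed for item~3 but not for item~2.
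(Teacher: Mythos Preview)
Your argument is correct. The paper omits its own proof of this lemma (calling it ``easy''), so there is no approach to compare against; your write-up fills in exactly the routine verification the authors had in mind, and your identification of where $\HGMP$ is needed in item~3 --- namely to pass from the Kuroda-weakened premise $\forallst x\,\neg\forallst y\,\neg\varphi(x,y)$ back to $\forallst x\,\existsst y\,\varphi(x,y)$ --- is the crux of the matter.

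One trivial slip: in your parenthetical justification of $(\neg\neg\st(x)\to D)\leftrightarrow(\st(x)\to D)$ for stable $D$, the labels ``forward'' and ``backward'' are swapped. The direction $(\neg\neg\st(x)\to D)\to(\st(x)\to D)$ needs only $\st(x)\to\neg\neg\st(x)$; it is the converse that requires stability of $D$. This does not affect the argument.
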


This means we can strengthen Theorem  \ref{soundnessshoenfield} to:

\begin{thm} {\rm (Soundness of the $\Sh$-interpretation, full version.)}\label{fullsoundnessSst} Let $\Phi(\tup a)$ be a formula in the language of $\epaststar$ and suppose $\Phi(\tup a)^\Sh\equiv\forallst \tup x \, \existsst \tup y \, \phi(\tup x, \tup y, \tup a)$. If $\Delta_{\intern}$ is a collection of internal formulas and
\[ \epaststar + \I + \HAC_\intern + \Delta_{\intern} \vdash \Phi(\tup a), \]
then one can extract from the formal proof a sequence of closed terms $\tup t$ in $\Tstar$ such that
\[
\epastar + \Delta_{\intern} \vdash\ \forall \tup x\exists \tup y\in \tup t(\tup x)\ \phi(\tup x,\tup y, \tup a).
\]
\end{thm}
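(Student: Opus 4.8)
The plan is to reduce the full version of the soundness theorem for the $\Sh$-interpretation to the already-established ``plain'' version (Theorem~\ref{soundnessshoenfield}) by absorbing the extra axioms $\I$ and $\HAC_\intern$ into an application of negative translation followed by the soundness of the $\D$-interpretation. The key observation is that $\I$ and $\HAC_\intern$ are \emph{not} internal, so they cannot simply be thrown into $\Delta_\intern$; instead we must check that their (Kuroda) negative translations are provable in the system over which the $\D$-interpretation is sound, namely $\ehanststar$ (equivalently $\ehaststar$, by Remark~\ref{hanst}), possibly with the help of $\HGMP$ and $\LEM_\intern$. This is exactly the content of the auxiliary Lemma stated just before the theorem: $\ehanststar + \LEM_\intern + \I \vdash \I^\ku$ and $\ehanststar + \LEM_\intern + \HAC_\intern + \HGMP \vdash \HAC^\ku_\intern$, together with $\varphi^\ku \leftrightarrow \varphi$ for internal $\varphi$.

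Concretely, I would proceed as follows. Suppose $\epaststar + \I + \HAC_\intern + \Delta_\intern \vdash \Phi(\tup a)$. By the soundness of the Kuroda negative translation for $\epaststar$ (Theorem~\ref{soundnesskuroda}), applied with the augmented axiom set $\Delta := \I + \HAC_\intern + \Delta_\intern$, we get
\[
\ehaststar + \I^\ku + \HAC_\intern^\ku + \Delta_\intern^\ku \vdash \Phi^\ku.
\]
Now invoke the auxiliary Lemma: since $\ehaststar$ proves $\LEM_\intern$ is harmless here (we are in $\ehanststar \equiv \ehaststar$-with-$\nst$, which proves $\LEM_\intern$ trivially only if... — in fact one works in $\ehanststar + \LEM_\intern$), we have $\I^\ku$ and $\HAC_\intern^\ku$ derivable from $\I$, $\HAC_\intern$, $\HGMP$ and $\LEM_\intern$, and $\Delta_\intern^\ku \leftrightarrow \Delta_\intern$. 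Hence
\[
\ehaststar + \I + \NCR + \HAC + \HGMP + \HIP_{\forallst} + \Delta_\intern \vdash \Phi^\ku,
\]
where we have freely added $\NCR$, $\HAC$ (which contains $\HAC_\intern$), $\HGMP$ and $\HIP_{\forallst}$ since these are exactly the axioms the $\D$-interpretation eliminates. At this point apply the soundness theorem for the $\D$-interpretation (Theorem~\ref{soundnessDst}) to $\Phi^\ku$, extracting closed terms $\tup s$ in $\Tstar$ realizing $(\Phi^\ku)^\D$ over $\ehastar + \Delta_\intern$. Finally, use Lemma~\ref{p:ShD} (relating $\Phi^\Sh$ and $(\Phi_\kr)^\D$, and the fact that $\Phi^\ku$ and $\Phi^\kr$ are interprovable by Theorem~\ref{l:kukr}) to convert the $\D$-realizer of $(\Phi^\ku)^\D$ into the desired terms $\tup t$ witnessing $\forall \tup x \exists \tup y \in \tup t(\tup x)\, \phi(\tup x, \tup y, \tup a)$ over $\epastar + \Delta_\intern$, exactly as in the proof of Theorem~\ref{soundnessshoenfield}.

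In fact, the cleanest route is to simply re-run the proof of Theorem~\ref{soundnessshoenfield} verbatim, except that at the step where one applies the soundness of the $\D$-interpretation one now works with the extra hypotheses $\I$ and $\HAC_\intern$ already present; their Krivine/Kuroda negative translations are absorbed using the auxiliary Lemma, and the $\D$-interpretation then eliminates $\I$, $\HAC$ and $\HGMP$ as before. Since all the extra principles $\I$, $\NCR$, $\HAC$, $\HGMP$, $\HIP_{\forallst}$ are among those interpreted by $\D$ (Theorem~\ref{soundnessDst}), nothing new needs to be realized and the term extraction goes through unchanged.

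The main obstacle — and the only genuinely new ingredient over Theorem~\ref{soundnessshoenfield} — is verifying the auxiliary Lemma, i.e.\ that $\I^\ku$ and $\HAC^\ku_\intern$ are provable in $\ehanststar + \LEM_\intern$ (with $\HGMP$ for the choice principle). This is where one must be careful: the negative translation pushes double negations into the internal matrix of $\I$, and one needs $\LEM_\intern$ to strip them off internal formulas, while for $\HAC_\intern$ one additionally needs $\HGMP$ to commute an existential-standard quantifier past the (now doubly negated) internal premise. Since the paper states this Lemma with its proof omitted, I would treat it as given; the remainder of the argument is then a routine diagram-chase through Theorems~\ref{soundnesskuroda}, \ref{l:kukr}, \ref{soundnessDst} and Lemma~\ref{p:ShD}.
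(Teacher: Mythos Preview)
Your approach is essentially the same as the paper's: apply Kuroda negative translation, use the auxiliary Lemma to replace $\I^\ku$ and $\HAC_\intern^\ku$ by $\I$, $\HAC_\intern$, $\HGMP$ over $\ehanststar + \LEM_\intern$, then invoke soundness of the $\D$-interpretation and Lemma~\ref{p:ShD} exactly as in Theorem~\ref{soundnessshoenfield}.

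There is one slip worth fixing. You write, after some visible hesitation, that ``$\ehaststar$ proves $\LEM_\intern$ is harmless'' and then display
\[
\ehaststar + \I + \NCR + \HAC + \HGMP + \HIP_{\forallst} + \Delta_\intern \vdash \Phi^\ku,
\]
with $\LEM_\intern$ dropped. But $\ehaststar$ (and $\ehanststar$) is intuitionistic and does \emph{not} prove $\LEM_\intern$; the auxiliary Lemma genuinely needs $\LEM_\intern$ as an extra hypothesis, and so does the equivalence $\Delta_\intern^\ku \leftrightarrow \Delta_\intern$. The correct line is
\[
\ehanststar + \LEM_\intern + \I + \HAC_\intern + \HGMP + \Delta_\intern^\ku \vdash \Phi^\ku,
\]
and the point you are missing is that $\LEM_\intern$ is an \emph{internal} schema, so it may be added to $\Delta_\intern$ and passed through the $\D$-soundness theorem unchanged; on the $\ehastar$ side it becomes $\LEM$, which is absorbed once you move to $\epastar$. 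This is exactly how the paper handles it. With that correction your argument is complete and matches the paper's proof.
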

\begin{proof} The argument is a slight extension of the proof of Theorem \ref{soundnessshoenfield}. So, once again, let $\Phi(\tup a)$ be a formula in the language of $\epaststar$ and $\varphi$ and $\theta$ be such that
\begin{eqnarray*}
\Phi^\Sh & \equiv & \forallst \tup x\existsst  \tup y \, \phi(\tup x, \tup y, \tup a), \\
(\Phi_\kr)^\D & \equiv & \existsst \tup x \forallst \tup y \, \theta(\tup x,\tup y, \tup a)
\end{eqnarray*}
and $\epastar \vdash \varphi \leftrightarrow \lnot\theta$, as in Lemma \ref{p:ShD}.

This time we suppose $\Delta_\intern$ is a set of internal formulas and $\Phi(\tup a)$ is a formula provable in $\epaststar$ from $\I + \HAC_\intern + \Delta_\intern$. We first apply soundness of the Kuroda negative translation (Theorem~\ref{soundnesskuroda}), which yields:
\[ \ehanststar + \I^\ku + \HAC^\ku_\intern + \Delta_\intern^\ku \vdash \Phi^\ku. \]
Then the previous lemma implies that:
\[ \ehanststar + \LEM_\intern + \I + \HAC_\intern + \HGMP + \Delta_\intern^\ku \vdash \Phi^\ku. \]
Note that $\ehanststar \vdash \Phi^\ku \leftrightarrow \Phi^\kr$,  $\Phi^\kr \equiv \lnot \Phi_\kr$ and
\[ (\Phi^\kr)^\D \equiv \existsst \tup Y \forallst \tup x \exists \tup y \in \tup Y[\tup x] \lnot \theta(\tup x, \tup y , \tup a). \]
Therefore the soundness theorem for $\D$ (Theorem \ref{soundnessDst}), in combination with Remark \ref{hanst} and the fact that the axiom scheme $\LEM_\intern$ is internal, implies that there is a sequence of closed terms $\tup s$ from $\Tstar$ such that
\[ \ehastar + \LEM + \Delta_\intern^\ku \vdash \forall \tup x \exists \tup y \in \tup s[\tup x] \lnot \theta(\tup x, \tup y, \tup a). \]
Since $\epastar \vdash \LEM$, $\epastar \vdash \Delta_\intern^\ku \leftrightarrow \Delta_\intern$ and $\epastar \vdash \varphi \leftrightarrow \lnot \theta$, we have
\[
\epastar + \Delta_{\intern} \vdash\ \forall \tup x\exists \tup y \in \tup t(\tup x)\ \phi(\tup x,\tup y, \tup a)
\]
with $\tup t \equiv \lambda \tup x. \tup s[\tup x]$.
\end{proof}

The following picture depicts the relation between the various interpretations we have established:
\begin{figure}[hbt]%
\[
\begin{xy}
  \xymatrix{
      \epaststar + \I + \HAC_\intern \ar[rd]^{(\cdot)\ku} \ar[dd]_{(\cdot)^\Sh} &  \\
      {} & \ehanststar+\LEM_\intern+\I+\NCR + \HAC + \HGMP + \HIP_{\forallst} \ar[ld]^{(\cdot)^\D}  \\
                                   \epastar &   {}
  }
\end{xy}\]
\caption{The Shoenfield and negative Dialectica interpretations.}
\label{f:ShD}
\end{figure}
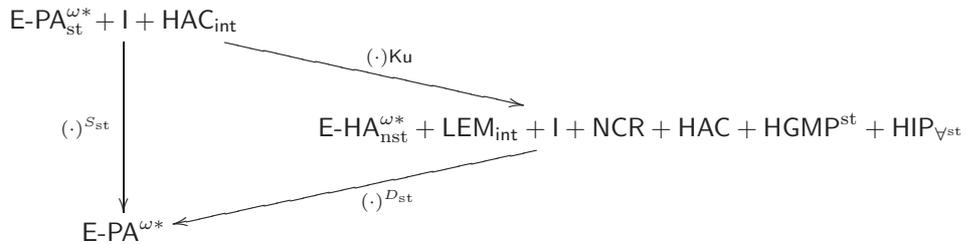

\subsection{Conservation results and the transfer principle}

Theorem \ref{fullsoundnessSst} immediately gives us the following conservation result:

\begin{cor}
$\epaststar + \I + \HAC_\intern$ is a conservative extension of $\epastar$ and hence of $\epa$.
\end{cor}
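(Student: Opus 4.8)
The plan is to read this off directly from the full soundness theorem for the $\Sh$\nobreakdash-interpretation (Theorem~\ref{fullsoundnessSst}), exactly as the corresponding corollary in the intuitionistic case was read off from the soundness of the $\D$\nobreakdash-interpretation. The one thing to check is that \emph{internal} formulas are interpreted by themselves: if $\Phi$ is a formula in the language of $\epastar$ (hence contains neither the predicate $\st$ nor the external quantifiers), then a trivial induction on the structure of $\Phi$ shows that in $\Phi^\Sh \equiv \forallst \tup x \, \existsst \tup y \, \phi_\Shb(\tup x, \tup y, \tup a)$ both tuples $\tup x$ and $\tup y$ are empty and $\phi_\Shb$ is (literally) $\Phi$. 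Indeed, clause (ii) of Definition~\ref{d:stS} never applies, and once the tuples occurring in the interpretations of the immediate subformulas are empty, clauses (iii), (iv) and (v) collapse to $\neg\Phi$, $\Phi_0\vee\Phi_1$ and $\forall z\,\Phi'$ respectively.

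Given this, suppose $\Phi$ is a sentence in the language of $\epastar$ with $\epaststar + \I + \HAC_\intern \vdash \Phi$. Applying Theorem~\ref{fullsoundnessSst} with $\Delta_\intern = \emptyset$, and using $\Phi^\Sh \equiv \Phi$ with empty tuples $\tup x,\tup y$, we extract an empty tuple of closed terms such that $\epastar \vdash \Phi$. Hence $\epaststar + \I + \HAC_\intern$ is a conservative extension of $\epastar$.

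Finally, just as $\ehastar$ is a definitional extension of $\eha$ (see Section~2.1), $\epastar$ is a definitional extension of, and hence conservative over, $\epa$: the sequence types together with $\et_\sigma$, $c$, the list recursors and the derived operations on finite sequences are all available in $\epa$ via the standard coding of finite sequences. Composing the two conservativity statements gives that $\epaststar + \I + \HAC_\intern$ is conservative over $\epa$ as well. I do not expect any genuine obstacle: the only non-bookkeeping point is the self-interpretation of internal formulas, and that is immediate from the definition of the $\Sh$\nobreakdash-interpretation.
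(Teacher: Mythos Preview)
Your proof is correct and follows exactly the approach the paper intends: the paper simply states that the corollary is immediate from Theorem~\ref{fullsoundnessSst}, and you have spelled out the one point that makes this immediate, namely that internal formulas satisfy $\Phi^\Sh \equiv \Phi$ with empty tuples $\tup x, \tup y$. This is the direct analogue of Lemma~\ref{le:forallst-formulas} used in the intuitionistic conservation corollary, and your inductive verification of it is fine.
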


We conjecture that this result is not the best possible and that $\epaststar + \I + \HAC_\intern + \TPA$ is also conservative over $\epastar$. This would follow from:

\begin{con} \label{consconj}
Let $\Phi(\tup a)$ be a formula in the language of $\epaststar$ and suppose $\Phi(\tup a)^\Sh\equiv\forallst \tup x \, \existsst \tup y \, \phi(\tup x, \tup y, \tup a)$. If $\Delta_{\intern}$ is a collection of internal formulas and
\[ \epaststar + \I + \HAC_\intern + \TPA + \Delta_{\intern} \vdash \Phi(\tup a), \]
then
\[
\epastar + \Delta_{\intern} \vdash\ \forall \tup x\exists \tup y \phi(\tup x, \tup y, \tup a).
\]
\end{con}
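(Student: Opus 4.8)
The plan is first to reduce Conjecture~\ref{consconj} to a plain conservativity statement and then to attack that. By the proposition stating $\epaststar + \I + \HAC_\intern \vdash \Phi \leftrightarrow \Phi^\Sh$, the hypothesis yields $\epaststar + \I + \HAC_\intern + \TPA + \Delta_\intern \vdash \forallst \tup x \, \existsst \tup y \, \phi(\tup x, \tup y, \tup a)$; since $\existsst \tup y \, \phi \to \exists \tup y \, \phi$ is trivial and $\TPA$ (applied, if necessary, also with the parameters $\tup a$ in the r\^{o}le of the universally quantified standard variables, and then once more to remove the leading $\forallst$) turns $\forallst \tup x \, \exists \tup y \, \phi$ into the internal formula $\forall \tup x \, \exists \tup y \, \phi$, the system in fact proves the internal sentence $\forall \tup a \, \forall \tup x \, \exists \tup y \, \phi(\tup x, \tup y, \tup a)$. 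Hence Conjecture~\ref{consconj} is equivalent to the assertion that $\epaststar + \I + \HAC_\intern + \TPA$ is conservative over $\epastar$ for internal sentences.

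For a proof-theoretic attack one would try to eliminate the transfer \emph{principle} in favour of the transfer \emph{rule} $\TRA$, under which $\epaststar + \I + \HAC_\intern$ is closed (by Theorem~\ref{fullsoundnessSst}, exactly as for the intuitionistic system in Section~5.3), so that adding $\TRA$ changes no internal theorems, after which Theorem~\ref{fullsoundnessSst} applies directly. A proof using $\TPA$ uses only finitely many instances $T_1, \dots, T_n$ of it, so $\epaststar + \I + \HAC_\intern + \Delta_\intern \vdash (T_1 \wedge \cdots \wedge T_n) \to \Phi$; feeding this implication to Theorem~\ref{fullsoundnessSst} produces closed terms realizing the $\Sh$-interpretation of $(T_1 \wedge \cdots \wedge T_n) \to \Phi$, whose matrix has the shape $\big(\bigvee_i (\text{refutation of the matrix of } T_i^\Sh)\big) \vee (\text{matrix of } \Phi^\Sh)$, so that it suffices to falsify each $T_i$-disjunct. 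One checks by direct computation that for a transfer instance $T$ built from an internal $\varphi(x,\tup t)$ the matrix of $T^\Sh$, once its quantifiers are put in prenex form and the witnesses are collected into finite sequences, is a prenexed instance of excluded middle of roughly the form $\exists \tup z \, \lnot\varphi(\tup z, \tup t) \vee \forall x \, \varphi(x, \tup t)$, which is provable in $\epastar$ (using bounded choice $\forall i < n \, \exists y \, \chi(i,y) \to \exists f \, \forall i < n \, \chi(i, f(i))$, which $\epastar$ proves by internal induction on the length bound).

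The main obstacle is what then remains: to kill the $T_i$-disjuncts one does not merely need the \emph{provability} of the matrices of $T_i^\Sh$ but a choice \emph{functional} feeding the relevant finite witness-sequences into the extracted terms, and the universal parameters to which this functional must respond are themselves produced by the extracted terms from the functional --- so one faces a fixpoint problem whose resolution amounts to an instance of the herbrandized choice scheme $\HAC$ over plain $\epastar$, which is not available there. This is precisely the obstruction, discussed in Section~3, that forced transfer to be treated as a rule and not a principle: a $\Sh$-realizer for $\TPA$ cannot exist, as one sees by taking $\varphi(x)$ to be a $\PiL$ formula expressing ``$x$ does not code a $\PA$-proof of $\bot$'', since by G\"odel's second incompleteness theorem no closed term of $\Tstar$ can witness the existential in the matrix of the corresponding $T^\Sh$. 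Overcoming this seems to demand either a genuinely new proof transformation for $\epaststar$ --- a normalization procedure permuting occurrences of $\TPA$ upward to positions where only $\TRA$ is used --- or a switch to a model-theoretic argument: given $M \models \epastar + \Delta_\intern$, one builds a sufficiently saturated proper elementary extension $N \succeq M$ and takes $\st^N$ to be (the image of) $M$, so that $\TPA$ holds by elementarity, $\I$ by saturation, and the $\Tst$-axioms and external induction by construction. In that approach the delicate point merely changes identity: it becomes $\HAC_\intern$, whose witness must be \emph{standard}, i.e.\ must lie in $M$, so one would first need a classical conservativity result reducing $\epastar + \HAC$ (or enough of it) to $\epastar$, or to run the extension inside a term model in which $\HAC$ holds. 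Reconciling $\HAC_\intern$ with $\TPA$ --- the former handled effortlessly by the functional interpretation $\D$ but untouched by it, the latter handled effortlessly by model constructions but awkward for $\HAC_\intern$ --- is where I expect the real difficulty to lie.
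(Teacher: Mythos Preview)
The statement you were asked to prove is labelled \texttt{con} in the paper --- it is a \emph{conjecture}, not a theorem, and the paper contains no proof of it. Immediately after stating it the authors explain why the obvious attack (strengthening the soundness theorem for $\Sh$) fails, giving essentially the same G\"odel-style argument you sketch: a $\Sh$-realizer for $\TPA$ applied to a $\Pi^0_1$ consistency formula would yield closed terms witnessing the totality of that formula, which is impossible. They then state that they expect the conjecture to hold by analogy with Nelson's reduction algorithm and similar conservativity results, and declare it future work.

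So there is nothing to compare your proposal against: the paper does not prove the statement, and neither do you --- your write-up is an honest survey of plausible lines of attack together with the obstacles blocking each one. Your reduction of the conjecture to plain internal conservativity is correct and matches what the paper implicitly has in mind. Your identification of the core obstruction (no term realizer for $\TPA^\Sh$, hence the need either for a cut-elimination-style permutation of $\TPA$ into $\TRA$, or for a model-theoretic saturation argument in which the hard step becomes $\HAC_\intern$) is sound and goes somewhat beyond the paper's brief discussion. But you should be clear that what you have written is a research programme, not a proof; if the assignment was to reproduce the paper's argument, the right answer is simply that the paper leaves this open.
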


Unfortunately, one cannot prove this by showing that one can strengthen the hypothesis of Theorem \ref{fullsoundnessSst} from
$\epaststar + \I + \HAC_\intern + \Delta_{\intern} \vdash \Phi(\tup a)$ to
$\epaststar + \I + \HAC_\intern + \TPA + \Delta_{\intern} \vdash \Phi(\tup a)$, for this strengthened version fails. To see why, note that the $\Sh$-interpretation of $\TPA$
\[ \forallst \tup t \, ( \, \forallst x \,  \varphi(x, \tup t) \to \forall x \, \varphi(x, \tup t) \, ) \]
is provably equivalent to
\[ \forallst \tup t \, \existsst y \, ( \, \varphi(y, \tup t) \to \forall x \, \varphi(x, \tup t) \, ). \]
Therefore such a strengthened version of Theorem \ref{fullsoundnessSst} would imply that for any formula in the language of $\epastar$ without parameters $\varphi(x)$ there are terms $t_1, \ldots, t_n$ such that
\[ \epastar \vdash \bigwedge_i \varphi(t_i) \to \forall x \, \varphi(x). \]
To refute this general statement, it suffices to consider a quantifier-free formula $\varphi(x)$ such that $\forall x \, \varphi(x)$ is true, but  not provable in $\epa$ (such as ``$x$ is not the G\"odel number of a proof in $\epa$ of $\bot$''). This last argument does not refute the conjecture, for the statement
\[ \forall \tup t \, \exists y \, ( \, \varphi(y, \tup t) \to \forall x \, \varphi(x, \tup t) \, ) \]
is a tautology and hence provable in $\epastar$.

Still, we expect that adding $\TPA$ to $\epaststar + \I + \HAC_\intern$ does not destroy conservativity over $\epastar$, because transfer is part of many similar nonstandard systems that have been shown to be conservative over classical base theories (see \cite{palmgren00} and \cite{nelson88}). One natural way to attack this problem would be to try to prove Conjecture \ref{consconj} along the lines of \cite{nelson88}. We plan to take up these issues in future work.

\section{Conclusion and plans for future work}

We hope this paper lays the groundwork for future uses of functional interpretations to analyse nonstandard arguments and systems. There are many directions, both theoretical and applied, in which one could further develop this research topic. We conclude this paper by mentioning a few possibilities which we would like to take up in future research.

First of all, we would like to see if the interpretations that we have developed in this paper could be used to ``unwind'' or ``proof-mine'' nonstandard arguments. Nonstandard arguments have been used in areas where proof-mining techniques have also been successful, such as metric fixed point theory (for methods of nonstandard analysis applied to metric fixed point theory, see \cite{aksoykhamsi90, kirk03}; for application of proof-mining to metric fixed point theory, see \cite{briseid09, gerhardy06, kohlenbach05, kohlenbachleustean03, kohlenbachleustean10, leustean07}) and ergodic theory (for a nonstandard proof of an ergodic theorem, see \cite{kamae82}; for applications of proof-mining to ergodic theory, see \cite{avigad09, avigadgerhardytowsner10, gerhardy08, gerhardy10, kohlenbach11, kohlenbachleustean09, safarik11}), therefore this looks quite promising. For the former type of applications to work in full generality, one would have to extend our functional interpretation to include types for abstract metric spaces, as in \cite{gerhardykohlenbach08, kohlenbach05b}.

But there are also a number of theoretical questions which still need to be answered. Several have been mentioned already: for example, mapping the precise relationships between the nonstandard principles that we have introduced. Another question was whether $\epaststar + \I + \HAC_\intern + \TPA$ is conservative over $\epastar$. Another question is whether our methods allow one to prove conservativity results over $\weha$ and $\wepa$ as well: this will be important if one wishes to combine the results presented here with the proof-mining techniques from \cite{Kohlenbach08}. 

In addition, we would also like to understand the use of saturation principles in nonstandard arguments. These are of particular interest for two reasons: first, they are used in the construction of Loeb measures, which belong to one of the most successful nonstandard techniques. Secondly, for certain systems it has turned out that extending them with saturation principles has resulted in an increase in proof-theoretic strength (see \cite{hensonkeisler86, keisler07}). 

The general saturation principle is
\[ \SAT: \quad \forallst x^\sigma \, \exists y^\tau \, \Phi(x, y) \to \exists f^{\sigma \to \tau} \, \forallst x^\sigma \, \Phi(x, f(x)). \]
Whether this principle has a $\D$-interpretation within G\"odel's $\Tstar$, we do not know; but
\[ \CSAT: \quad \forallst n^0 \, \exists y^\tau \, \Phi(n, y) \to \exists f^{0 \to \tau} \, \forallst n^0 \, \Phi(n, f(n)) \]
has and that seems to be sufficient for the construction of Loeb measures. Interpreting $\CSAT$ and $\SAT$ in the classical context using the $\Sh$-interpretation is probably quite difficult and it is possible that they require some form of bar recursion. We hope to be able to clarify this in future work.

\bibliographystyle{plain} \bibliography{dst}

\end{document}